\documentclass[10pt]{amsart}
\usepackage{stmaryrd}
\usepackage{tikz}
\usetikzlibrary{arrows,positioning,shapes,matrix,backgrounds}
\usepackage{float}
\usepackage{amssymb}
\usepackage{amsthm}
\usepackage{enumerate}
\usepackage{url}
\usepackage{amsmath}
\usepackage{graphicx}
\usepackage{textcomp}
\usepackage{marvosym}
\usepackage{bbm}
\usepackage{fancybox}
\usepackage{fancyheadings}
\usepackage[section]{placeins}
\usepackage{setspace}
\newcommand{\dsp}{\omega\times 2^\omega}

\newcommand{\gA}{\mathfrak{A}}
\newcommand{\ind}{\mathrm{Ind}}

\newcommand{\gf}{\mathfrak{f}}

\newcommand{\gC}{\mathfrak{C}}
\newcommand{\gB}{\mathfrak{B}}

\newcommand{\g}[1]{\mathfrak{#1}}

\newcommand{\zfc}{\mathrm{ZFC}}
\newcommand{\meag}{\mbox{\tt meagre}}
\newcommand{\pathh}{\mbox{\tt path}}
\newcommand{\nulli}{\mbox{\tt null}}

\newcommand{\func}[2]{{^{#1}}#2}

\newcommand{\scr}{\mathbb}

\newcommand{\bld}{\textbf}

\newcommand{\cT}{{\mathcal T}}

\newcommand{\cD}{{\mathcal D}}
\newcommand{\cE}{{\mathcal E}}
\newcommand{\cA}{{\mathcal A}}
\newcommand{\cI}{{\mathcal I}}

\newcommand{\cV}{{\mathcal V}}
\newcommand{\cP}{{\mathcal P}}
\newcommand{\cM}{{\mathcal M}}

\newcommand{\cC}{{\mathcal C}}

\newcommand{\cU}{{\mathcal U}}
\newcommand{\bc}{\mathrm{BC}}

\newcommand{\cB}{{\mathcal B}}

\newcommand{\Ra}{\rangle}
\newcommand{\force}{\Vdash}
\newcommand{\La}{\langle}

\newcommand{\eb}[1]{\emph{\bld{#1}}}

\newcommand{\fr}[1]{\mathrm{Fr} \, {#1}}
\newcommand{\sfr}[1]{\mathrm{Fr}^*  {#1}}

\newtheorem{thm}{Theorem}[section]

\newtheorem{df}[thm]{Definition}

\newtheorem{ineq}{Inequality}


\newtheorem{exmm}[thm]{Example}

\newtheorem{fact}[thm]{Fact}

\newtheorem{rem}[thm]{Remark}
\newtheorem{lem}[thm]{Lemma}
\newtheorem{prop}[thm]{Proposition}
\numberwithin{equation}{section}

\setlength{\parindent}{0in}
\long\def\symbolfootnote[#1]#2{\begingroup%
\def\thefootnote{\fnsymbol{footnote}}\footnote[#1]{#2}\endgroup} 
\def\@fnsymbol#1{\ifcase#1\hbox{}\or *\or \dagger\or \ddagger\or \mathchar "278\or \mathchar "27B\or \|\or **\or \dagger\dagger \or \Ankh \else\@ctrerr\fi\relax} 
\tikzset{
  every node/.style={scale=1}
}
\tikzset{
    >=latex,
    punkt/.style={
           rectangle,
           rounded corners,
           draw=black, very thick,
           text width=10em,
           minimum height=2em,
           text centered},
    pil/.style={
           ->,
           ultra thick,
           shorten <=10pt,
           shorten >=10pt,}
}
\tikzset{
  every node/.style={scale=1}
}


\usepackage{t1enc}
\usepackage[latin2]{inputenc}
\usepackage{amsmath}
\usepackage{amsthm}
\usepackage{amssymb}
\usepackage[T1]{fontenc}
\usepackage{enumerate}
\usepackage{verbatim}
\usepackage{amscd}
\usepackage{color}

\usepackage[colorlinks]{hyperref}
\usepackage[displaymath, tightpage]{preview}
\usepackage[colorinlistoftodos, textwidth=4cm, shadow]{todonotes}

\begin{document}

\title{On submeasures on Boolean algebras}
\thanks{Date: 31/12/2012}
\subjclass[2010]{03E75, 28A60} 
\thanks{The results presented here appear in the author's doctoral thesis \cite{phd1}.}
\author[Omar Selim]{Omar Selim}
\email{oselim.mth@gmail.com}
\begin{abstract} 
We present a collection of observations and results concerning submeasures on Boolean algebras. They are all motivated by Maharam's problem and Talagrand's construction that solved it.
\end{abstract}
\maketitle
\tableofcontents
\section{Introduction}\label{introduction}
\noindent
Given a Boolean algebra $\gA$, a function $\mu:\gA\rightarrow \scr{R}$ is called a \emph{submeasure} if the following conditions are satisfied:
\begin{itemize}
\item $\mu(0) = 0$;
\item $(\forall a,b\in \gA)(a\leq b \rightarrow \mu(a)\leq \mu(b));$
\item $(\forall a,b\in \gA)(\mu(a\cup b)\leq \mu(a)+\mu(b))$.
\end{itemize}
A submeasure is \emph{additive} if, given disjoint $a,b\in \gA$, we have $\mu(a\cup b) = \mu(a)+\mu(b)$. Additive submeasures are called \emph{measures}. Two submeasures $\mu$ and $\lambda$ are \emph{equivalant} if, for any sequence $(a_n)_n$ from $\gA$, we have 
$$
\lim_n\mu(a_n) = 0 \leftrightarrow \lim_n \lambda(a_n) = 0.
$$
A submeasure $\mu$ on $\gA$ is called \emph{exhaustive} if, given a pairwise disjoint sequence $(a_n)_n$ from $\gA$, we have $\lim_n\mu(a_n) = 0$. Maharam's problem, also known as the control measure problem, asks if every exhaustive submeasure is equivalent to a measure. This problem was first asked in \cite{maharam47} (but not in this form) and in \cite{tal08}, building on \cite{roberts91} and \cite{farah04}, M. Talagrand constructs an exhaustive submeasure on the clopen (closed and open) sets of the Cantor space that is not equivalent to a measure. Maharam's problem and the numerous forms in which it appears are, by now, well documented. A detailed treatment of this topic is given in \cite[Chapter 39]{fremv3}, and a very accessible survey, which also discusses a related problem of von Neumann, is given in \cite{balcar06}.\\\\
A frustrating aspect of Maharam's problem is the complexity of its solution. It seems, at least in the literature, that very little progress has been made (or perhaps, attempted) in trying to analyse Talagrand's construction. The only discussion (other than \cite{tal08}) on Talagrand's solution that we are aware of is \cite{frem08}. Consequently it is still not clear how much more insight we have into Maharam's problem now that it has been settled, and in particular why it was so difficult. What follows is a result of trying to understand Maharam's problem, its solution, and also some of the open questions that concern this solution.\\\\
This article may be summarised as follows. In Section \ref{oca} we discuss how one can generalise the main result of \cite{dow00} to show that, under Todorcevic's Open Colouring Axiom (OCA), the Boolean algebra $\cP(\omega)/\mathrm{Fin}$ does not contain a Maharam algebra as a subalgebra. This was motivated by the problem of whether or not Talagrand's construction of a non-measurable Maharam algebra contains a regular subalgebra isomorphic to the random algebra (see \cite[Question 12]{farah07}, \cite[Problem 3A]{frem08}, \cite[Question 3]{velickovic09}).\\\\
In Section \ref{forcingsubs} we consider the collection of submeasures $\mu:\gA\rightarrow \scr{Q}$, where $\gA$ is some finite subalgebra of the collection of clopen sets of the Cantor space, partially ordered by reverse inclusion. We show that any generic filter for this forcing gives rise to a submeasure that is not equivalent to a measure, but is exhaustive with respect to the antichains that appear in the ground model. This line of investigation is motivated by the fact that Maharam's problem is equivalent to a $\Pi^1_2$-statement and is thus absolute for forcing extensions (see \cite{balcar06}). In particular if one could force the existence of an exhaustive submeasure that is not equivalent to a measure, then the existence of such a submeasure follows from $\zfc$.\\\\
In Section \ref{forcingideal} we investigate some basic properties of the forcing associated to Talagrand's construction. We show that in any such forcing extension, the collection of random reals will have $\nu$-measure $0$, where $\nu$ is Talagrand's submeasure. We also give a proof that, in any such extension, the collection of ground model reals will be Lebesgue null and meagre.\\\\
In Section \ref{calcs} we consider the first pathological submeasure $\psi$ constructed by Talagrand in \cite{tal08}. We give explicit values for the entire space and for relative atoms. The motivation here is that the values of the Lebesgue measure on the Cantor space are indeed easily calculable, and it would be very helpful if the same could be said for Talagrand's submeasure.\\\\
In Section \ref{measures} we find a linear association between the space of all functionals on the clopen sets of the Cantor space and the collection of all signed measures on this space. We give an explicit construction of this map. This result may be interesting in itself, but perhaps it may also shed some light on the theory of submeasures by allowing us to consider the classical theory of measures.
\section{Preliminaries}\label{prels}
We gather here the background material that we shall need in what follows. We have tried to make our notation and terminology as standard as possible. Unless otherwise stated our set theory follows \cite{kunen80} and in particular `$p\leq q$' is taken to mean that `$p$ is stronger than $q$'. Everything concerning Boolean algebras follows \cite{hbba1}. By $\scr{A}$ we denote the countable atomless Boolean algebra. If $\gA$ is a Boolean algebra and $A\subseteq \gA$, then we let $\La A \Ra$ be the smallest subalgebra of $\gA$ containing $A$. If $a\in \gA$ then by $\gA_a$ we denote the Boolean algebra $\{b\in \gA:b\leq a\}$ (with unit $a$). We let $\scr{N} = \{1,2,...\}$ and $\omega = \{0,1,2,...\}$. If $n\in \scr{N}$ then we let $[n] = \{1,2,...,n\}$. Given sets $(X_i)_{i\in \scr{N}}$, $I\subseteq \scr{N}$ and $s\in \prod_{i\in I} X_i$, we let 
$$
[s] = \{f\in \prod_{i\in \scr{N}}X_i: (\forall i\in I)(f(i) = s(i))\}.
$$
Given a topological space $K$, we let $\mathrm{Clopen}(K)$ and $\mathrm{Borel}(K)$ be the collections of clopen sets and Borel sets of $K$, respectively.\\\\
Unless otherwise stated, everything concerning submeasures may be found in \cite{fremv3}. A \emph{functional} on a Boolean algebra $\gA$ is any real-valued function defined on $\gA$ that assumes the value $0$ at $0_\gA$. Given a submeasure $\mu$ on a Boolean algebra $\gA$, we say that $\mu$ is \emph{strictly positive} if $(\forall a\in \gA)(\mu(a) = 0 \rightarrow a = 0)$. The submeasure $\mu$ is called \emph{diffuse} if, for every $\epsilon > 0 $ we can find a finite partition $A$ of $\gA$, such that $\max\{\mu(a):a\in A\}<\epsilon$. The submeasure $\mu$ is called \emph{uniformly exhaustive} if, for every $\epsilon >0$, we can find an $N\in\scr{N}$ such that for any antichain $a_1,...,a_N$ from $\gA$ we have $\min_i \mu(a_i) < \epsilon$. The submeasure $\mu$ is called \emph{pathological} if there does not exist a non-zero measure $\lambda$ on $\gA$ such that $\lambda\leq \mu$, where by $\lambda\leq \mu$ we mean $(\forall a\in \gA)(\lambda(a)\leq \mu(a))$. The well-known Kalton-Roberts theorem reads as follows.
\begin{thm}\emph{(\cite{kalton83})} A submeasure is uniformly exhaustive if and only if it is equivalent to a measure.
\end{thm}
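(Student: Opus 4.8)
The plan is to prove the two implications separately; the first is routine and the second carries essentially all the content.

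\emph{If $\mu$ is equivalent to a measure then $\mu$ is uniformly exhaustive.} Any measure $\lambda$ is itself uniformly exhaustive: for an antichain $a_1,\dots,a_N$ one has $\sum_{i\le N}\lambda(a_i)\le\lambda(1_\gA)$, so $\min_i\lambda(a_i)\le\lambda(1_\gA)/N$, and any $N>\lambda(1_\gA)/\epsilon$ witnesses the definition. Now suppose $\mu$ is equivalent to a measure $\lambda$ but, towards a contradiction, $\mu$ is not uniformly exhaustive: fix $\epsilon>0$ such that for every $N$ there is an antichain $a^N_1,\dots,a^N_N$ with $\mu(a^N_i)\ge\epsilon$ for all $i\le N$. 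For each $k$, at most $k\lambda(1_\gA)$ of the $a^N_i$ can satisfy $\lambda(a^N_i)\ge1/k$ (since $\sum_{i\le N}\lambda(a^N_i)\le\lambda(1_\gA)$), so taking $N>k\lambda(1_\gA)$ produces some $b_k\in\{a^N_1,\dots,a^N_N\}$ with $\lambda(b_k)<1/k$. Then $\lambda(b_k)\to0$ while $\mu(b_k)\ge\epsilon$ for all $k$, contradicting the equivalence of $\mu$ and $\lambda$.

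\emph{If $\mu$ is uniformly exhaustive then $\mu$ is equivalent to a measure.} The case $\mu(1_\gA)=0$ being trivial, rescale so that $\mu(1_\gA)=1$, and let $\epsilon\mapsto N_\mu(\epsilon)$ witness uniform exhaustivity. The goal is, for each $n\in\scr{N}$, a measure $\lambda_n$ on $\gA$ and a constant $\theta_n>0$, depending only on $n$ and on $N_\mu(\cdot)$, with $\lambda_n\le\mu$ and $\lambda_n(a)\ge\theta_n$ whenever $\mu(a)\ge1/n$. Granting this, $\lambda:=\sum_{n\in\scr{N}}2^{-n}\lambda_n$ is a measure with $\lambda\le\mu$ (since $\sum_{n\in\scr{N}}2^{-n}=1$), so $\mu(a_k)\to0$ forces $\lambda(a_k)\to0$; and if $\mu(a_k)\ge1/n$ for infinitely many $k$ then $\lambda(a_k)\ge2^{-n}\theta_n$ for those $k$, so $\lambda(a_k)\to0$ forces $\mu(a_k)\to0$. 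Hence $\lambda$ is equivalent to $\mu$. To obtain $\lambda_n$ I would work over the directed family of finite subalgebras $\gB\le\gA$: if every such $\gB$ carries a measure $\nu_\gB\le\mu\restriction\gB$ with $\nu_\gB(a)\ge\theta_n$ for all $a\in\gB$ with $\mu(a)\ge1/n$ --- crucially with the \emph{same} $\theta_n$ for all $\gB$ --- then an ultrafilter limit of the $\nu_\gB$ over this directed set yields $\lambda_n$ on $\gA$.

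This reduces the theorem to a finitary statement: for each $n$ there is $\theta_n>0$, depending only on $n$ and $N_\mu(\cdot)$, such that every finite subalgebra $\gB\le\gA$ admits a measure dominated by $\mu\restriction\gB$ that is at least $\theta_n$ on each $a\in\gB$ with $\mu(a)\ge1/n$. Listing the atoms of $\gB$ as $c_1,\dots,c_N$, a measure dominated by $\mu\restriction\gB$ is precisely a vector $w\in\scr{R}^N_{\ge 0}$ with $\sum_{c_i\le a}w_i\le\mu(a)$ for all $a\in\gB$, so the assertion is that such a $w$ can in addition be chosen with $\sum_{c_i\le a}w_i\ge\theta_n$ for every heavy $a$ (meaning $\mu(a)\ge1/n$): a linear feasibility problem. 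By linear-programming duality, infeasibility for a given $\theta_n$ furnishes a probability distribution $z$ on the heavy sets together with a nonnegative family $(y_a)_{a\in\gB}$ such that $\sum_{a\ge c_i}y_a\ge\sum_{a\ge c_i}z_a$ for every atom $c_i$ while $\sum_{a\in\gB}y_a\mu(a)<\theta_n$: a cheap fractional cover of the mass carried by the heavy sets.

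The crux --- and the step I expect to be the real obstacle --- is a combinatorial lemma of Kalton and Roberts to the effect that such a cheap fractional cover of a mass of heavy sets can always be converted into a genuine antichain, all of whose members remain heavy (with threshold reduced by a controlled amount) and whose length grows without bound as the cost $\theta_n$ shrinks. Fixing $\theta_n$ small enough that this length exceeds the relevant value of $N_\mu$ then contradicts the uniform exhaustivity of $\mu$, so the linear programme is in fact feasible; its solutions provide the finite measures with the uniform lower bound $\theta_n$, and assembling them as above completes the proof. (The finite step can alternatively be routed through Kelley's intersection-number criterion for the existence of a measure --- uniform exhaustivity forces the family of heavy sets to have intersection number bounded below by a constant depending only on $n$ and $N_\mu(\cdot)$ --- but the passage from small intersection number to a long antichain of heavy sets is again precisely the Kalton--Roberts disjointification phenomenon.)
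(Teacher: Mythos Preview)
The paper does not give a proof of this statement at all: it is recorded in the preliminaries as the ``well-known Kalton--Roberts theorem'' with a citation to \cite{kalton83}, and is used only as a black box. There is therefore no paper-proof to compare your proposal against.

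As to the proposal itself: your first implication is complete and correct. For the second, your reduction to a finitary linear-programming statement on finite subalgebras, followed by an ultrafilter (or weak-$*$) limit, is exactly the standard architecture of the Kalton--Roberts argument. You are also right that the entire difficulty sits in the step you flag as ``the crux'': converting a cheap fractional cover of the heavy sets into a long antichain of (slightly less) heavy sets. However, you have not actually proved that step --- you have only described what the lemma should assert and attributed it to Kalton and Roberts. In that sense your write-up is an accurate roadmap rather than a proof: everything you do prove is correct, but the theorem does not follow until the combinatorial disjointification lemma (with its explicit quantitative bound linking the cover cost to the antichain length) is established. If you want a self-contained argument you will need to supply that lemma; the usual route is a concentration/averaging argument over random subselections of the fractional cover, which is precisely the content of \cite{kalton83}.
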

If $\gA$ is $\sigma$-complete then $\mu$ is called \emph{continuous} if for each sequence $a_1\geq a_2\geq \cdots$ from $\gA$ such that $\prod_i a_i = 0$ we have $\lim_i \mu(a_i) = 0$. It follows that if $\mu$ is continuous and $(a_i)_{i}$ is a sequence from $\gA$ such that 
$$
a:= \limsup_i a_i = \liminf_i a_i
$$ 
then $\lim_i\mu(a_i) = \mu(a)$. An atomless $\sigma$-complete Boolean algebra that carries a strictly positive continuous submeasure is called a \emph{Maharam algebra}. Every Maharam algebra satisfies the countable chain condition (ccc) and no such algebra can add a Cohen real (see \cite[Theorem 5.9]{balcar06}). If $\gA$ is $\sigma$-complete then $\mu$ is called \emph{$\sigma$-additive} if for every antichain $(a_i)_i$ from $\gA$ we have 
$$
\mu(\sum_i a_i) = \sum_i \mu(a_i).
$$
An atomless $\sigma$-complete Boolean algebra that carries a strictly positive $\sigma$-additive measure is called a \emph{measure algebra}. The \emph{random algebra} is the unique measure algebra that is $\sigma$-generated by a countable set. The \emph{Cohen algebra} is the unique atomless $\sigma$-complete Boolean algebra with a countable dense subset.\\\\
Since $\sigma$-additivity implies continuity, every measure algebra is a Maharam algebra. The different formulations of Maharam's problem that we shall need are as follows.
\begin{fact}\emph{(\cite[\S 393]{fremv3})} The following statements are equivalant.
\begin{itemize}
\item Every Maharam algebra is a measure algebra.
\item Every exhaustive submeasure is equivalent to a measure.
\item Every exhaustive submeasure is uniformly exhaustive.
\item Every exhaustive submeasure is not pathological.
\end{itemize}
\end{fact}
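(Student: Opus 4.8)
\emph{Proof proposal.} Label the four displayed statements $(1)$--$(4)$, in order. The equivalence $(2)\Leftrightarrow(3)$ is precisely the Kalton--Roberts theorem quoted above, so the plan is to close the circle by establishing $(1)\Rightarrow(2)$, $(3)\Rightarrow(4)$ and $(4)\Rightarrow(1)$. Throughout I set aside the trivial submeasure $\mu\equiv 0$, and otherwise reduce at once to \emph{strictly positive} exhaustive submeasures: pass from $\gA$ to the quotient by the ideal $\{a\in\gA:\mu(a)=0\}$, which is again exhaustive, carries the induced strictly positive submeasure, and relative to which all four properties transfer back and forth. (In particular a strictly positive exhaustive submeasure forces $\gA$ to be ccc.)

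The heart of the matter is $(1)\Rightarrow(2)$, and the plan there is to pass to the metric completion. For an exhaustive submeasure $\mu$ on $\gA$, the formula $d(a,b)=\mu(a\triangle b)$ is a pseudometric; collapsing its null ideal and completing yields --- here exhaustivity is used --- a $\sigma$-complete Boolean algebra $\wh{\gA}$ carrying a strictly positive \emph{continuous} submeasure $\wh\mu$ that extends $\mu$ and satisfies $\wh\mu(a\triangle b)=\wh d(a,b)$, and a routine argument shows $\mu$ is equivalent to a measure iff $\wh\mu$ is. Split $\wh{\gA}$ into its atomic part $C$ and its atomless part $D$. On $C$, continuity of $\wh\mu$ makes the submeasures of the tails $\{t_k,t_{k+1},\dots\}$ of the atoms tend to $0$, and a short ``squeeze'' --- a sequence small for one strictly positive measure eventually lies below a co-small element, hence is small for any other --- shows $\wh\mu\restriction C$ is equivalent to the finite measure giving the $k$-th atom weight $2^{-k}$. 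On $D$, which (being atomless, $\sigma$-complete and carrying a strictly positive continuous submeasure) is a Maharam algebra, hence a measure algebra by $(1)$: essentially the same squeeze --- now using continuity of $\wh\mu$ together with continuity and strict positivity of the measure on $D$ --- shows that on \emph{any} measure algebra every strictly positive continuous submeasure is equivalent to the measure. Gluing the two parts, $\wh\mu$, and hence $\mu$, is equivalent to a measure.

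The other implications are comparatively soft. The converse of $(1)\Leftrightarrow(2)$ is worth noting directly: if $B$ is a Maharam algebra with strictly positive continuous $\nu$ then $\nu$ is exhaustive, so by $(2)$ it is equivalent to a measure $m$; equivalence tested on constant sequences gives $m$ strictly positive, tested on a decreasing sequence with infimum $0$ gives $m$ continuous, and a continuous finitely additive measure on a $\sigma$-complete algebra is $\sigma$-additive, so $B$ is a measure algebra. For $(3)\Rightarrow(4)$: in its control-measure form, Kalton--Roberts gives that a uniformly exhaustive submeasure dominates a measure $\lambda$ (with $\mu$ in turn controlled by $\lambda$); as $\lambda\ne 0$ unless $\mu\equiv 0$, such a $\mu$ is not pathological. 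For $(4)\Rightarrow(1)$: given a Maharam algebra $B$ with strictly positive continuous $\nu$, each relative algebra $B_b$ ($b\ne 0$) carries the exhaustive submeasure $\nu\restriction B_b$, so by $(4)$ there is a non-zero measure $m_b\le\nu\restriction B_b$, which is continuous (being $\le\nu$) hence $\sigma$-additive; since Maharam algebras are ccc, a transfinite exhaustion --- at each stage discarding from the current element a non-zero piece on which the chosen measure is strictly positive --- terminates after countably many steps with the unit of $B$ covered by a disjoint family $(c_\xi)$ over whose relative algebras $\nu$ dominates strictly positive $\sigma$-additive measures $m_\xi$; then $m=\sum_\xi t_\xi\,m_\xi(\,\cdot\wedge c_\xi)$ with $\sum_\xi t_\xi=1$ is a strictly positive $\sigma$-additive measure on $B$, so $B$ is a measure algebra. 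This yields the cycle $(1)\Rightarrow(2)\Rightarrow(3)\Rightarrow(4)\Rightarrow(1)$, and the second half of Kalton--Roberts supplies $(3)\Rightarrow(2)$.

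The genuine obstacle, as I see it, is the structural input hidden in $(1)\Rightarrow(2)$ --- the claim that a measure algebra admits no ``exotic'' strictly positive continuous submeasure, every such submeasure being automatically equivalent to the measure --- together with the completion machinery and the atomic/atomless bookkeeping. By contrast the exhaustion in $(4)\Rightarrow(1)$ and the deduction $(3)\Rightarrow(4)$ are routine once one has Kalton--Roberts in its control-measure form (\cite{kalton83}) and the ccc-ness of Maharam algebras. Full details of all of this are in \cite[\S 393]{fremv3}.
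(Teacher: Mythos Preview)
The paper does not prove this statement at all: it is stated as a \emph{Fact} with a bare citation to \cite[\S 393]{fremv3}, and no argument is given in the paper itself. So there is no ``paper's own proof'' to compare against.

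Your sketch is a correct outline of the standard route through these equivalences, and indeed is essentially the argument one finds in Fremlin. The key structural point you isolate --- that on a $\sigma$-complete ccc algebra any two strictly positive continuous submeasures are equivalent (since $\mu(a_n)\to 0$ is characterised order-theoretically: every subsequence has a further subsequence with $\limsup = 0$) --- is exactly what makes $(1)\Rightarrow(2)$ go through once the metric completion is in hand. The exhaustion argument for $(4)\Rightarrow(1)$ and the appeal to the control-measure form of Kalton--Roberts for $(3)\Rightarrow(4)$ are also the standard moves. Since the paper only quotes the result, your proposal in fact supplies strictly more than the paper does here.
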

Finally, although our notation follows quite closely to that of \cite{tal08}, for completeness we present Talagrand's example of an exhaustive pathological submeasure that is not uniformly exhaustive. For the remainder of this section, everything is taken from \cite{tal08}. Let \index{$\cT$}
$$
\cT = \prod_{n\in\scr{N}}[2^n].
$$
We also fix\index{$\scr{T}$}
$$
\scr{T} = \mathrm{Clopen}(\cT).
$$
For each $n\in\scr{N}$, let $\cA_n = \{[f\restriction [n]]:f\in \cT\}$ and $\cB_n$\index{$\cB_n$} be the subalgebra of $\scr{T}$ generated by $\cA_n$\index{$\cA_n$}. Members of $\cB_n$ will be finite unions of sets of the form $[s]$, for $s\in \prod_{k\in[n]}[2^k]$.\\\\
Let
$$
\cM = \scr{T} \times [\scr{N}]^{<\omega}\times \scr{R}_{\geq 0}.
$$
For finite $X\subseteq \cM$, where $X = \{(X_1,I_1,w_1),...,(X_n,I_n,w_n)\}$,
let
$$
w(\emptyset) = 0, \ \ \ w(X) = \sum_{i=1}^n w_i, \ \ \ \bigcup X = \bigcup_{i=1}^n X_i.
$$
The value $w(X)$ is called the \bld{weight}\index{weight of a subset of $\cD$} of $X$.\\\\
We have the following general construction.
\begin{df} \label{generalsub}If $Y\subseteq \cM$ and is such that there exists a finite $Y'\subseteq Y$ such that $\cT = \bigcup Y'$ then $Y$ defines a submeasure $\phi_Y$\index{$\phi_Y$} given by
$$
\phi_Y(B) = \inf\{w(Y'):Y'\subseteq Y \wedge \mbox{$Y'$ is finite} \wedge B\subseteq \bigcup Y'\}.
$$
\end{df}
For $k\in\scr{N}$ and $\tau\in [2^n]$ let \index{$S_{n,\tau}$}
$$
S_{n,\tau} = \{f\in \cT:f(n)\neq \tau\}.
$$
For $k\in\scr{N}$ let\index{$\eta(k)$|}\index{$\alpha(k)$}
$$
\eta(k) = 2^{2k+10}2^{(k+5)^4}(2^3+2^{k+5}2^{(k+4)^4}), \ \alpha(k) = (k+5)^{-3}
$$ 
and set
$$
\cD_k = \{(X,I,w)\in \cM: |I|\in [\eta(k)]\wedge w = 2^{-k}\left(\frac{\eta(k)}{|I|}\right)^{\alpha(k)} \wedge (\exists \tau\in \prod_{n\in I}[2^n])(X = \bigcap_{n\in I} S_{n,\tau(n)})\}.
$$
Let $\cD = \bigcup_{k\in\scr{N}} \cD_k$\index{$\psi$} and 
$$
\psi = \phi_\cD.
$$
An important property of $\psi$ is the following.
\begin{prop}\label{onpsi} Any non-trivial submeasure $\mu$ such that $\mu\leq \psi$ must be pathological and cannot be uniformly exhaustive.
\end{prop}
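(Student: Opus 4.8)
\emph{Proof proposal.} The plan is to reduce everything to one elementary fact: that $\psi$ itself is pathological. Granting it, the first assertion is immediate — if $0\neq\lambda\le\mu$ for a measure $\lambda$ then $\lambda\le\mu\le\psi$ is a non-zero measure dominated by $\psi$, a contradiction, so $\mu$ is pathological (and non-triviality of $\mu$ is not even needed for this half). For the second assertion I would argue by contradiction: if $\mu$ were uniformly exhaustive, then by the Kalton--Roberts theorem $\mu$ would be equivalent to a measure, and a non-trivial uniformly exhaustive submeasure dominates a non-zero measure (this is built into the proof of that theorem; see \cite{kalton83} and \cite{fremv3}). Such a measure would be $\le\mu\le\psi$, again contradicting the pathology of $\psi$. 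Note that ``not uniformly exhaustive'' does \emph{not} pass from $\psi$ down to submeasures dominated by it (the zero submeasure is uniformly exhaustive), which is exactly why non-triviality of $\mu$ is assumed and used in this half.

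The core, then, is to prove $\psi$ is pathological. Suppose $\lambda$ is a non-zero measure on $\scr{T}$ with $\lambda\le\psi$. Since $\lambda\neq 0$ and every member of $\scr{T}$ is a finite union of basic clopen sets, subadditivity of $\lambda$ gives a basic clopen set $[s]$, say $s\in\prod_{j\in[n_0]}[2^j]$, with $\delta:=\lambda([s])>0$. The idea is that, for \emph{every} $k\in\scr{N}$, an all-but-$\lambda$-small part of $[s]$ can be swallowed by a single member of $\cD_k$ whose weight is only $2^{-k}$. Fix $k$ and pick an index set $I=I_k\sub\{m\in\scr{N}:m>n_0\}$ with $|I|=\eta(k)$ and $\sum_{m\in I}2^{-m}<\tfrac12$ — for instance $\eta(k)$ consecutive integers beginning above $\max(n_0,2)$. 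For each $m\in I$ the $2^m$ clopen sets $[s]\cap\{f\in\cT:f(m)=j\}$ ($j\in[2^m]$) partition $[s]$, so by additivity of $\lambda$ there is $\tau(m)\in[2^m]$ with $\lambda([s]\cap\{f:f(m)=\tau(m)\})\le 2^{-m}$.

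Set $A=A_k=\bigcap_{m\in I}S_{m,\tau(m)}$, a clopen set. Then
$$
[s]\setminus A=\bigcup_{m\in I}\bigl([s]\cap\{f:f(m)=\tau(m)\}\bigr),
$$
so $\lambda([s]\setminus A)\le\sum_{m\in I}2^{-m}<\tfrac12$ and hence $\lambda(A)\ge\lambda([s]\cap A)=\delta-\lambda([s]\setminus A)>\delta/2$. At the same time $\tau=(\tau(m))_{m\in I}\in\prod_{m\in I}[2^m]$, $|I|=\eta(k)\in[\eta(k)]$, and $2^{-k}(\eta(k)/|I|)^{\alpha(k)}=2^{-k}$, so $(A,I,2^{-k})\in\cD_k\sub\cD$; thus the singleton family $\{(A,I,2^{-k})\}$ already witnesses $\psi(A)=\phi_\cD(A)\le 2^{-k}$ (directly from the definition of $\phi_\cD$). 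Combining, $\delta/2<\lambda(A)\le\psi(A)\le 2^{-k}$ for every $k\in\scr{N}$ — absurd, as $\delta>0$ is fixed. So $\psi$ admits no non-zero dominated measure, i.e.\ it is pathological.

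I expect the only genuinely non-elementary step to be the appeal to Kalton--Roberts in the ``not uniformly exhaustive'' half; everything else is bookkeeping. The heart of the argument, the pathology of $\psi$, is purely combinatorial and hinges on two points: the members of $\cD_k$ with $|I|=\eta(k)$ have weight exactly $2^{-k}$, and each $S_{m,\tau}$ is the complement of a single level-$m$ slice, so one such intersection absorbs all but $\lambda$-measure $<\tfrac12$ of any chosen basic clopen set. The one thing to be careful about is that $I$ must be taken strictly above the finitely many coordinates defining $[s]$, so that the discarded slices really are $\lambda$-cheap.
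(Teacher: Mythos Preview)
The paper does not supply its own proof of this proposition; it appears in the preliminaries as background taken from \cite{tal08}, so there is nothing in the paper to compare against. Your reduction to the pathology of $\psi$, together with the appeal to the strong (domination) form of Kalton--Roberts for the ``not uniformly exhaustive'' half, is the standard route and is correct.

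There is one computational slip in your pathology argument. Partitioning $[s]$ (which has $\lambda$-measure $\delta$) into $2^m$ pieces yields a piece of $\lambda$-measure at most $\delta\cdot 2^{-m}$, not $2^{-m}$. With that correction,
\[
\lambda([s]\setminus A)\le \delta\sum_{m\in I}2^{-m}<\frac{\delta}{2},
\]
and hence $\lambda([s]\cap A)=\delta-\lambda([s]\setminus A)>\delta/2$, which is what you need. As you wrote it, $\lambda([s]\setminus A)<\tfrac12$ does \emph{not} give $\delta-\lambda([s]\setminus A)>\delta/2$ unless $\delta>1$. The fix is cosmetic and your choice of $I$ already guarantees $\sum_{m\in I}2^{-m}<\tfrac12$, so nothing else changes.

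One small remark on the second half: the version of Kalton--Roberts recorded in the paper (Theorem~2.1) only asserts equivalence with a measure, which by itself does not produce a non-zero measure \emph{dominated} by $\mu$. You are right that the actual Kalton--Roberts theorem gives $\lambda\le\mu\le K\lambda$ for some constant $K$, and it is this stronger statement you are invoking; it would be worth making that explicit.
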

Thus it is enough to now construct a non-trivial exhaustive submeasure that lies below $\psi$.
\begin{df}\label{274} Let $\mu:\scr{T}\rightarrow \scr{R}$ be a submeasure
and let $m,n\in\scr{N}$.
\begin{enumerate}[\hspace{0.0cm} $\bullet$]
\item For each $s\in \prod_{i\in [m]}[2^i]$ we define the
map 
$$
\pi_{[s]}: \cT \rightarrow [s]
$$
by 
$$
(\pi_{[s]}(x))(i) = \left\{\begin{array}{cl} s(i) ,\quad&\mbox{if $i\in [m]$;}\\
x(i),\quad&\mbox{otherwise.}\end{array}\right.
$$
\item For $m<n$ we say a set $X\subseteq \cT$ is \emph{\textbf{$(m,n,\mu)$-thin}}\index{thin@$(m,n)$-thin} if and only if
$$
(\forall A\in\cA_m)(\exists B\in\cB_n)(B\subseteq A \wedge B\cap
X=\emptyset \wedge \mu(\pi_A^{-1}[B])\geq 1).
$$
For $I\subseteq \scr{N}$, we say that $X$ is \textbf{\emph{$(I,\mu)$-thin}}\index{thin@$(I,\mu)$-thin} if it is $(m,n,\mu)$-thin for each $m,n\in I$ with $m<n$.
\end{enumerate}
\end{df}
The rest of the construction proceeds by a downward induction. For $p\in\scr{N}$ let $\cE_{p,p} = \cC_{p,p} = \cD$ and $\psi_{p,p} = \phi_{\cC_{p,p}}$. Now for $k<p$, given $\cE_{k+1,p}$, $\cC_{k+1,p}$ and $\psi_{k+1,p}$, we let 
$$
\cE_{k,p} = \{(X,I,w)\in\cM:\mbox{$X$ is $(I,\psi_{k+1,p})$-thin, $|I|\in [\eta(k)]$ and $w=2^{-k}\left(\frac{\eta(k)}{|I|}\right)^{\alpha(k)}$}\},
$$
$\cC_{k,p} = \cC_{k+1,p}\cup \cE_{k,p}$ and $\psi_{k,p} = \phi_{\cC_{k,p}}$.\\\\
Next let $\cU$ be a non-principal ultrafilter on $\scr{N}$. For each $k\in \scr{N}$ let $\cE_{k}$ and $\cC_{k}$\index{$\cC_k$}\index{$\cE_k$} be subsets of $\cM$ defined by
$$
x\in \cE_{k} \leftrightarrow \{p:x\in \cC_{k,p}\}\in \cU,
$$
and $\cC_{k} = \cD\cup \bigcup_{l\geq k} \cE_{l}$.\\\\
Finally, let $\nu_k = \phi_{\cC_k}$. It is clear from Definition \ref{generalsub} that we have 
$$
\nu_1\leq \nu_2\leq \nu_3 \cdots \leq \psi.
$$
Now the submeasure $\nu_1$, which we shall denote by $\nu$\index{$\nu$} from here on, is the desired counter example to Maharam's problem. The fact that $\nu$ is non-trivial and exhaustive requires two separate arguments. Exhaustivity follows by showing that for each $k$ and antichain $(a_n)_{n}$ from $\scr{T}$ we have 
$$
\limsup_n \nu_k(a_n)\leq 2^{-k}.
$$
This last property is known as \emph{$2^{-k}$-exhaustivity}.
\section{OCA and Maharam algebras} \label{oca}
The main result of \cite{dow00} is the following.
\begin{thm}\label{maindowhart} Assuming OCA, the random algebra is not a subalgebra of $\cP(\omega)/\mathrm{Fin}$.
\end{thm}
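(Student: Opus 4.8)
The plan is to argue by contradiction, using Stone duality to turn a hypothetical embedding of the random algebra into a continuous map on $\omega^{*}=\beta\omega\setminus\omega$, and then to invoke OCA to force that map to be so simple that it is incompatible with the metric (hence measure-theoretic) structure of the random algebra.

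So suppose $e\colon\mathbb B\hookrightarrow\cP(\omega)/\mathrm{Fin}$ is an embedding, $\mathbb B$ the random algebra. Fix a countable atomless subalgebra $\scr A_{0}\leq\mathbb B$ that $\sigma$-generates $\mathbb B$ and carries the Haar measure $m$; identify $\mathrm{St}(\scr A_{0})$ with $2^{\omega}$ and write $a_{s}$ ($s\in 2^{<\omega}$) for the basic clopen sets, so that $m(a_{s})=2^{-|s|}$ and $\scr A_{0}$ is dense in $(\mathbb B,d)$, where $d(b,c)=m(b\triangle c)$. Choose a coherent family of lifts $A_{s}\in[\omega]^{\omega}$ of the elements $e(a_{s})$, with $A_{\langle\rangle}=\omega$ and $A_{s}=A_{s^{\frown}0}\sqcup A_{s^{\frown}1}$; dually, the assignment sending an ultrafilter to the branch it selects is a continuous surjection $\Phi\colon\omega^{*}\to 2^{\omega}$. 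Two remarks record how the \emph{full} embedding constrains this picture. First, since $\bigwedge_{n}a_{x\restriction n}=0$ in $\mathbb B$ for every $x\in 2^{\omega}$, the branches of the tree $\{A_{s}\}$ vanish, i.e.\ $\bigcap_{n}A_{x\restriction n}$ is finite for all $x$ (after shrinking the $A_{s}$ if the embedding is not already regular, a reduction I would want to check carefully). Second, $m$ transports to a strictly positive, finitely additive, exhaustive submeasure $\lambda$ on the subalgebra $e[\scr A_{0}]$ of $\mathrm{Clopen}(\omega^{*})$ with $\lambda(e(a_{s}))=2^{-|s|}$.

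The crux — and the step I expect to be the main obstacle — is to use OCA to show that $\Phi$ is \emph{trivial}: induced by a sequence $(z_{n})_{n\in\omega}$ in $2^{\omega}$ in the sense that $A_{s}=^{*}\{n:z_{n}\in a_{s}\}$ for every $s$ (equivalently, $\Phi$ extends to a continuous map $\beta\omega\to 2^{\omega}$). Here one would follow the machinery of \cite{dow00}: apply OCA to an open partition of $[2^{\omega}]^{2}$ (or of a suitable Polish space of candidate local selections) that records, for a pair of branches, the level at which they split together with the finite ``errors'' witnessing that the corresponding pieces of the $A_{s}$ are almost disjoint, arranged so that an uncountable $K_{0}$-homogeneous set produces an uncountable $\tfrac12$-separated subset of $(\mathbb B,d)$ — impossible by separability — while in the remaining case $2^{\omega}$ is covered by countably many $K_{1}$-homogeneous pieces on which the coherence of the lifts lets one read off, piece by piece, the limit points $z_{n}$ and hence the desired continuous extension of $\Phi$. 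Carrying this through, and in particular taming the gaps that arise because a mere subalgebra embedding need not be regular, is exactly the technical heart of the Dow--Hart argument.

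Granting triviality, the contradiction is measure-theoretic. If $\Phi$ is induced by $(z_{n})$, then by finite additivity and the $d$-density of $\scr A_{0}$ the value that $\lambda$ assigns to $e([B])$, for Borel $B\subseteq 2^{\omega}$, is controlled by $\{n:z_{n}\in B\}$; pushing the transported $\sigma$-additive measure $m$ through this description exhibits $m$ as a countable combination of point masses carried by the $z_{n}$, i.e.\ as a purely atomic measure. This is the point where the $\sigma$-additivity of $m$ (not merely its exhaustivity) is essential, together with the vanishing of the branches $\bigcap_{n}A_{x\restriction n}$. Since $m$ is atomless, this is absurd, so no such embedding $e$ exists.
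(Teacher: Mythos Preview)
The paper does not prove this theorem: it is quoted as the main result of \cite{dow00}, and the reader is referred there for the argument. The paper's only engagement with the \emph{content} of the Dow--Hart proof is to isolate an abstract hypothesis $\scr{C}(\gB)$ (Definition~\ref{cb}) --- an embeddability-plus-gap condition on the algebra $\gB$ --- under which, the paper asserts, the Dow--Hart machinery applies verbatim; it then verifies $\scr{C}(\gB)$ for any Maharam algebra (hence in particular for the random algebra). So there is, strictly speaking, no proof here for your sketch to match.

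That said, as a rendering of the Dow--Hart argument your sketch has real gaps. First, the ``vanishing branches'' step cannot be repaired by shrinking: a mere subalgebra embedding $e$ need not preserve the meet $\bigwedge_n a_{x\restriction n}=0$, and there is no general device for passing to a regular sub-embedding of $\cP(\omega)/\mathrm{Fin}$. It is precisely here that the condition $\scr{C}(\gB)$ earns its keep in the paper's framework --- it is a statement about $\gB$ (guaranteeing, inside $\gB$, that any countable family of chopped branches stays strictly below the fibre), not about $e$, and it is what survives the passage through an arbitrary embedding into $\cP(\omega)/\mathrm{Fin}$. Your outline does not identify this ingredient. Second, your endgame does not work as written: triviality of the lift on the countable dense $\scr{A}_0$, i.e.\ $A_s=^{*}\{n:z_n\in[s]\}$, does not let you ``push $m$ through'' to conclude atomicity, because the transported functional on $e[\scr{A}_0]\subseteq\cP(\omega)/\mathrm{Fin}$ is only finitely additive and $d$-density in $\mathbb B$ says nothing about convergence in $\cP(\omega)/\mathrm{Fin}$. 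The actual Dow--Hart contradiction runs through OCA gap combinatorics in $\cP(\omega)/\mathrm{Fin}$ (which is exactly where the consequence of $\scr{C}(\gB)$ is used), not through an atomicity argument for $m$.
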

In this section we show how Theorem \ref{maindowhart} remains true if we replace the random algebra by any Maharam algebra:
\begin{thm}\label{myoca} Assuming OCA, the Boolean algebra $\cP(\omega)/\mathrm{Fin}$ does not contain a Maharam algebra as a subalgebra.
\end{thm}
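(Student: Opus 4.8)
The plan is to follow the proof of Theorem \ref{maindowhart} in \cite{dow00} as closely as possible, isolating the one point at which that argument uses the additivity of Lebesgue measure on the random algebra and checking that a strictly positive continuous submeasure can be made to do the same job there.

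Suppose, towards a contradiction, that $\gB$ is a Maharam algebra and that $e\colon\gB\hookrightarrow\cP(\omega)/\mathrm{Fin}$ is an embedding of Boolean algebras; fix a strictly positive continuous submeasure $\mu$ on $\gB$ with $\mu(1_\gB)=1$. Two standing facts are worth recording. First, $\gB$ is ccc and weakly distributive, since it carries a strictly positive continuous submeasure. Second, $\mu$ is automatically exhaustive: if $(a_n)_n$ is an antichain in $\gB$ then, by $\sigma$-completeness and the distributive law for countable joins, $\bigvee_{k\ge n}a_k\downarrow 0$, so $\mu(a_n)\le\mu(\bigvee_{k\ge n}a_k)\to 0$ by continuity. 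Being atomless, $\sigma$-complete and nontrivial, $\gB$ has cardinality at least $2^{\aleph_0}$, so I would fix an $\aleph_1$-indexed subset $(b_\xi)_{\xi<\omega_1}$ of $\gB$; after a routine closing-off one may assume it is $\epsilon$-separated in the submeasure metric $\rho(a,b)=\mu(a\triangle b)$ for some fixed $\epsilon>0$ when $(\gB,\rho)$ is non-separable, and otherwise reproduce directly the uncountable family used in \cite{dow00}. For each $\xi$ choose $\til b_\xi\subseteq\omega$ representing $e(b_\xi)$; then every finite Boolean identity among the $b_\xi$ is reflected, modulo a finite error depending only on that identity, by the $\til b_\xi$.

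Next I would transfer, essentially verbatim, the colouring of $[\omega_1]^2$ from \cite{dow00}: colour a pair of indices $0$ if one of the finite initial segments $\til b_\xi\cap n$, $\til b_\eta\cap n$ already exhibits a discrepancy, of size exceeding a fixed threshold measured through $\mu$, between what $e$ predicts and what the lifts do. Because only finite initial segments are consulted, the $0$-coloured set is open in the separable metrizable topology that $(\til b_\xi)_\xi$ induces on $[\omega_1]^2$, so OCA applies. An uncountable $0$-homogeneous set yields, as in \cite{dow00}, an uncountable subfamily of the $b_\xi$ forming a configuration that a ccc algebra cannot contain (an uncountable antichain), a contradiction. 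If instead $\omega_1$ is covered by countably many $1$-homogeneous pieces, one piece $Y$ is uncountable, and on $\{b_\xi:\xi\in Y\}$ the lifts cohere tightly enough that, feeding this back through the continuity of $\mu$, the family $\{b_\xi:\xi\in Y\}$ is forced to be $\rho$-separable, hence countable --- again a contradiction.

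The real obstacle is the treatment of the $1$-homogeneous piece. In \cite{dow00} the analogous step is a Fubini/averaging argument --- from uncountably many sets of measure at least $\epsilon$ one extracts many pairs whose intersection has measure bounded below --- and it is the additivity of the measure that drives it. A submeasure admits no such averaging; this is exactly the phenomenon behind exhaustive submeasures that are not uniformly exhaustive, so the inequality cannot simply be quoted. The substitute I would use retains only what survives: the ccc, the exhaustivity of $\mu$ noted above, $\sigma$-completeness and weak distributivity together rule out the relevant configuration, but the single additivity estimate of \cite{dow00} must be replaced by a more delicate inductive bound, in the spirit of the $2^{-k}$-exhaustivity bookkeeping of Talagrand's construction. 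Making that bound run while keeping the colouring open is where the work of this section is concentrated; the remainder is a transcription of \cite{dow00}.
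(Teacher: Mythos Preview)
Your proposal has a genuine gap at precisely the point you flag yourself. You correctly sense that something in the Dow--Hart argument uses more than monotonicity, but you misidentify the place: you describe a Fubini/averaging step on the $1$-homogeneous side and then propose to replace it by ``a more delicate inductive bound, in the spirit of the $2^{-k}$-exhaustivity bookkeeping,'' without saying what that bound is or why the colouring stays open. As written this is not a proof but a promissory note, and the analogy with Talagrand's bookkeeping is not apt: that machinery produces pathological submeasures, it does not give lower bounds on intersections of large sets, which is what an averaging substitute would have to do.

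The paper's route avoids this difficulty entirely. Rather than reworking the OCA colouring, it isolates a single combinatorial property $\scr{C}(\gB)$ of a $\sigma$-complete algebra $\gB$: there is an embedding $\scr{F}\colon\mathrm{Clopen}(\omega\times 2^\omega)\to\gB$ such that for every $n$ and every sequence $f_0,f_1,\ldots\in 2^\omega$ one can choose $N_0,N_1,\ldots$ with $\sum_i\scr{F}(\{n\}\times[f_i\restriction N_i])<\scr{F}(\{n\}\times 2^\omega)$. The observation is that the proof in \cite{dow00} goes through verbatim for any $\gB$ satisfying $\scr{C}(\gB)$; no averaging is needed anywhere. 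What remains is to verify $\scr{C}(\gB)$ for a Maharam algebra, and this uses only that a strictly positive continuous submeasure on an atomless $\sigma$-complete algebra is diffuse: partition $1_\gB$ into pieces $a_n$, refine each $a_n$ by finite partitions of arbitrarily small $\mu$-diameter to generate a copy of the countable atomless algebra inside $\gB_{a_n}$, and use $\sigma$-subadditivity of $\mu$ to choose the $N_i$ so that $\sum_i\mu(\scr{F}(\{n\}\times[f_i\restriction N_i]))<\mu(\scr{F}(\{n\}\times 2^\omega))$. Strict positivity then gives the strict inequality in $\gB$. So the additivity in Dow--Hart is replaced not by a new combinatorial argument but simply by diffuseness plus $\sigma$-subadditivity, and the OCA portion is untouched.
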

\begin{df}\label{cb} Let $\gB$ be a $\sigma$-complete Boolean algebra. Say that $\scr{C}(\gB)$\index{$\scr{C}(\gB)$} holds if and only if there exists an embedding $\scr{F}:\mathrm{Clopen}(\omega\times {{^\omega}2}) \rightarrow \gB$\index{$F$} such that
\begin{equation*}
(\forall n\in\omega)(\forall f_1,f_2,... \in {{^\omega}2})(\exists N_1, N_2,...\in\omega)(\sum_{i\in\omega} \scr{F}(\{n\}\times [f_i\restriction N_i])< \scr{F}(\{n\}\times 2^\omega)).
\end{equation*}
\end{df}
Here we work in the product space $\dsp$ corresponding to the discrete topology on $\omega$. The passage from Theorem \ref{maindowhart} to Theorem \ref{myoca} relies on the observation that the proof of Theorem \ref{maindowhart} from \cite{dow00} still works if one replaces the random algebra by any $\sigma$-complete Boolean algebra $\gB$ such that $\scr{C}(\gB)$ holds. Since the arguments needed under these more general conditions are (verbatim) the same as the original ones, we refer the reader to \cite{dow00}.\\\\
What is left to do here then is to show that $\scr{C}(\gB)$ holds for any Maharam algebra $\gB$, and this is a straightforward consequence of continuity and the following fact.
\begin{fact}\label{diffusefact}\eb{(\cite[392Xg]{fremv3})} If $\gB$ is an atomless $\sigma$-complete Boolean algebra then any strictly positive continuous submeasure on $\gB$ will be diffuse.
\end{fact}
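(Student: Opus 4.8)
The plan is to exploit the fact that a continuous submeasure on a $\sigma$-complete algebra is automatically exhaustive, and then to build the required finite partition by a greedy exhaustion argument rather than by attempting to maximise $\mu$. First I would record the passage from continuity to exhaustivity. Given a pairwise disjoint sequence $(c_i)_i$ in $\gB$, the tails $b_n = \sum_{i\geq n} c_i$ (which exist by $\sigma$-completeness) form a decreasing sequence with $\prod_n b_n = 0$; this last identity is a purely Boolean computation from disjointness together with the distributive law $d\cap\sum_i c_i = \sum_i(d\cap c_i)$, valid in any $\sigma$-complete algebra. Continuity then gives $\mu(b_n)\to 0$, whence $\mu(c_n)\leq\mu(b_n)\to 0$.

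Second, I would establish the local statement that $\inf\{\mu(b):0\neq b\leq a\}=0$ for every nonzero $a$. Were this infimum some $\delta>0$, then repeatedly using atomlessness to split off nonzero pieces would produce a pairwise disjoint sequence of nonzero elements below $a$, each of $\mu$-value at least $\delta$, contradicting exhaustivity. Consequently every nonzero element has, for each $\epsilon>0$, a nonzero subelement of submeasure $<\epsilon$.

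Now the construction. Fix $\epsilon>0$, set $c_0=1$, and recurse: while $c_{n-1}\neq 0$ put $r_n=\sup\{\mu(b):b\leq c_{n-1},\ \mu(b)<\epsilon\}$ (which is positive by the local step), choose $b_n\leq c_{n-1}$ with $\mu(b_n)<\epsilon$ and $\mu(b_n)>r_n/2$, and set $c_n=c_{n-1}\setminus b_n$. If some $c_m=0$ the recursion halts and $b_1,\dots,b_m$ already partition unity; otherwise it runs through all $n$. The $b_n$ are pairwise disjoint, so $\mu(b_n)\to 0$ by exhaustivity, forcing $r_n\to 0$. For $c_\infty=\prod_n c_n = 1\setminus\sum_n b_n$: if $c_\infty\neq 0$ it would contain a nonzero $b$ with $\mu(b)<\epsilon$ by the local step, and since $b\leq c_{n-1}$ for every $n$ we would get $\mu(b)\leq r_n\to 0$, contradicting strict positivity. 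Hence $c_n\downarrow 0$, so $\mu(c_n)\to 0$ by continuity, and choosing $N$ with $\mu(c_N)<\epsilon$ yields the finite partition $\{b_1,\dots,b_N,c_N\}$ of unity with every piece of submeasure $<\epsilon$.

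The one genuine obstacle is that $\mu$ is merely subadditive: the tempting route of taking a $\mu$-maximal element that admits a fine partition and then adjoining a leftover small piece breaks down, since adjoining a disjoint nonzero piece need not raise $\mu$ at all. The greedy scheme above sidesteps this by monitoring the quantities $r_n$ and invoking exhaustivity, so that the non-additivity of $\mu$ is never confronted directly; the only places continuity is used are the derivation of exhaustivity and the final truncation $\mu(c_N)<\epsilon$.
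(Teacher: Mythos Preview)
Your argument is correct. Note, however, that the paper does not actually prove this statement: it is recorded as a \emph{Fact} with a bare citation to Fremlin's treatise, so there is no in-paper proof against which to compare your approach. Your greedy-exhaustion scheme---deriving exhaustivity from continuity, establishing that every nonzero element contains nonzero pieces of arbitrarily small submeasure, and then iteratively peeling off near-maximal small pieces so that the residues $c_n$ decrease to $0$---is a standard and clean route to the result, and your remark about why a na\"ive maximality argument breaks down for genuinely subadditive $\mu$ is well taken.
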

\begin{prop} $\scr{C}(\gB)$ holds for every (atomless) Maharam algebra $\gB$.
\end{prop}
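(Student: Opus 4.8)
The plan is to split $\gB$ into countably many pieces, build the required embedding on each piece separately, and then glue. Since $\omega$ carries the discrete topology, $\dsp$ is the topological sum of countably many copies of $2^\omega$, so $\mathrm{Clopen}(\dsp)$ is canonically the full product $\prod_{n\in\omega}\mathrm{Clopen}(2^\omega)$, an element being a sequence $(A_n)_n$ of clopen subsets of $2^\omega$ with coordinatewise operations. As $\gB$ is atomless and $\sigma$-complete it carries a countable partition of unity $(b_n)_{n\in\omega}$ into non-zero elements (bisect $1$ repeatedly to get pairwise disjoint non-zero elements, then absorb the remaining complement into one of them). Each relative algebra $\gB_{b_n}$ is again atomless and $\sigma$-complete, and the restriction to it of a strictly positive continuous submeasure $\mu$ witnessing that $\gB$ is Maharam is again strictly positive and continuous, hence diffuse by Fact \ref{diffusefact}. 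So it is enough to show: for every non-zero $b$ such that $\gB_b$ is atomless, $\sigma$-complete and carries a strictly positive continuous submeasure $\mu$, there is an embedding $\scr{F}_b:\mathrm{Clopen}(2^\omega)\to\gB_b$ with $\scr{F}_b(2^\omega)=b$ such that for every sequence $(f_i)_{i\in\omega}$ from $2^\omega$ there exist $N_i\in\omega$ with $\bigvee_i\scr{F}_b([f_i\restriction N_i])<b$. Indeed, $\scr{F}\big((A_n)_n\big):=\bigvee_n\scr{F}_{b_n}(A_n)$ is then a unital, injective Boolean homomorphism $\mathrm{Clopen}(\dsp)\to\gB$ with $\scr{F}(\{n\}\times 2^\omega)=b_n$ and $\scr{F}(\{n\}\times[f_i\restriction N_i])=\scr{F}_{b_n}([f_i\restriction N_i])$, so the inequality required in Definition \ref{cb} (with its ``$\sum$'' read as the countable join) is exactly the inequality for $\scr{F}_{b_n}$.

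For a fixed such $b$ I would build a binary Cantor scheme $(a_s)_{s\in 2^{<\omega}}$ in $\gB_b$: $a_{\langle\rangle}=b$, $a_s=a_{s0}\vee a_{s1}$ with $a_{s0}\wedge a_{s1}=0$, every $a_s\neq 0$, and moreover $\mu(a_{x\restriction m})\to 0$ as $m\to\infty$ for every $x\in 2^\omega$. This is done in stages, starting from the one-node tree $a_{\langle\rangle}=b$. At stage $j\geq 1$, given the finite binary tree constructed so far with leaf-antichain $F$, do the following for every leaf $\sigma\in F$: $\gB_{a_\sigma}$ is atomless and $\sigma$-complete and $\mu\restriction\gB_{a_\sigma}$ is strictly positive and continuous, hence diffuse by Fact \ref{diffusefact}, so fix a finite partition of $a_\sigma$ into non-zero pieces each of $\mu$-value $<2^{-j}$; now extend the binary tree a few levels below $\sigma$, at each node choosing the two-way split so as to refine this partition (when a node already lies below a single piece, split it into two non-zero halves, which is possible by atomlessness), and finally pad below every leaf with arbitrary non-zero two-way splits so that all leaves lie at a common height $h_j$. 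By construction every node at level $h_j$ lies below some piece of the corresponding partition, hence has $\mu$-value $<2^{-j}$; since $\mu$ is monotone, $\mu(a_{x\restriction m})<2^{-j}$ for all $m\geq h_j$ and all $x$, so $\mu(a_{x\restriction m})\to 0$. Extending $[s]\mapsto a_s$ to finite unions of basic clopen sets yields, via the scheme relations, a well-defined Boolean homomorphism $\scr{F}_b:\mathrm{Clopen}(2^\omega)\to\gB_b$ with $\scr{F}_b(2^\omega)=b$; it is injective because each $a_s\neq 0$.

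To finish I would verify the separation property of $\scr{F}_b$. Given $(f_i)_{i\in\omega}$ from $2^\omega$, use $\mu(a_{f_i\restriction m})\to 0$ and $\mu(b)>0$ (strict positivity) to pick, for each $i$, an $N_i$ with $\mu(a_{f_i\restriction N_i})<2^{-i-2}\mu(b)$. Since $\mu$ is continuous, $\mu\big(\bigvee_i a_{f_i\restriction N_i}\big)=\lim_K\mu\big(\bigvee_{i\leq K}a_{f_i\restriction N_i}\big)\leq\sum_i\mu(a_{f_i\restriction N_i})\leq\tfrac12\mu(b)<\mu(b)$, so $\bigvee_i a_{f_i\restriction N_i}\neq b$; being $\leq b$ it is $<b$, i.e. $\bigvee_i\scr{F}_b([f_i\restriction N_i])<b$, as wanted. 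Together with the gluing described in the first paragraph this proves the proposition.

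The only point that needs real care is that a strictly positive continuous submeasure need not be uniformly exhaustive — this is essentially Maharam's problem — so one cannot hope to build the Cantor scheme with $\mu(a_s)$ uniformly small in terms of $|s|$, and regrouping the small pieces of a fine partition into two pieces may well produce large pieces. The staged construction avoids this: it only requires that every branch eventually passes through small nodes, the intervening (possibly large) nodes being harmless; the sole algebraic input is the trivial fact that in an atomless algebra any non-zero element splits into two non-zero pieces, compatibly with a prescribed finite partition.
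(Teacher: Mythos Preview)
Your argument is correct and follows the same outline as the paper's proof: partition $\gB$, on each piece build an embedding of $\mathrm{Clopen}(2^\omega)$ so that the $\mu$-value of the image of $[f\restriction k]$ tends to $0$ along every branch, and conclude by $\sigma$-subadditivity and strict positivity. The only difference is cosmetic: where you construct the Cantor scheme by hand via a staged diffuseness argument, the paper instead generates a countable atomless subalgebra of each $\gB_{a_i}$ from a sequence of ever-finer partitions and then invokes the uniqueness of the countable atomless Boolean algebra to get the embedding in one stroke.
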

\begin{proof} Let $\gB$ be a Maharam algebra carrying a strictly positive continuous submeasure $\mu$. Let $(a_i)_{i\in\omega}$ be a partition of $\gB$. Since $\mu$ will be diffuse, for each $i,j\in\omega$ we can find a partition $A_i^j$ of $a_i$ into finitely many pieces each with $\mu$-measure not greater than $\frac{1}{j+1}$. For each $i$, let $\gB_i$ be the (countable atomless) subalgebra of $\gB_{a_i}$ generated by $\bigcup_{j} A_{i}^j$ and let $f_i:\scr{A}\rightarrow \gB_i$ be any isomorphism. Now let 
$$
\scr{F}(\bigcup_{n\in \omega}\{n\}\times B_n) = \sum_{n\in \omega}f_n(B_n).
$$
Let $f\in \func{\omega}{2}$ and $m\in \omega$. For each $\epsilon>0$ there exists a finite partition $a_1,a_2,...,a_n$ of  $\mathrm{Clopen}(\{m\}\times 2^\omega)$ such that for each $i$, $\mu(\scr{F}(a_i))\leq \epsilon$. But for $k$ large enough there will be an $i$ such that $\{m\}\times [f\restriction k]\subseteq a_i$, and so $\mu(\scr{F}(\{m\}\times [f\restriction k]))\rightarrow 0$ as $k\rightarrow \infty$. Thus given $f_0,f_1,...\in \func{\omega}{2}$ for each $i$ we can choose $N_i$ such that $\mu(\scr{F}(\{m\}\times [f\restriction N_i]))<2^{-i-2}\mu(\scr{F}(\{m\}\times 2^\omega))$. Then by $\sigma$-subadditivity of $\mu$ we get 
$$
\mu(\sum_{i\in\omega} \scr{F}(\{n\}\times [f_i\restriction N_i]))\leq \sum_{i\in\omega}\mu( \scr{F}(\{n\}\times [f_i\restriction N_i]))< \frac{1}{2}\mu(\scr{F}(\{n\}\times [\emptyset]),
$$ 
and since $\mu$ is strictly positive we are done.\end{proof}

\section{Forcing submeasures}\label{forcingsubs}
Let $\scr{P}$ be the collection of all normalised submeasures $\mu:\gA\rightarrow [0,1]\cap\scr{Q}$ where $\gA$ is a finite subalgebra of $\scr{A}$. Order $\scr{P}$ by reverse inclusion: $\mu\preceq \lambda$ if and only if $\lambda\subseteq \mu$.\\\\
In this section we prove the following.
\begin{thm}\label{1611111} Let $M$ be a countable transitive model of of ZFC. If $G\subseteq \scr{P}\in M$ is $\scr{P}$-generic over $M$ then $\lambda:=\bigcup G$ is a normalised submeasure on $\scr{A}$ that is not uniformly exhaustive and is such that for any antichain $(a_i)_{i\in\scr{N}}\in M$ we have $\lim_i\lambda(a_i)= 0$.
\end{thm}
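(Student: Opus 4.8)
I would obtain $\lambda=\bigcup G$ by meeting three families of dense subsets of $\scr{P}$, all of which lie in $M$, using throughout one basic way to extend a condition: if $\mu\in\scr{P}$ has domain a finite subalgebra $\gA\sub\scr{A}$ and $\gA'\sub\scr{A}$ is any finite superalgebra, then
$$
\mu'(x)\;=\;\min\{\mu(c):c\in\gA,\ x\le c\}
$$
defines a member of $\scr{P}$ with domain $\gA'$ extending $\mu$ (monotonicity and subadditivity are immediate, and $\mu'\res\gA=\mu$ since $\mu$ is monotone). From this, for each $a\in\scr{A}$ the set $\{\mu\in\scr{P}:a\in\mathrm{dom}(\mu)\}$ is dense and in $M$, so $\mathrm{dom}(\lambda)=\scr{A}$ by genericity; and since $G$ is a filter, any finitely many members of $\scr{A}$ lie in the domain of a single condition of $G$, whose restriction is a genuine normalised submeasure. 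Hence $\lambda$ is a well-defined normalised submeasure on $\scr{A}$.

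\textbf{Non-uniform exhaustivity.} For each $N\in\scr{N}$ let $E_N$ be the set of $\mu\in\scr{P}$ whose domain carries an antichain $b_1,\dots,b_N$ with $\mu(b_l)=1$ for all $l$. Given $\mu$ with domain $\gA$, split each atom of $\gA$ into $N$ nonzero pieces (possible as $\scr{A}$ is atomless) and let $b_l$ be the join of all $l$-th pieces; then each $b_l$ meets every atom of $\gA$, so the only element of $\gA$ above $b_l$ is $1$, and the extension $\mu'$ above has $\mu'(b_l)=\mu(1)=1$. Thus $E_N$ is dense and in $M$, and meeting every $E_N$ yields, for each $N$, an antichain of length $N$ on which $\lambda$ is identically $1$; so $\lambda$ is not uniformly exhaustive (take $\epsilon=1$ in the definition).

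\textbf{Killing ground-model antichains.} This is the substantive step. Fix an antichain $(a_i)_i\in M$ and a rational $\epsilon>0$, and for a condition $\mu$ let $c^\mu_i$ denote the least element of $\mathrm{dom}(\mu)$ lying above $a_i$. It suffices to show that $D_\epsilon=\{\mu\in\scr{P}:(\exists N)(\forall i>N)\ \mu(c^\mu_i)<\epsilon\}$ is dense — it clearly lies in $M$ — since for $\mu\in G\cap D_\epsilon$ we get $\lambda(a_i)\le\lambda(c^\mu_i)=\mu(c^\mu_i)<\epsilon$ for all large $i$, and letting $\epsilon\downarrow 0$ gives $\lim_i\lambda(a_i)=0$. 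For density, let $\mu\in\scr{P}$ have domain $\gA$ with atoms $c_1,\dots,c_m$. As $\gA$ has finitely many atoms, fix $N$ so large that every atom of $\gA$ meeting some $a_i$ already meets some $a_i$ with $i\le N$; put $b=1-(a_1\cup\dots\cup a_N)$, so that $a_i\le b$ for every $i>N$, and let $\gA'=\La\gA\cup\{b\}\Ra\sub\scr{A}$. Here I would \emph{not} use the min-extension but instead extend $\mu$ to a submeasure $\phi_Y$ on $\gA'$ by the cover construction of Definition \ref{generalsub} (read in the evident way for a finite algebra carrying a finite family of weighted elements), taking
$$
Y\;=\;\{(c,\mu(c)):c\in\gA\}\ \cup\ \{(c_j\cap b,\ \delta):c_j\cap b\ne 0\text{ and }c_j\cap(a_1\cup\dots\cup a_N)\ne 0\},
$$
where $\delta\in\scr{Q}$ is chosen with $0<m\delta<\epsilon$. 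One then checks $\phi_Y\in\scr{P}$ and that it extends $\mu$: the bound $\phi_Y(c)\le\mu(c)$ for $c\in\gA$ is trivial, and for the reverse note that in any $Y$-cover of such a $c$ each atom $c_j\le c$ forces a full-weight generator $(g,\mu(g))$ with $g\ge c_j$ into the cover — the only cheap generator that could help cover $c_j$ is $(c_j\cap b,\delta)$, which lies in $Y$ only when $c_j$ also has the nonzero part $c_j\cap(a_1\cup\dots\cup a_N)$, and that part can only be covered by a full-weight generator above $c_j$ — so iterated subadditivity of $\mu$ makes the cover cost at least $\mu(c)$. Finally, for $i>N$ we have $a_i\le b$, hence $c^{\phi_Y}_i=\bigcup\{c_j\cap b:a_i\cap c_j\ne 0\}$ is a union of at most $m$ of the sets $c_j\cap b$, each carrying a weight-$\delta$ generator in $Y$, so $\phi_Y(c^{\phi_Y}_i)\le m\delta<\epsilon$. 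Thus $\phi_Y\preceq\mu$ and $\phi_Y\in D_\epsilon$, which gives density; genericity then finishes the proof.

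\textbf{Where the difficulty lies.} The only delicate point is the last step. I need a condition that genuinely \emph{extends} $\mu$ — so it must agree with $\mu$ on $\gA$ exactly — yet pushes all the values $\mu(c^\mu_i)$ below $\epsilon$. The min-extension assigns $c_j\cap b$ the full value $\mu(c_j)$ and is useless, while any ad hoc downward adjustment tends either to violate subadditivity or to change $\mu$ on $\gA$. The cover submeasure $\phi_Y$ above is exactly what reconciles the two requirements, and the role of the choice of $N$ is to keep every atom $c_j$ the antichain ever meets ``anchored'': it retains a nonzero part outside $b$ that every cover must pay for at full price, so the cheap weight $\delta$ placed on $c_j\cap b$ cannot drag $\mu$ down on $\gA$.
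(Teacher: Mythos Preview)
Your argument is correct. The first two ingredients --- totality of $\lambda$ via the min-extension, and non-uniform exhaustivity by splitting each atom into $N$ pieces and taking the ``diagonal'' joins $b_l$ --- are exactly the paper's arguments (equation~(\ref{extendbyone}) in Lemma~\ref{itiscohen} and (D.\ref{D3}) of Lemma~\ref{densityargs}).

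For the antichain step you take a genuinely different route. The paper goes through (D.\ref{11})--(D.\ref{D2}): it extends any condition $p$ to an exhaustive submeasure $\varphi$ on \emph{all} of $\scr{A}$ by gluing finitely additive measures on the atoms of $\mathrm{dom}(p)$ (Lemma~\ref{1611112}), picks a single index $n$ with $\varphi(a_n)<\epsilon$, and restricts. The dense set it meets is $\{p:(\exists i)(a_i\in\mathrm{dom}(p)\wedge p(a_i)<\epsilon)\}$; the contradiction drawn from this in the paper's proof only refutes $(\forall i)(\lambda(a_i)\ge\epsilon)$, so as literally written it yields $\inf_i\lambda(a_i)=0$ rather than the stated limit. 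Your dense set $D_\epsilon$ instead controls \emph{all} large $i$ simultaneously, by passing to the single element $b=1-(a_1\cup\dots\cup a_N)$ rather than a single $a_n$; meeting $D_\epsilon$ gives $\limsup_i\lambda(a_i)\le\epsilon$ outright, so you get the full $\lim=0$. The cost is that the min-extension is useless here and you must build the extension by hand as a cover submeasure $\phi_Y$. Your ``anchoring'' choice of $N$ --- ensuring that every atom $c_j$ the antichain touches already meets some $a_i$ with $i\le N$, hence retains a nonzero piece $c_j\setminus b$ that any $Y$-cover must pay for at full $\mu$-price --- is exactly what makes $\phi_Y\restriction\gA=\mu$ go through, and your verification of that equality is correct: the cheap generators are confined to atoms split by $b$, so each atom below a given $c\in\gA$ still forces a full-weight generator above it into any cover, and subadditivity of $\mu$ does the rest.
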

In fact $\scr{P}$ is a well known forcing notion.
\begin{lem}\label{itiscohen} The separative quotient of $\scr{P}$ is countably infinite and atomless and therefore its Boolean completion is the Cohen algebra.
\end{lem}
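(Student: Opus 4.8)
The plan is to verify that $\scr{P}$ is a nonempty, countable, atomless partial order and then invoke standard facts. Countability is immediate: $\scr{A}$ is countable, so it has only countably many finite subsets and hence only countably many finite subalgebras, and over a fixed finite $\gA$ a submeasure is a function from the finite set $\gA$ into the countable set $[0,1]\cap\scr{Q}$; thus $\scr{P}$ is a countable union of countable sets. Nonemptiness is witnessed by the submeasure on $\{0_\scr{A},1_\scr{A}\}$ taking $1_\scr{A}$ to $1$.

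The substantive point is atomlessness: every $\mu\in\scr{P}$ should have two incompatible extensions. Given $\mu\in\scr{P}$ with domain a finite subalgebra $\gA\leq\scr{A}$, normalisation together with subadditivity applied to the partition of $1_\gA$ into its atoms forces $\mu(a)>0$ for some atom $a$ of $\gA$. Since $\scr{A}$ is atomless I would pick $a_0\in\scr{A}$ with $0<a_0<a$, set $a_1=a\wedge\neg a_0$ and $\gB=\La\gA\cup\{a_0\}\Ra$ — a finite subalgebra of $\scr{A}$ whose atoms are those of $\gA$ other than $a$ together with $a_0$ and $a_1$. The idea is then to extend $\mu$ to $\gB$ in two ways: $\nu_0$ routes all of the mass of $a$ to the $a_1$-side (so $\nu_0(a_0)=0$), and $\nu_1$ routes it to the $a_0$-side (so $\nu_1(a_0)=\mu(a)$). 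Concretely these are $\nu_i=\mu\circ h_i$, where $h_0,h_1:\gB\to\gA$ are the two Boolean retractions fixing $\gA$ pointwise, $h_0$ sending $a_0\mapsto 0_\gA$ and $h_1$ sending $a_1\mapsto 0_\gA$. Since a submeasure precomposed with a Boolean homomorphism is again a submeasure, $\nu_0$ and $\nu_1$ are normalised submeasures on $\gB$ valued in $[0,1]\cap\scr{Q}$, and each restricts to $\mu$; hence $\nu_0,\nu_1\preceq\mu$. They are incompatible, since $\nu_0(a_0)=0\neq\mu(a)=\nu_1(a_0)$ and so no function, in particular no member of $\scr{P}$, can extend both. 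Therefore $\scr{P}$ is atomless.

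From here everything is bookkeeping. Atomlessness passes to the separative quotient $\scr{P}/{\approx}$, which is thus a nonempty atomless poset and hence infinite (split repeatedly to get a strictly descending $\omega$-chain), and it is countable as a quotient of $\scr{P}$; so it is countably infinite and atomless. Its Boolean completion $\mathrm{RO}(\scr{P})$ is a complete — in particular $\sigma$-complete — Boolean algebra; it has a countable dense subset (the image of $\scr{P}$); and it is atomless, because any nonzero element lies above the image of some $p\in\scr{P}$, which lies above the images of two incompatible extensions of $p$. By the characterization of the Cohen algebra recalled in Section~\ref{prels} — the unique atomless $\sigma$-complete Boolean algebra with a countable dense subset — $\mathrm{RO}(\scr{P})$ is the Cohen algebra.

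The only step requiring genuine (though routine) verification is that $h_0$ and $h_1$ really are Boolean homomorphisms, equivalently that $\nu_0,\nu_1$ are submeasures on $\gB$ rather than merely on $\gA$; once the split is arranged so that $\mu(a)>0$, this is the whole content of the argument, everything else being standard facts about separative quotients and the uniqueness of the Cohen algebra.
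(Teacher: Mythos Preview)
Your proof is correct and follows essentially the same approach as the paper: pick an atom $a$ of $\mathrm{dom}(\mu)$ with $\mu(a)>0$, split it, and produce two extensions disagreeing on the new piece. Your packaging via the Boolean retractions $h_0,h_1$ is a bit cleaner than the paper's direct definitions of $\lambda_1,\lambda_2$, and you derive infiniteness of the separative quotient from atomlessness rather than by a separate argument, but the substance is the same.
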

\begin{proof} Let $\scr{P}'$ be the separative quotient of $\scr{P}$. Since the submeasures in $\scr{P}$ only take rational values and we have assumed $\gB$ is countable the partial order $\scr{P}$ is also countable and so $\scr{P}'$ is at most countable. Given $a\in \gB^+\setminus \{{1}\}$ and $q\in \scr{Q}\cap [0,1]$ we can always find a submeasure $\lambda_q\in \scr{P}$ such that $\lambda_q(a) = q$.  The $\lambda_q$ correspond to countably many distinct equivalence classes of $\scr{P}'$, so $\scr{P}'$ is infinite. Now suppose that $\lambda\in \scr{P}$ and let $a$ be an atom of $\mathrm{dom}(\lambda)$ such that $\lambda(a)>0$.
Let $c\in\scr{A}^+$ be such that $c<a$ and let $\gA$ be the subalgebra generated by $\mathrm{dom}(\lambda)\cup \{c\}$. Let $\lambda_1$ be the submeasure on $\gA$ defined by 
\begin{equation}\label{extendbyone}
\lambda_1(b) = \min\{\lambda(d):d\in \gA\wedge b\subseteq d\}.
\end{equation}
Then $\lambda_1\preceq \lambda$ and $\lambda_1(c) = \lambda(a)$. Now given $b\in \gA$ we can find $b'\in \mathrm{dom}(\lambda)$ and $b''\in \{c,a\setminus c,0\}$ such that $b = b'\sqcup b''$, so we can take $\lambda_2$ to be the submeasure on $\gA$ defined by
$$
 \lambda_2(b) = \left\{\begin{array}{cl} \lambda(b'\cup a),\quad&\mbox{if $b'' = a\setminus c$;}\\
\lambda(b'),\quad&\mbox{otherwise.}\end{array}\right.
$$
Then $\lambda_2\preceq \lambda$ and $\lambda_2(c) = 0 \neq \lambda_1(c)$. Thus $\lambda_1$ and $\lambda_2$ correspond to two different members of $\scr{P}'$ and so one of them must define a different equivalence class to $\lambda$, and we are done.
\end{proof}
Thus Theorem \ref{1611111}, together with Lemma \ref{itiscohen}, is saying that in any forcing extension adding a Cohen real there exists a submeasure, constructed from $\scr{P}$, that is not uniformly exhaustive but is exhaustive with respect to the antichains from the ground model. Of course in any forcing extension due to $\scr{P}$ new antichains might be added that have not been accounted for. It is not clear to us how to proceed. A natural direction to pursue would be to attempt a forcing iteration of some sort. However it seems that the claims gathered here do not give us enough information about the added submeasure to do so.\\\\
Let us prove Theorem \ref{1611111}. The following is implicit in \cite[Theorem 2]{herer75}, and actually we have already used two instances of it in Lemma \ref{itiscohen}.
\begin{lem}\label{1611112} Let $\gC$ be an atomless Boolean algebra carrying a normalised submeasure $\mu$ and let $a_0,a_1,...,a_n\in \gC^+$ be a finite partition of $\gC$. Let $\varphi_0,...,\varphi_n$ be normalised submeasures on $\gC_{a_0},...,\gC_{a_n}$, respectively. Then $\gC$ carries a normalised submeasure $\varphi$ such that for each $a\in \bigcup_{i}\gC_{a_i} \cup \La a_0,...,a_n\Ra$, we have
\begin{equation}\label{fromchris}
\varphi(a) =  \left\{\begin{array}{cl} \mu(a_i)\varphi_i(a),\quad&\mbox{if $i\in n+1$ and $a\in\gC_{a_i}$;}\\
\mu(a),\quad&\mbox{otherwise.}\end{array}\right.
\end{equation}
Moreover, if the $\mu$ and the $\varphi_i$ take only rational values then so does $\varphi$.
\end{lem}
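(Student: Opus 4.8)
The plan is to realise $\varphi$ as a covering submeasure in the spirit of Definition~\ref{generalsub}, assembled from the pieces of the $\varphi_i$ and of $\mu$. Let
$$
Y = \bigcup_{i=0}^{n}\{(b,\mu(a_i)\varphi_i(b)) : b\in\gC_{a_i}\}\ \cup\ \{(b,\mu(b)) : b\in\La a_0,\dots,a_n\Ra\}\ \subseteq\ \gC\times\scr{R}_{\geq 0},
$$
and for $a\in\gC$ put
$$
\varphi(a) = \inf\Bigl\{{\textstyle\sum_{j\leq r}w_j} : r\in\scr{N},\ (b_j,w_j)\in Y \text{ for } j\leq r,\ a\leq{\textstyle\bigcup_{j\leq r} b_j}\Bigr\}.
$$
Since $(1,\mu(1))=(1,1)\in Y$, every $a$ has a cover, so $\varphi$ is well defined, and exactly as with Definition~\ref{generalsub} it is a submeasure with $\varphi(0)=0$ and $\varphi(1)\leq 1$ (use the cover $\{(1,1)\}$). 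Using the one-element covers $\{(a,\mu(a_i)\varphi_i(a))\}$ for $a\in\gC_{a_i}$ and $\{(a,\mu(a))\}$ for $a\in\La a_0,\dots,a_n\Ra$ gives the two inequalities $\varphi(a)\leq\mu(a_i)\varphi_i(a)$ and $\varphi(a)\leq\mu(a)$ needed for \eqref{fromchris}. Everything then reduces to the reverse inequalities, which I would extract from the following formula, valid for all $a\in\gC$:
\begin{equation}
\varphi(a) = \min_{K\subseteq\{0,\dots,n\}}\Bigl(\mu\bigl({\textstyle\bigsqcup_{k\in K}a_k}\bigr) + {\textstyle\sum_{k\notin K}\mu(a_k)\,\varphi_k(a\wedge a_k)}\Bigr).\tag{$\ast$}
\end{equation}

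For the bound $\leq$ in $(\ast)$, given $K$ take the cover consisting of $\bigl(\bigsqcup_{k\in K}a_k,\ \mu(\bigsqcup_{k\in K}a_k)\bigr)$ together with the pieces $(a\wedge a_k,\ \mu(a_k)\varphi_k(a\wedge a_k))$ for $k\notin K$: its union contains $\bigsqcup_k(a\wedge a_k)=a$, and its total weight is exactly the $K$-th term. The bound $\geq$ is the crux. Given any $Y$-cover $\{(b_j,w_j)\}_{j\leq r}$ of $a$, discard the pieces with $b_j=0$, let $J_0$ collect the indices with $b_j\in\La a_0,\dots,a_n\Ra$, and for each $k$ let $J_k$ collect the remaining indices with $b_j\in\gC_{a_k}$ (the $J_k$ are pairwise disjoint since the $a_k$ are). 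Set $K=\{k : a_k\leq b_j \text{ for some } j\in J_0\}$. Two facts drive the estimate. First, each $a_k$ is an atom of $\La a_0,\dots,a_n\Ra$, so every piece indexed by $J_0$ is either $\geq a_k$ or disjoint from $a_k$; hence for $k\notin K$ the pieces $\{b_j : j\in J_k\}$ already cover $a\wedge a_k$, and monotonicity and subadditivity of $\varphi_k$ give $\sum_{j\in J_k}w_j=\mu(a_k)\sum_{j\in J_k}\varphi_k(b_j)\geq\mu(a_k)\varphi_k(a\wedge a_k)$. Second, $\bigcup_{j\in J_0}b_j\geq\bigsqcup_{k\in K}a_k$ by the definition of $K$, so monotonicity and subadditivity of $\mu$ give $\sum_{j\in J_0}w_j\geq\mu(\bigsqcup_{k\in K}a_k)$. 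Summing, and discarding the nonnegative contributions of the $J_k$ with $k\in K$, yields $\sum_{j\leq r}w_j\geq\mu(\bigsqcup_{k\in K}a_k)+\sum_{k\notin K}\mu(a_k)\varphi_k(a\wedge a_k)$, which dominates the right-hand side of $(\ast)$.

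With $(\ast)$ the lemma follows by inspection. If $a\in\gC_{a_i}$ then $\varphi_k(a\wedge a_k)=\varphi_k(0)=0$ for $k\neq i$, and since $\varphi_i(a)\leq\varphi_i(a_i)=1$ the choice $K=\emptyset$ is optimal, so $\varphi(a)=\mu(a_i)\varphi_i(a)$. If $a=\bigsqcup_{k\in S}a_k\in\La a_0,\dots,a_n\Ra$ then every $K$-term is $\geq\mu(a)$ by monotonicity and subadditivity of $\mu$ (as $\bigsqcup_{k\in K}a_k\cup\bigsqcup_{k\in S\setminus K}a_k\geq a$), while $K=S$ attains it; so $\varphi(a)=\mu(a)$, and in particular $\varphi(1)=\mu(1)=1$, so $\varphi$ is normalised. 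Finally, if $\mu$ and the $\varphi_i$ take only rational values then every term in $(\ast)$ is rational, hence so is each $\varphi(a)$, being a minimum of finitely many of them. (Atomlessness of $\gC$ is not used.) The only delicate point is the bound $\geq$ in $(\ast)$: one must avoid splitting the weight of a single $\La a_0,\dots,a_n\Ra$-piece among several atoms $a_k$, and it is precisely the atom structure of $\La a_0,\dots,a_n\Ra$ that makes the accounting work.
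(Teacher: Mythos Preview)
Your proof is correct and follows essentially the same approach as the paper: both define $\varphi$ as the covering submeasure induced by the weighted family $\bigcup_i\gC_{a_i}\cup\La a_0,\dots,a_n\Ra$ and then argue that the infimum is actually a minimum attained by a cover of a specific normalised shape. Your treatment is more explicit than the paper's---in particular your formula $(\ast)$ makes precise exactly which covers realise the minimum (indexed by subsets $K\subseteq\{0,\dots,n\}$), whereas the paper only sketches that any cover can be reduced to one with disjoint pieces satisfying $A'\cap\gC_{a_i}\in\{\emptyset,a\wedge a_i\}$ and leaves the verification of \eqref{fromchris} implicit---but the underlying idea is the same.
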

\begin{proof} Let $\gA = \La a_0,a_1,...,a_n\Ra$. Define the function $f:\bigcup_{i = 0}^n \gC_{a_i} \cup \gA\rightarrow \scr{R}$ by 
$$
f(a) =  \left\{\begin{array}{cl} \mu(a_i)\varphi_i(a),\quad&\mbox{if $i\in n+1$ and $a\in\gC_{a_i}$;}\\
\mu(a),\quad&\mbox{otherwise.}\end{array}\right.
$$
Now define $\varphi:\gC\rightarrow \scr{R}$ by
\begin{equation}\label{inftomin}
\varphi(a) = \inf\{\sum_{c\in A}f(c):A\in [\bigcup_{i = 0}^n \gC_{a_i}\cup \gA]^{<\omega} \wedge c\leq  \sum A\}.
\end{equation}
It is straightforward to check that $\varphi$ is a submeasure.\\\\
Finally let us observe that the value in (\ref{inftomin}) can always be achieved by a cover from $[\bigcup_{i = 0}^n \gC_{a_i}\cup \gA]^{<\omega}$, that is to say, in (\ref{inftomin}) we are actually taking a minimum rather than an infimum. In particular the submeasure $\varphi$ will be rational valued if $\mu$ and the $\varphi_i$ are. To obtain the minimum, note that if $a\in \gC$ then $a$ will be a disjoint union of sets $c_i\in \gC_{a_i}$ and so if $A\in [\bigcup_{i = 0}^n \gC_{a_i}\cup \gA]^{<\omega}$ is a cover of $a$ then we can find another cover $A'\in [\bigcup_{i = 0}^n \gC_{a_i}\cup \gA]^{<\omega}$ such that the members of $A'$ are pairwise disjoint, $(\forall i)(A'\cap \gC_{a_i} \in \{\emptyset, c_i\})$ and 
$$
\sum_{c\in A'}f(c)\leq \sum_{c\in A}f(c).
$$ 
Since there are only finitely many such $A'$ (given $a$) we are done.
\end{proof}
If $\gC$ is a Boolean algebra carrying a normalised submeasure $\mu:\gC\rightarrow [0,1]$ and $n\in \scr{N}$ then we will say that $\mu$ is \emph{$n$-pathological} if we can find disjoint $a_1,...,a_n\in\gC$ such that
$$
(\forall i)(\mu(a_i) = 1).
$$
The density arguments needed for $\scr{P}$ are summarised by the following lemma.
\begin{lem}\label{densityargs} Let $p\in \scr{P}$ and let $\gA$ be the domain of $p$. 
\begin{enumerate}[(D.1)]
\item There exists an exhaustive submeasure $\varphi:\scr{A}\rightarrow [0,1]\cap \scr{Q}$ extending $p$.\label{11}
\item For every $\epsilon\in (0,1]$ and antichain $(a_i)_{i\in\omega}$ from $\scr{A}$, there exists $q\in \scr{P}$ extending $p$, such that for some $i\in\omega$ we have $a_i\in \mathrm{dom}(q)$ and $q(a_i)< \epsilon$.\label{D2}
\item For any $n\in \scr{N}$ there exists an $n$-pathological submeasure $q\in \scr{P}$ such that $q\preceq p$.\label{D3}
\end{enumerate}
\end{lem}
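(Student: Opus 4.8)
The plan is to establish (D.1) first, to deduce (D.2) from it in a line, and then to settle (D.3) by a direct construction; the one step requiring genuine work is the verification that the submeasure produced in (D.1) is exhaustive. For (D.1), let $e_1,\dots,e_k$ enumerate the atoms of $\gA$. First I would form the minimal extension of $p$ to all of $\scr{A}$, namely $\mu_0(b)=\min\{p(d):d\in\gA\wedge b\subseteq d\}$; a routine check (using only monotonicity and subadditivity of $p$, finiteness of $\gA$, and normalisation) shows that $\mu_0$ is a normalised rational-valued submeasure on $\scr{A}$ with $\mu_0\restriction\gA=p$. Next, since each $\scr{A}_{e_i}$ is a countable atomless Boolean algebra it is isomorphic to $\mathrm{Clopen}(\func{\omega}{2})$, and transporting the standard probability measure along such an isomorphism — a measure that is normalised, dyadic-rational valued, and, being finitely additive, exhaustive — yields a normalised exhaustive rational-valued submeasure $\varphi_i$ on $\scr{A}_{e_i}$. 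I would then apply Lemma~\ref{1611112} to $\gC=\scr{A}$ with the partition $e_1,\dots,e_k$, ground submeasure $\mu_0$, and the submeasures $\varphi_1,\dots,\varphi_k$: the output $\varphi$ is a normalised rational-valued submeasure defined on all of $\scr{A}$ and satisfying $\varphi\restriction\gA=\mu_0\restriction\gA=p$.

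The remaining — and hardest — point is that this $\varphi$ is exhaustive. From the infimum-of-covers formula defining $\varphi$ in the proof of Lemma~\ref{1611112}, covering an arbitrary $b\in\scr{A}$ by its own disjoint pieces $b\cap e_1,\dots,b\cap e_k$ yields the bound $\varphi(b)\le\sum_{i=1}^{k}p(e_i)\,\varphi_i(b\cap e_i)$. Now if $(b_n)_n$ is an antichain in $\scr{A}$, then for each fixed $i$ the sequence $(b_n\cap e_i)_n$ is an antichain in $\scr{A}_{e_i}$, so $\varphi_i(b_n\cap e_i)\to 0$ by exhaustivity of $\varphi_i$; as the sum has only the $k$ terms, $\varphi(b_n)\to 0$, which is exactly exhaustivity of $\varphi$.

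For (D.2): given $\epsilon\in(0,1]$ and an antichain $(a_i)_{i\in\omega}$ from $\scr{A}$, take the exhaustive $\varphi$ just built; exhaustivity provides some $i$ with $\varphi(a_i)<\epsilon$, and then $q:=\varphi\restriction\La\mathrm{dom}(p)\cup\{a_i\}\Ra$ belongs to $\scr{P}$, extends $p$ (so $q\preceq p$), has $a_i\in\mathrm{dom}(q)$, and satisfies $q(a_i)=\varphi(a_i)<\epsilon$. For (D.3): using atomlessness of $\scr{A}$, split each atom $e_j$ of $\gA$ into $n$ nonzero pieces $c_{j,1},\dots,c_{j,n}$, put $\gA'=\La\gA\cup\{c_{j,i}\}\Ra$, and set $b_i=\bigcup_{j=1}^{k}c_{j,i}$ for $i\in[n]$; the sets $b_1,\dots,b_n$ are pairwise disjoint and each meets every atom of $\gA$, so the least member of $\gA$ lying above $b_i$ is $1$. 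Taking $q$ to be the minimal extension of $p$ to $\gA'$, $q(b)=\min\{p(d):d\in\gA\wedge b\subseteq d\}$, we obtain — as in (D.1) — a normalised rational-valued submeasure with $q\restriction\gA=p$, so $q\in\scr{P}$ and $q\preceq p$, while $q(b_i)=p(1)=1$ for every $i\in[n]$, making $q$ $n$-pathological. Thus the whole difficulty sits in the exhaustivity estimate of (D.1); (D.2) is then immediate and (D.3) is a short explicit construction via the minimal extension.
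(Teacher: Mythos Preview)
Your proof is correct and follows essentially the same route as the paper: for (D.\ref{11}) you invoke Lemma~\ref{1611112} with the atoms of $\gA$ and Lebesgue-type measures on each $\scr{A}_{e_i}$, then check exhaustivity via the finite sum bound; (D.\ref{D2}) is deduced in one line from (D.\ref{11}); and (D.\ref{D3}) is the same splitting-plus-minimal-extension construction. The only cosmetic difference is that you first pass through the minimal extension $\mu_0$ before applying Lemma~\ref{1611112}, whereas the paper applies the lemma directly with $\mu=p$ (which is harmless since formula~(\ref{fromchris}) only evaluates $\mu$ on $\La a_0,\dots,a_n\Ra=\gA$).
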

\begin{proof} For (D.\ref{11}), let $a_0,...,a_n$ be the atoms of $\gA$ and for each $i\in n+1$ let $\varphi_i:\scr{A}_{a_i}\rightarrow \scr{Q}$ be a normalised finitely additive measure (take the Lebesgue measure for example). Let $\varphi$ be the submeasure promised by Lemma \ref{1611112}. To see that $\varphi$ is exhaustive let $(b_i)_{i\in\omega}$ be a disjoint sequence in $\scr{A}$ and fix $\epsilon >0$. For each $i\in n+1$ let $D_i = \{b_j\cap a_i:j\in\omega\}$. Then each $D_i$ is a disjoint sequence in $\scr{A}_{a_i}$ and so, since each $\varphi_i$ is exhaustive, we can find an $N$ such that
$$
(\forall m\geq N)(\varphi(b_m) = \varphi(\bigsqcup_{i\in n+1}b_m\cap a_i)\leq \sum_{i\in n+1}\varphi(b_m\cap a_i) = \sum_{i\in n+1}\varphi_i(b_m\cap a_i) \leq (n+1)\epsilon).
$$
Since $\epsilon$ was arbitrary (and $n$ was fixed) we are done.\\\\
The claim (D.\ref{D2}) follows from (D.\ref{11}) since we can find an exhaustive submeasure $\varphi:\scr{A}\rightarrow [0,1]\cap \scr{Q}$ extending $p$, so that for some $n$ we have $\varphi(a_n)<\epsilon$. Let $\gC$ be the algebra generated by $\gA\cup\{a_n\}$ and take $q= \varphi\restriction \gC$.\\\\
For (D.\ref{D3}), let $b_0,....,b_k$ be the atoms of $\gA$.  For each $i\in k+1$ let $b^i_1,...,b^i_n$ be a partition of $b_i$ into non-zero pieces. Let $\gC$ be the subalgebra of $\scr{A}$ generated by $\{b^i_j:i\in k+1,j\in [n]\})$. For $a\in \gC$ let 
\begin{equation}\label{0609121}
q(a) = p(\bigcap \{b\in \gC:a\leq b\}).
\end{equation}
Then $q:\gA\rightarrow [0,1]$ is a submeasure extending $p$. Also if, for $l\in [n]$, we let $a_l = \bigcup_{i\in k+1} b^i_l$ then (since $a_l\not\in \gA$ and $a_l$ intersects each atom of $\gC$) $q(a_l) = 1$ and of course the $a_l$ are pairwise disjoint.
\end{proof}
\begin{proof}[Proof of Theorem \ref{1611111}.] The fact that $\lambda\in\func{\gB}{\scr{R}}$ follows by the genericity of $G$ and the fact that for any $p\in \scr{P}$ and $a\not\in \mathrm{dom}(p)$, we can find $q\preceq p$ such that $a\in \mathrm{dom}(q)$ (for example, see (\ref{0609121})). It is a normalised submeasure because its restriction to any finite subalgebra of $\gB$ is. By (D.\ref{D3}), for each $n\in\scr{N}$, the set $\{p\in \scr{P}:\mbox{$p$ is $n$-pathological}\}$ is dense in $\scr{P}$ and so for each $n$, we can find an $n$-pathological $p\in G$. The disjoint sequence that witnesses this, $a_1,...,a_n$, is such that $(\forall i)(\lambda(a_i) = p(a_i)= 1)$. Thus $\lambda$ cannot be uniformly exhaustive. Suppose for a contradiction that for some antichain $(a_i)_{i\in\omega}$ in $M$ and $\epsilon \in (0,1]$ we have $(\forall i)(\lambda(a_i)\geq \epsilon)$. By (D.\ref{D2}) the set $D = \{p\in\scr{P}:(\exists i\in \scr{N})(a_i\in p \wedge p(a_i) < \epsilon)\}$ is dense. Thus we can find a $p\in  G\cap D$ and an $i\in\omega$ such that $a_i\in \mathrm{dom}(p) \subseteq \mathrm{dom}(\lambda)$ and $\lambda(a_i) = p(a_i) < \epsilon$, which is a contradiction.\end{proof}
\section{Talagrand's ideal}\label{forcingideal}
Recall that $\cT$ is the product space $\prod_{i\in\scr{N}}[2^i]$, $\scr{T} = \mathrm{Clopen}(\cT)$ and $\nu:\scr{T}\rightarrow \scr{R}$ is Talagrand's submeasure. We may extend $\nu$ to a $\sigma$-subadditive  submeasure on $\cP(\cT)$ by
\begin{equation}\label{100}
\nu(A) = \inf\{\sum_{i\in \scr{N}}\nu(A_i):A_i\in \scr{T}\wedge A\subseteq \bigcup_{i\in \scr{N}}A_i\}
\end{equation}
where the restriction of $\nu$ to $\mathrm{Borel}(\cT)$ is a continuous submeasure (see for example \cite[Proposition 7.1]{roberts91}). Plainly this extension remains pathological. Let 
$$
\pathh = \{A\in \cP(\cT):\nu(A) = 0\}.
$$
Let $\meag$ be the ideal of meagre subsets of $\cT$ and $\nulli$ be the ideal of Lebesgue null sets. For the rest of this section fix a countable transitive model $M$ of $\zfc$ and $\cU\in M$ such that, in $M$, $\cU$ is a non-principal ultrafilter. By $\nu^M$ we mean Talagrand's submeasure as defined in $M$ and with respect to $\cU$. By $\pathh_M$ we mean the collection (in $M$) of $\nu^M$-null sets. We will also denote the complete Boolean algebra $\mathrm{Borel}(\cT)/\pathh$, as computed in $M$, by $\pathh_M$. By $N$ we mean either a countable transitive model of $ZFC$ such that $M\subseteq N$ or $V$ itself. By $\nu^N$ we mean $\nu$ as defined in $N$ with respect to \emph{any} non-principal ultrafilter $\cV$ (in $N$) such that $\cU\subseteq \cV$. Such an ultrafilter exists since $\cU$ will always have the finite intersection property and will not contain any finite sets, and so any non-principal extension will do. We do not know if different ultrafilters produce different ideals, nevertheless, the choice of the ultrafilter here will not matter. We let $\pathh_N$ denote the collection of $\nu^N$-null sets. If $\cV$ is any non-principal ultrafilter over $\scr{N}$ we let $\nu_\cV$ be Talagrand's submeasure defined with respect to the ultrafilter $\cV$.\\\\
Given a subset $A$ of $\bc$ let $R(A) = \{f\in \cT:(\forall c\in A)(A_c\in \nulli\rightarrow f\not\in A_c)\}$. By $\bc$ we mean the collection of Borel codes as described in \cite[Chapter 25]{jech03}. If $H$ is a countable transitive model of $ZFC$ then $R(\bc\cap H)$ is just the collection of random reals over $H$. We prove the following.
\begin{thm}\label{mainnow} Let $G$ be a $\pathh_M$-generic filter over $M$. Then in $M[G]$ we have
$$
(\forall \cV)(\mbox{$(\cV$ is a non-principal ultrafilter on $\scr{N} \wedge \cU\subseteq \cV) \rightarrow \nu_\cV(R(\bc\cap M)) = 0$}).
$$
\end{thm}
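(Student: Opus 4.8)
The plan is to reduce the assertion to a single covering fact about Talagrand's basic clopen sets that lives inside $M$, after which genericity of $G$ essentially drops out: all that is used is $M\subseteq M[G]$. Throughout, $\mu$ denotes the natural product (Lebesgue) measure on $\cT=\prod_{n\in\scr{N}}[2^n]$, so that the coordinates are $\mu$-independent and $\mu(S_{n,\tau})=1-2^{-n}$ for $\tau\in[2^n]$.

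I claim it suffices to establish the following, inside $M$: $(\star)$ for every rational $\epsilon\in(0,1)$ there are a sequence $(C_j)_{j\in\scr{N}}$ of members of $\scr{T}$ and triples $(C_j,I_j,w_j)\in\cD$ with $\sum_j w_j<\epsilon$ and $\mu(\bigcup_j C_j)=1$. Granting $(\star)$, work in $M[G]$ and fix such an $\epsilon$ and a non-principal ultrafilter $\cV\supseteq\cU$ on $\scr{N}$. Since $(C_j)_j\in M$ and each $C_j$ is clopen with $\mu(C_j)\ge 1-2^{-j}$, the set $A:=\cT\setminus\bigcup_j C_j$ is coded in $M$ and has $\mu(A)=0$; hence every real random over $M$ lies outside $A$, that is, $R(\bc\cap M)\subseteq\bigcup_j C_j$. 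Because $\cD$ is part of the family of triples used to build $\nu_\cV$ whatever the choice of ultrafilter, Definition~\ref{generalsub} gives $\nu_\cV(C_j)\le w_j$ (use the single triple $(C_j,I_j,w_j)$ as a cover of $C_j$). Extending $\nu_\cV$ to $\cP(\cT)$ by (\ref{100}) and using $\sigma$-subadditivity,
\begin{equation*}
\nu_\cV\big(R(\bc\cap M)\big)\ \le\ \nu_\cV\Big(\bigcup_j C_j\Big)\ \le\ \sum_j\nu_\cV(C_j)\ \le\ \sum_j w_j\ <\ \epsilon .
\end{equation*}
Letting $\epsilon\to 0$ through the rationals gives $\nu_\cV(R(\bc\cap M))=0$; as $\cV$ was arbitrary, this is the theorem.

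It remains to prove $(\star)$. For $k\in\scr{N}$ and $0\le j<\eta(k)$ set $I_{k,j}=\{j+1,\dots,\eta(k)\}$, choose any $\tau\in\prod_{n\in I_{k,j}}[2^n]$, and put $C_{k,j}=\bigcap_{n\in I_{k,j}}S_{n,\tau(n)}\in\scr{T}$ and $w_{k,j}=2^{-k}(\eta(k)/|I_{k,j}|)^{\alpha(k)}$, so that $(C_{k,j},I_{k,j},w_{k,j})\in\cD_k$. If $j\le\eta(k)/2$ then $|I_{k,j}|=\eta(k)-j\ge\eta(k)/2$, hence $\eta(k)/|I_{k,j}|\le 2$ and, since $\alpha(k)=(k+5)^{-3}<1$, $w_{k,j}\le 2^{-k+1}$; meanwhile, by independence of the coordinates,
\begin{equation*}
\mu(C_{k,j})=\prod_{n=j+1}^{\eta(k)}\big(1-2^{-n}\big)\ \ge\ \prod_{n>j}\big(1-2^{-n}\big)\ \ge\ 1-2^{-j} .
\end{equation*}
Now, given $\epsilon$, for each $j$ choose $k_j$ large enough that $\eta(k_j)\ge 2j$ and $2^{-k_j+1}<\epsilon\,2^{-j}$ (possible since $\eta(k)\to\infty$), and set $C_j=C_{k_j,j}$, $I_j=I_{k_j,j}$, $w_j=w_{k_j,j}$. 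Then $\sum_j w_j<\sum_j\epsilon\,2^{-j}=\epsilon$ while $\mu(\bigcup_j C_j)\ge\sup_j(1-2^{-j})=1$; and the whole construction uses only $\epsilon$ together with the arithmetically defined data $\eta,\alpha$ and the sets $S_{n,\tau}$, so it takes place in $M$. This proves $(\star)$.

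What carries the argument is the interplay of Talagrand's parameters: $\eta(k)\to\infty$ allows $|I_{k,j}|$ to stay comparable to $\eta(k)$ after the first $j$ coordinates are removed, and $\alpha(k)<1$ keeps the $\cD_k$-weight of $\bigcap_{n\in I_{k,j}}S_{n,\tau(n)}$ within a factor $2$ of $2^{-k}$. The substantive point is therefore the observation that the very sets used to make $\psi$ (hence $\nu$, hence every $\nu_\cV$) pathological, in the sense of Proposition~\ref{onpsi}, are themselves clopen sets of arbitrarily small submeasure whose union already has full Lebesgue measure; the rest — absoluteness of $\bc$, of $\mu$ restricted to clopen sets, and of the $\cD_k$'s, and $\sigma$-subadditivity of (\ref{100}) — is routine bookkeeping. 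I note finally that genericity of $G$ plays no role: the same proof yields $\nu_\cV(R(\bc\cap M))=0$ in any transitive model of ZFC extending $M$, and the hypothesis $\cU\subseteq\cV$ is likewise never used, only that $\cD$ sits inside the family defining $\nu_\cV$.
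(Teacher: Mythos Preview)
Your proof is correct, and it takes a genuinely different route from the paper.

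The paper's argument is modular: it first invokes Christensen's theorem (Fact~\ref{dualnull}) as a black box to obtain a Borel code $d\in M$ with $A_d\in\pathh_M$ and $\cT\setminus A_d\in\nulli_M$, and then appeals to the separate absoluteness result Proposition~\ref{1001124}, whose proof is a downward induction through all the classes $\cC_{k,p}$ and which is where the hypothesis $\cU\subseteq\cV$ is actually used (at the ultrafilter step passing from the $\cE_{k}^M$ to the $\cE_{k}^N$).

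Your argument is instead a direct, explicit construction: you build, inside $M$, a union of clopen $\cD$-sets of total $\cD$-weight below $\epsilon$ whose Lebesgue measure is $1$. In effect you are reproving, constructively and in a quantitative form, the special instance of Christensen's theorem that is needed. Because the only covering triples you ever use lie in $\cD$, whose definition is purely arithmetic and hence trivially absolute, you bypass Proposition~\ref{1001124} entirely. As you observe, this yields a slightly stronger conclusion: the same bound holds in any transitive $N\supseteq M$ (no genericity required), and for \emph{any} non-principal ultrafilter $\cV$, not merely those extending $\cU$, since all that is used is $\cD\subseteq\cC_1$.

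What each approach buys: the paper's route isolates two reusable ingredients (duality of $\pathh$ and $\nulli$; downward absoluteness of $\nu$), at the cost of more machinery and the extra hypothesis on $\cV$. Your route is shorter, self-contained, and sharper in its conclusion, though it does not produce Proposition~\ref{1001124} as a by-product.
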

The two claims we will need are as follows, the first is due to Christensen.
\begin{fact}\emph{(\cite[Theorem 1]{christ77})}\label{dualnull} There exists a Borel set $A$ such that $A\in \pathh$ and $\cT\setminus A \in \nulli$. \end{fact}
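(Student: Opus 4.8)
The plan is to read a witness straight off the definition of $\cD$, rather than invoking Christensen's general machinery. Recall that $\nu=\nu_1\le\psi=\phi_\cD$, and that for each $k$ the family $\cD_k$ contains, for every finite $I\subseteq\scr{N}$ with $|I|\in[\eta(k)]$ and every $\tau\in\prod_{n\in I}[2^n]$, the triple $\big(\bigcap_{n\in I}S_{n,\tau(n)},\,I,\,2^{-k}(\eta(k)/|I|)^{\alpha(k)}\big)$. When $|I|=\eta(k)$ this weight is exactly $2^{-k}$, and the associated set is simultaneously $\psi$-small and Lebesgue-co-small provided $I$ is pushed far enough to the right; an appropriate $\limsup$ over $k$ of such sets should then give the required Borel set.

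Concretely, first I would fix for each $k\in\scr{N}$ the block $I_k=\{k+1,k+2,\dots,k+\eta(k)\}$, choose any $\tau_k\in\prod_{n\in I_k}[2^n]$, and put $X_k=\bigcap_{n\in I_k}S_{n,\tau_k(n)}\in\scr{T}$. Since $(X_k,I_k,2^{-k})\in\cD_k\subseteq\cD$ and the one-element set $\{(X_k,I_k,2^{-k})\}$ already covers $X_k$, Definition \ref{generalsub} gives $\nu(X_k)\le\psi(X_k)\le 2^{-k}$. On the other hand, writing $\lambda$ for Lebesgue measure on $\cT$ (the product of the uniform measures on the $[2^n]$, under which the coordinate projections are independent), we have $\lambda(S_{n,\tau})=1-2^{-n}$, so
\[
\lambda(\cT\setminus X_k)=1-\prod_{n\in I_k}(1-2^{-n})\le\sum_{n\ge k+1}2^{-n}=2^{-k}.
\]

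Then I would set $A=\limsup_k X_k=\bigcap_{N}\bigcup_{k\ge N}X_k$, a Borel subset of $\cT$. Using $\sigma$-subadditivity of the extension of $\nu$ from (\ref{100}), for every $N$ we get $\nu(A)\le\nu\big(\bigcup_{k\ge N}X_k\big)\le\sum_{k\ge N}2^{-k}=2^{1-N}$, hence $\nu(A)=0$, i.e.\ $A\in\pathh$. For the complement, $\cT\setminus A=\bigcup_N\bigcap_{k\ge N}(\cT\setminus X_k)$ is an increasing union with $\lambda\big(\bigcap_{k\ge N}(\cT\setminus X_k)\big)\le\lambda(\cT\setminus X_N)\le 2^{-N}$, so continuity from below of $\lambda$ yields $\lambda(\cT\setminus A)=0$, i.e.\ $\cT\setminus A\in\nulli$.

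The only delicate points are (i) that the bound $\psi(X_k)\le 2^{-k}$ is exactly the one-element-cover instance of the definition of $\phi_\cD$, so that the larger weights $2^{-k}(\eta(k)/|I|)^{\alpha(k)}$ attached to smaller blocks play no role, and (ii) that the chain $\nu=\nu_1\le\psi$ lets the estimate pass from $\psi$ to $\nu$ and then, via (\ref{100}), to the Borel extension of $\nu$ (where $\sigma$-subadditivity is exactly what is needed for the $\limsup$). I do not expect a genuine obstacle here: the argument is essentially bookkeeping, and it in fact gives slightly more than the statement asks, since it pins down the rates and is uniform in the choice of ultrafilter defining $\nu$. Alternatively one could quote Christensen's general principle that a pathological submeasure is mutually singular with respect to every measure, but for $\nu$ the explicit witnesses $X_k$ are the most transparent route.
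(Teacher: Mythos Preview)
Your argument is correct. The paper does not supply its own proof of this fact; it simply records it as a citation to Christensen's theorem, which is a general result that any pathological submeasure is mutually singular with respect to every measure. Your approach is genuinely different in that it bypasses Christensen's abstract machinery and instead reads an explicit Borel witness directly off the covering class $\cD$: the sets $X_k$ are by construction $\psi$-small (one-element covers of weight $2^{-k}$) and Lebesgue-co-small (by the product formula and the choice $I_k\subseteq\{k+1,k+2,\dots\}$), and the $\limsup$ glues these together using only $\sigma$-subadditivity of the extended $\nu$ and continuity of $\lambda$. What your route buys is an explicit witness that is uniform in the choice of ultrafilter defining $\nu$ --- a feature that dovetails with the absoluteness argument of Proposition~\ref{1001124} and the use made of Fact~\ref{dualnull} in the proof of Theorem~\ref{mainnow}. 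What the citation buys is brevity and the general statement (any pathological submeasure, not just $\nu$), which is the formulation the paper actually needs since it is only the duality of $\pathh$ and $\nulli$ that matters downstream.
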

Fact \ref{dualnull} is saying that the ideals $\pathh$ and $\nulli$ are \emph{dual}, according to \cite{kunen84rand}.
\begin{prop}\label{1001124} If $c\in \bc\cap M$ then $\nu^M(A_c^M)\geq \nu^N(A_c^N)$. In particular for any $c\in \bc\cap M$, if $A_c\cap M \in\pathh_M$ then $A_c\cap N\in \pathh_N$.
\end{prop}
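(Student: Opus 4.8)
The plan is to trace Talagrand's construction through the two models and verify that everything in it is absolute between $M$ and $N$ \emph{except} at the single point where the ultrafilter enters; the hypothesis $\cU\sub\cV$ is exactly what is needed to control that one ingredient, and it yields $\nu^N\le\nu^M$ on $\scr{T}$, after which absoluteness carries this from clopen sets to the Borel set $A_c$. For the absoluteness: the space $\cT=\prod_n[2^n]$ and its clopen algebra $\scr{T}$, the finite algebras $\cA_n$, $\cB_n$, the projections $\pi_{[s]}$, the clopen sets $S_{n,\tau}$, and the constants $\eta(k)$, $\alpha(k)$ together with each weight $2^{-k}(\eta(k)/|I|)^{\alpha(k)}$ (a specific definable real), are coded by hereditarily finite objects and particular definable reals, hence are computed identically in $M$ and $N$. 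A normalised submeasure on $\scr{T}$ is a countable object, and for such a $\mu$ the relation ``$X$ is $(m,n,\mu)$-thin'', hence ``$X$ is $(I,\mu)$-thin'', is obtained by quantifying over the finite sets $\cA_m,\cB_n$ and the absolute atomic conditions $B\sub A$, $B\cap X=\emptyset$, $\mu(\pi_A^{-1}[B])\ge 1$, so it is absolute in the parameters $m,n,I,X,\mu\restriction\scr{T}$; likewise, for $Y\sub\cM$ the value $\phi_Y(B)$ for $B\in\scr{T}$ is the infimum of the finite sums $w(Y')$ over finite $Y'\sub Y$ covering $B$, and infima of sets of reals and finite sums of reals are absolute, so $\phi_Y\restriction\scr{T}$ is absolutely computable from $Y$. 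Running the downward recursion on $k$: $\cC_{p,p}=\cD$ is given by arithmetic conditions on triples $(X,I,w)$, so $\cD^M=\cD^N$; and if $\cC_{k+1,p}^M=\cC_{k+1,p}^N$ then $\psi_{k+1,p}^M=\psi_{k+1,p}^N$ on $\scr{T}$, whence $\cE_{k,p}^M=\cE_{k,p}^N$ and $\cC_{k,p}^M=\cC_{k,p}^N$. Thus $\cD$, every $\cC_{k,p}$ and every $\psi_{k,p}\restriction\scr{T}$ agree in $M$ and $N$.

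The ultrafilter enters only through the $\cE_k$. For $x\in\cM$ carrying one of the weights used in the construction, the set $\{p:x\in\cC_{k,p}\}$ is, by the previous paragraph, the same subset of $\scr{N}$ in $M$ and in $N$, and it lies in $M$; hence $x\in\cE_k^M$ implies $\{p:x\in\cC_{k,p}\}\in\cU\sub\cV$, i.e.\ $x\in\cE_k^N$. Thus $\cE_k^M\sub\cE_k^N$ for every $k$, and therefore
$$
\cC_k^M=\cD\cup\bigcup_{l\ge k}\cE_l^M\ \sub\ \cD\cup\bigcup_{l\ge k}\cE_l^N=\cC_k^N .
$$
Since enlarging the covering family in Definition \ref{generalsub} can only decrease the infimum, we get $\nu_k^N(B)\le\nu_k^M(B)$ for every $B\in\scr{T}$; in particular $\nu^N\le\nu^M$ on $\scr{T}$.

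Now let $c\in\bc\cap M$ and work with the $\sigma$-subadditive extension (\ref{100}). Fix any sequence $(A_i)_i\in M$ of clopen sets with $A_c^M\sub\bigcup_i A_i$ in $M$. The open set $\bigcup_iA_i$ has a Borel code in $M$, and ``$A_c\setminus\bigcup_iA_i=\emptyset$'' is a $\Pi^1_1$ statement with parameters in $M$; since $M$ is a transitive model of $\zfc$ contained in $N$, it is absolute, so $A_c^N\sub\bigcup_iA_i$ in $N$. As $\nu^N\le\nu^M$ on $\scr{T}$ and countable sums of nonnegative reals are absolute, $\sum_i\nu^M(A_i)\ge\sum_i\nu^N(A_i)\ge\nu^N(A_c^N)$. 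Taking the infimum over all such sequences $(A_i)_i$ gives $\nu^M(A_c^M)\ge\nu^N(A_c^N)$. The second assertion follows at once: if $A_c\cap M\in\pathh_M$, i.e.\ $\nu^M(A_c^M)=0$, then $0\le\nu^N(A_c^N)\le 0$, so $A_c\cap N\in\pathh_N$.

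The step I expect to be most delicate is the first one: one must check carefully that every ingredient of Talagrand's downward-inductive definition --- the clopen algebra and the finite structures built over it, the relation ``$(m,n,\mu)$-thin'', the operator $\phi_Y$, and the recursion on $k$ from $p$ down to $1$ --- involves only hereditarily countable data and arithmetic (or Borel) conditions, so that the ultrafilter really is the only non-absolute ingredient. Once that is in place, the hypothesis $\cU\sub\cV$ is precisely what is needed, and everything afterwards is routine absoluteness.
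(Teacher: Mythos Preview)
Your proof is correct and follows the same strategy as the paper: downward induction on $k$ to show each $\cC_{k,p}$ and $\psi_{k,p}$ is absolute between $M$ and $N$, then $\cU\subseteq\cV$ for the single non-absolute step at $\cE_k$. Two minor differences worth noting: the paper is explicit about the fact that a clopen set $X\subseteq\cT$ is not literally the same object in $M$ and $N$ (since $\cT^M\subsetneq\cT^N$), writing $\cD^M=\{(A\cap M,I,w):(A,I,w)\in\cD\}$ rather than your ``$\cD^M=\cD^N$'' (which is fine once clopen sets are identified with their hereditarily finite codes, as you do); conversely, you supply the passage from $\nu^N\le\nu^M$ on $\scr{T}$ to $\nu^M(A_c^M)\ge\nu^N(A_c^N)$ for Borel codes $c$ via $\Pi^1_1$-absoluteness of the covering relation in (\ref{100}), a step the paper's proof leaves entirely implicit.
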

Assuming this for now, we have the following.
\begin{proof}[Proof of Theorem \ref{mainnow}] By Fact \ref{dualnull}, we can find $c,d\in \bc\cap M$ such that that $A_c\cap M\in \nulli_M$ and $A_d\cap M = \cT\setminus A_c\cap M\in \pathh_M$. Let $G$ be a $\pathh$-generic filter over $M$. In $M[G]$, if $f\in R(\bc\cap M)$ then $f\not \not\in A_c$ so that $R(\bc\cap M)\subseteq A_d$. But by Proposition \ref{1001124} we know that, since $A_d\cap M\in \pathh_M$, for any appropriate $\cV$ we have $\nu_\cV^{M[G]}(A_d\cap M[G]) = 0$. 
\end{proof}

\begin{proof}[Proof of Proposition \ref{1001124}] This proof follows the same induction argument as in \cite[Proposition N]{frem08}. Let $\cT^*$\index{$\cT^*$} be the collection $\bigcup_{I\in [\scr{N}]^{<\omega}}\prod_{n\in I}[2^n]$. Let $\phi_1(f,\tau)$ be the formula
$$
\tau\in \cT^*\wedge f\in \cT \wedge (\forall n\in \mathrm{dom}(\tau))(f(n)\neq \tau(n)))
$$
Of course 
$$
f\in \bigcap_{n\in \mathrm{dom}(\tau)}S_{n,\tau(n)} \leftrightarrow \phi_1(f,\tau).
$$
Since ${\cT^*}^M = \cT^*$ and $\cT^M = \cT\cap M$, we have
$$
(\forall \tau)(\forall f\in M)(\phi_1(f,\tau)\leftrightarrow \phi_1^M(f,\tau)).
$$
So if $\tau \in \cT^*$
\begin{equation}\label{2307125}
(\bigcap_{n\in I}S_{n,\tau(n)})^M = \{f:\phi_1(f,\tau)\}^M = \{f\in M: \phi_1^M(f,\tau)\} = \bigcap_{n\in I}S_{n,\tau(n)}\cap M.
\end{equation}
Let $\phi_2(x)$ be the formula
\begin{eqnarray*}
\mbox{$x$ is a function}\wedge \mathrm{dom}(x) = 3 &\wedge& (\exists \tau \in \cT^*)(\exists k)(x(0) = \bigcap_{n\in \mathrm{dom}(\tau)}S_{n,\tau(n)}\\&\wedge& x(1) = |\tau| \wedge x(2) = \left(\frac{\eta(k)}{|\tau|}\right)^{\alpha(k)}).
\end{eqnarray*}
Of course
$$
\phi_2(X)\leftrightarrow X\in \cD.
$$
By this and (\ref{2307125}) we see that 
\begin{equation}\label{2307126}
\cD^M = \{(A\cap M,I,w):(A,I,w)\in \cD\}.
\end{equation}
Note that the sequences $(\eta(k))_{k\in\scr{N}}$ and $(\alpha(k))_{k\in\scr{N}}$ are in $M$.\\\\
Now we proceed by downwards induction. Let $[k,p]$ be the statement that 
$$
(\cC_{k,p}^M = \{(A^M,I,w):(A^M,I,w)\in \cC_{k,p}\}) \wedge (\forall A\in \scr{T})(\psi_{\cC_{k,p}}^M(A^M) = \psi_{\cC_{k,p}}^N(A^N)).
$$
We show that for each $k\leq p$ the statement $[k,p]$ holds. First we show that $\psi_\cD^M(A^M) = \psi_\cD^N(A^N)$, this along with (\ref{2307126}) will prove $[p,p]$. Suppose $\psi_\cD^M(A^M)<\eta$, for some $\eta\in\scr{Q}_{>0}$. Then we can find $\{(X_i\cap M,I_i,w_i):i\in [N]\}\subseteq \cD^M$ such that $A^M = A\cap M\subseteq \bigcup_{i\in [N]}X_i\cap M = (\bigcup_{i\in [N]}X_i)^M$ and $\sum_{i\in [N]}w_i<\eta$. Thus $\{(X_i\cap N,I_i,w_i):i\in I\}\subseteq \cD^N$ witnesses $\psi_\cD^N(A)<\eta$. The other direction is the same but just using the fact that if $\{(X_i\cap N,I_i,w_i):i\in I\}\subseteq \cD^N$ then $\{(X_i\cap M,I_i,w_i):i\in I\}\subseteq \cD^M$.\\\\
Suppose now for some $[k+1,p]$ holds. By $[k+1,p]$, for every $s\in \cT^*$ and $B\in \scr{T}$, we have
$$
\psi_{k+1,p}^M((\pi^{-1}_{[s]}(B))^M) = \psi_{k+1,p}^N((\pi^{-1}_{[s]}(B))^N),
$$
from which it follows that
\begin{center}
$(\forall X \in \scr{T})($ $X\cap M$ is $(I,\psi_{k+1,p}^M)$-thin if and only if $X\cap N$ is $(I,\psi_{k+1,p}^N)$-thin).
\end{center}
From this, arguing as in the case for $[p,p]$, we obtain $[k,p]$.\\\\
Finally, since $\cU\subseteq \cV$, we have for each $k\in \scr{N}$:
\begin{center}
\emph{If $(X\cap M,I,w)\in \cE_{k}^M$ then $(X\cap N,I,w)\in \cE_{k}^N$,}
\end{center}
where of course $\cE_{k}^M = \{(X,I,w):\{p:(X,I,p)\in \cC_{k,p}^M\}\in\cU\}$ and $\cE_{k}^N = \{(X,I,w):\{p:(X,I,p)\in \cC_{k,p}^N\}\in\cV\}$. This completes the proof.
\end{proof}An ideal $\cI$ on $\mathrm{Borel}(\cT)$ is called \emph{analytic on $G_\delta$} if, for every $G_\delta$ set $A\subseteq \cT\times \cT$, the set $\{x:A_x\in \cI\}$ is analytic. By Proposition \ref{gdelta}, below, $\pathh$ is analytic on $G_\delta$. The following now follows from \cite{farahzap04}. \begin{thm}\label{groundrealz} $\pathh\force \check{\cT} \in \dot{\meag}\cap \dot{\nulli}$.
\end{thm}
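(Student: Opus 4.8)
The plan is to obtain both clauses from a single symmetry argument, with Proposition~\ref{gdelta} doing the set-up. By that proposition $\pathh$ is analytic on $G_\delta$, which places the quotient $P:=\mathrm{Borel}(\cT)/\pathh$ inside the idealized-forcing machinery of \cite{farahzap04} and of Zapletal's general theory; since $\nu$ extends to a strictly positive continuous submeasure on $P$, this $P$ is a ccc Maharam algebra (in particular atomless, and $\nu$ annihilates singletons), so as in the general theory the $P$-generic object may be identified with a point $r\in\cT$ which, in the extension, lies in no Borel set that is coded in the ground model and belongs to $\pathh$. (More exactly, $r\in B$ iff $[B]_\pathh$ belongs to the generic filter, and $[B]_\pathh=0$ whenever $B\in\pathh$.)

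The extra ingredient is that $\nu$ is invariant under the natural compact group structure on $\cT\cong\prod_n\mathbb{Z}/2^n\mathbb{Z}$. A translation by $g$ sends $S_{n,\tau}$ to $S_{n,\tau+g(n)}$, hence permutes the sets $\bigcap_{n\in I}S_{n,\tau(n)}$ figuring in $\cD$, permutes each $\cA_m$, restricts to an automorphism of each $\cB_m$, and is compatible with the projections $\pi_{[s]}$ (one has $\pi_{[s]+g}^{-1}[B+g]=\pi_{[s]}^{-1}[B]+g$); since every weight in $\cD_k$ and $\cE_{k,p}$ depends only on $|I|$, a routine downward induction through Talagrand's construction shows that each $\psi_{k,p}$, each $\cE_{k,p}$ (hence $\cC_{k,p}$), each $\cE_k$ (hence $\cC_k$), and finally $\nu=\nu_1$ are translation-invariant. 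In particular $\pathh$ is a translation-invariant $\sigma$-ideal.

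Now fix two Borel sets dual to $\pathh$. By Christensen's theorem (Fact~\ref{dualnull}) there is a Borel $A\in\pathh$ with $\cT\setminus A\in\nulli$. Also, since $\nu$ is diffuse, basic clopen sets of large length have arbitrarily small $\nu$-measure, so for each $n$ one builds a dense open $U_n$ (a union of deep basic clopen sets) with $\nu(U_n)\leq 2^{-n}$; then $A':=\bigcap_n U_n\in\pathh$ has complement $\bigcup_n(\cT\setminus U_n)$ meagre. Work in a generic extension $M[G]$. For every $g\in\cT\cap M$ the sets $A+g$ and $A'+g$ are coded in $M$ and, by translation-invariance of $\pathh$, lie in $\pathh$; hence $r\notin(A+g)^{M[G]}=A^{M[G]}+g$ and likewise for $A'$, which rearranges to $g\in r-(\cT\setminus A)^{M[G]}$ and $g\in r-(\cT\setminus A')^{M[G]}$. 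Thus $\cT\cap M$ is contained in $r-(\cT\setminus A)^{M[G]}$, which is Lebesgue null in $M[G]$ because Haar measure on $\cT$ is translation- and inversion-invariant, and in $r-(\cT\setminus A')^{M[G]}$, which is meagre in $M[G]$ because the meagre ideal on a topological group is translation-invariant. Hence $\check{\cT}\in\dot{\nulli}\cap\dot{\meag}$. (Alternatively, one checks that $P$ is $\omega^\omega$-bounding, adds a new real, and adds no random real --- the last point via Proposition~\ref{onpsi}, since $\nu\leq\psi$ --- and then both conclusions are instances of the preservation theorems of \cite{farahzap04}.)

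The step needing the most care is the passage to the generic point: that $\mathrm{Borel}(\cT)/\pathh$ acts on the reals of the extension in such a way that $r$ omits exactly the ground-model-coded members of $\pathh$ and is a genuine element of $\cT$. This is precisely what analyticity on $G_\delta$ (Proposition~\ref{gdelta}), inserted into the framework of \cite{farahzap04}, is there to guarantee; granting it, the remaining ingredients --- translation-invariance of $\nu$ and the construction of the two dual Borel sets --- are routine.
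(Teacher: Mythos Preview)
Your argument is correct and follows the same overall route as the paper's \emph{alternative} proof (the one avoiding \cite{farahzap04}): translation-invariance of $\pathh$, duality of $\pathh$ with $\nulli$ and with $\meag$, and then a Kunen-style translate argument showing $\check{\cT}$ lands inside a translate of the dual null (resp.\ meagre) set. The paper packages the last step by quoting \cite{kunen84rand} as a black box, whereas you reprove it by hand with the generic point $r$; and the paper quotes \cite[Proposition~N]{frem08} for invariance, whereas you sketch the downward induction yourself.

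There is one genuine difference worth noting. For duality with $\meag$, the paper (Lemma~\ref{1001127}) argues indirectly: if below some condition every meagre Borel subset were $\nu$-null, that condition would force a Cohen real, contradicting the fact that Maharam algebras add no Cohen reals; a maximal-antichain exhaustion then gives the comeagre $\nu$-null set. Your route is strictly more elementary: from diffuseness of $\nu$ (equivalently, $\nu([s])\to 0$ as the depth of $s$ grows) you build dense open $U_n$ with $\nu(U_n)\le 2^{-n}$ directly and take $A'=\bigcap_n U_n$. This bypasses the ``no Cohen reals'' input entirely. Conversely, the paper's argument shows something slightly stronger along the way---that below every positive Borel set there is a positive \emph{meagre} Borel subset---which your construction does not immediately yield.

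One cosmetic remark: your appeal to Proposition~\ref{gdelta} and the machinery of \cite{farahzap04} is not really needed for the argument you actually carry out. The only fact about idealized forcing you use is the existence of a generic point $r\in\cT$ avoiding every ground-model-coded member of $\pathh$; this holds for any ccc $\sigma$-ideal containing the singletons (see the paper's footnote citing \cite[Proposition~2.1.2]{zap08}) and does not require ``analytic on $G_\delta$''. That hypothesis is only relevant for the \emph{other} route---quoting the preservation theorems of \cite{farahzap04} directly---which you mention only parenthetically.
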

The following should be compared to \cite[Theorem 17.25]{kech95}.
\begin{prop}\label{gdelta} Let $\mu:\mathrm{Borel}(\cT)\rightarrow \scr{R}$ be a Maharam submeasure. Then for each Borel set $A\subseteq \cT\times \cT$, the map
$$
x\mapsto \mu(A_x)
$$
is Borel. In particular, by considering the preimage of $\{0\}$, the ideal $\mathrm{Null}(\mu)$ is analytic on $G_\delta$.
\end{prop}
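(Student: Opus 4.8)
The approach is to adapt the proof of \cite[Theorem 17.25]{kech95}, the only difference being that the finite additivity used there is unavailable for a submeasure; in its place I will use the monotone class theorem together with the fact that a Maharam (i.e. continuous) submeasure is continuous along \emph{all} convergent sequences, upward and downward --- this is the consequence of continuity recorded in Section \ref{prels}. Write $K=\cT$ and, for a Borel set $A\subseteq K\times K$, let $A_x=\{y\in K:(x,y)\in A\}$ denote the (Borel) vertical section. Let $\mathcal{D}$ be the family of all Borel $A\subseteq K\times K$ for which the map $x\mapsto\mu(A_x)$ is Borel. The goal is to show $\mathcal{D}=\mathrm{Borel}(K\times K)$.

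First I would verify that $\mathcal{D}$ contains the algebra $\cB$ of all clopen subsets of $K\times K$. For a clopen rectangle $U\times V$ we have $(U\times V)_x=V$ when $x\in U$ and $(U\times V)_x=\emptyset$ otherwise, so $x\mapsto\mu((U\times V)_x)=\mu(V)\cdot\mathbf{1}_U(x)$ is Borel (indeed continuous). Since $K$ is compact, metrizable and zero-dimensional, every clopen subset $A$ of $K\times K$ is a finite union $\bigcup_{i\le n}U_i\times V_i$ of clopen rectangles, and $A_x=\bigcup\{V_i:x\in U_i\}$ depends only on which cell of a fixed finite clopen partition of $K$ contains $x$; hence $x\mapsto\mu(A_x)$ is a simple Borel function. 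Thus $\cB\subseteq\mathcal{D}$, and $\cB$ generates $\mathrm{Borel}(K\times K)$.

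Next I would check that $\mathcal{D}$ is a monotone class. If $A_n\in\mathcal{D}$ with $A_n\uparrow A$, then for every $x$ we have $(A_n)_x\uparrow A_x$ in $\mathrm{Borel}(K)$, so $\limsup_n(A_n)_x=\liminf_n(A_n)_x=A_x$, and continuity of $\mu$ yields $\mu(A_x)=\lim_n\mu((A_n)_x)=\sup_n\mu((A_n)_x)$, a Borel function of $x$; the case $A_n\downarrow A$ is identical with $\inf_n$ in place of $\sup_n$. By the monotone class theorem, $\mathcal{D}\supseteq\sigma(\cB)=\mathrm{Borel}(K\times K)$, which is the first assertion. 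For the ``in particular'' clause, if $A\subseteq K\times K$ is $G_\delta$ (or any Borel set) then $\{x:A_x\in\mathrm{Null}(\mu)\}=\{x:\mu(A_x)=0\}$ is the preimage of $\{0\}$ under the Borel map $x\mapsto\mu(A_x)$, hence Borel and a fortiori analytic; so $\mathrm{Null}(\mu)$ is analytic on $G_\delta$.

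The only genuine obstacle is the one already flagged: because $\mu$ is not additive, there is no identity relating $\mu$ of a section of $A$ to $\mu$ of a section of its complement, so the usual argument that the relevant class is closed under complementation collapses and one cannot run a $\lambda$-system induction. Organising the transfinite step around a monotone class avoids complementation entirely, after which the whole burden falls on the two directions of continuity of a Maharam submeasure --- both of which, as noted in Section \ref{prels}, follow from the stated downward continuity by a one-line subadditivity estimate.
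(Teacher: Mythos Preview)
Your proposal is correct and follows essentially the same route as the paper: both arguments verify the claim on finite unions of clopen rectangles, use the two-sided continuity of a Maharam submeasure (as recorded in Section~\ref{prels}) to pass through monotone limits, and then invoke the Monotone Class Theorem to reach all Borel sets. The only cosmetic difference is that the paper phrases the base case as ``all open sets'' (handling clopens first and then taking increasing unions), whereas you apply the theorem directly to the algebra of clopen sets; your organisation is if anything slightly cleaner.
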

\begin{proof} Fix a Maharam submeasure $\mu$ on $\mathrm{Borel}(\cT)$. Given $A\in \mathrm{Borel}(\cT\times\cT)$, let $[A]$ be the statement:
$$
\mbox{The map $\cT\rightarrow \cT:x\mapsto \mu(A_x)$ is Borel}.
$$
We show that the collection of all $A\in \cP(\cT\times \cT)$ such that $[A]$ holds is closed under countable intersections of decreasing sequences and countable unions of increasing sequences and contains all open sets. By the Monotone Class Theorem (see \cite[Theorem 6B]{halmos50}) it follows that $[A]$ holds for each $A\in \mathrm{Borel}(\cT\times \cT)$. Indeed, let $(A_i)_{i\in\scr{N}}$ be a decreasing sequence such that $[A_i]$ holds for each $i$ and let $A = \bigcap_i A_i$. Let $f:\cT\rightarrow \scr{R}$ be the map $x\mapsto \mu(A_x)$ and, for each $n\in \scr{N}$, let $f_n:\cT\rightarrow \scr{R}$ be the map
$$
x\mapsto \mu((A_n)_x).
$$
By the monotonicity of $\mu$, we have that $f_1(x)\geq f_2(x)\geq \cdots$, and since $\mu$ is Maharam we have 
$$
f(x) = \lim_n f_n(x).
$$
Since each $f_n$ is Borel the map $f$ remains Borel (see \cite[Theorem 4.2.2]{dudley02}), and so we must have $[A]$. The same argument shows that, if $(A_i)_{i\in \scr{N}}$ is an increasing sequence such that $[A_i]$ holds for each $i$, then $[\bigcup_i A_i]$ also holds.
Let us now show that $[A]$ holds for each open set $A$ of $\cT\times \cT$.
If $A = \bigcup_{i\in [n]} [s_i]\times [t_i]\subseteq \cT\times \cT$, for some finite sequences $s_i$ and $t_i$, then for each $x\in \cT$ and function $\mu:\mathrm{Borel}(\cT)\rightarrow \scr{R}$ we have
$$
\mu(A_x) = \mu(\bigcup\{[t_i]: i\in [n]\wedge x\in [s_i]\}).
$$
From this it is straightforward to see that the map $x\mapsto \mu(A_x)$ is continuous (and so Borel). Now suppose $A$ is an open set in $\cT\times \cT$. Then we can find finite sequences $(s_i)_{i\in\scr{N}}$ and $(t_i)_{i\in \scr{N}}$ such that $A = \bigcup_{i\in\scr{N}}[s_i]\times [t_i]$. For each $n$, let $A_n = \bigcup_{i\in [n]} [s_i]\times [t_i]$. Then, by the above, we see that $[A]$ holds.
\end{proof}
Finally for this section let us remark that \cite{farahzap04} may be avoided in justifying Theorem \ref{groundrealz}. By \cite[Proposition N]{frem08}, the ideal $\pathh$ satisfies the following condition:
$$
(\forall f\in \cT)(\forall A\in \pathh)(A+f\in \pathh).
$$
This is called \emph{$0$-$1$-invariance} in \cite{kunen84rand}. Now we have the following.
\begin{fact}\emph{(\cite{kunen84rand})} If $\cI$ is a $0$-$1$-invariant ideal on $\cP$ that is dual to $\nulli$ then
$$
\mathrm{Borel}(\cT)/\cI\force \check{\cT}\in \dot{\nulli}.
$$
The above is still true if we replace $\nulli$ by $\meag$.\footnote{Note that the assumptions in this claim do not include the \emph{absoluteness} of $\cI$, as in \cite{kunen84rand}, but the proof still works.}
\end{fact}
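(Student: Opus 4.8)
The plan is to run the classical translation argument of \cite{kunen84rand}, isolating exactly which hypotheses it uses so that no absoluteness property of $\cI$ need be invoked. Set $\mathbb{B}=\mathrm{Borel}(\cT)/\cI$, and recall that $\cT=\prod_{n\in\scr{N}}[2^n]$ carries the natural compact abelian group structure (coordinatewise addition modulo $2^n$), with respect to which the Lebesgue measure underlying $\nulli$ is the Haar measure; hence $\nulli$, and likewise $\meag$, are invariant under every translation and under the inversion $y\mapsto -y$. Since $\cI$ is dual to $\nulli$, fix once and for all a Borel set $A$ with $A\in\cI$ and $\cT\setminus A\in\nulli$. Because $\cI$ is a $\sigma$-ideal of Borel subsets of the Polish space $\cT$ with $\cT\notin\cI$, forcing with $\mathbb{B}$ adjoins in the usual way an $\cI$-generic point $x_G\in\cT$; the only property of $x_G$ I will use is that $x_G\notin B$ for every Borel $B$ \emph{coded in the ground model} with $B\in\cI$ --- equivalently, $[B]=0_{\mathbb{B}}$, so $[B]$ does not belong to the generic filter. (That $x_G\notin V$ follows since $\cI$ contains all singletons: any $x\in A$ has $\{x\}\in\cI$, and $0$-$1$-invariance spreads this to every singleton.)

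The heart of the matter is then a single line. Work in $V[G]$ and let $f$ be an arbitrary element of $\check{\cT}$. By $0$-$1$-invariance $A+f\in\cI$, and $A+f$ is coded in the ground model, so $x_G\notin A+f$, i.e.\ $x_G-f\in\cT\setminus A$. Letting $f$ range over $\check{\cT}$ yields $\check{\cT}\sub\{x_G-b:b\in\cT\setminus A\}$. The right-hand side is the image of $\cT\setminus A$ under the homeomorphism $b\mapsto x_G-b$ (inversion followed by translation by $x_G$), so it is a Borel set coded in $V[G]$, and it lies in $\nulli$ because $\cT\setminus A$ does and the maps in question preserve $\nulli$. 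Thus $\mathbb{B}\force\check{\cT}\in\dot{\nulli}$.

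For the variant with $\meag$ in place of $\nulli$ the proof is the same sentence verbatim: $\meag$ too is translation- and inversion-invariant, so if $\cI$ is dual to $\meag$ --- witnessed by a Borel $A\in\cI$ with $\cT\setminus A$ meagre --- the same inclusion exhibits $\check{\cT}$ as a subset of a meagre Borel set coded in $V[G]$, whence $\mathbb{B}\force\check{\cT}\in\dot{\meag}$. In the application $\cI=\pathh$ both duality hypotheses hold: Christensen's set in Fact \ref{dualnull} may be taken to be a dense $G_\delta$, and its complement is then simultaneously Lebesgue null and meagre.

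I do not expect a real obstacle; the two places that deserve care are (i) the standard existence of the $\cI$-generic point and the fact that it avoids every ground-model $\cI$-null Borel set --- this is exactly where one wants $\cI$ to be a $\sigma$-ideal and not merely an ideal --- and (ii) the bookkeeping that makes absoluteness of $\cI$ unnecessary, namely that the implication ``$B\in\cI\Rightarrow x_G\notin B$'' is used only for sets $B=A+f$ that are $\cI$-null \emph{as computed in $V$}; the sole absoluteness invoked is the (elementary) absoluteness of ``this Borel code codes a Lebesgue-null (resp.\ meagre) set''.
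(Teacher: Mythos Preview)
The paper does not supply its own proof of this Fact; it is simply cited from \cite{kunen84rand} with the footnote that Kunen's argument runs without the absoluteness hypothesis on $\cI$. Your write-up is exactly that argument, and you have correctly isolated the two points the footnote alludes to: $0$-$1$-invariance is used only to ensure each translate $A+f$ (for $f$ in the ground model) is $\cI$-null \emph{in $V$}, so the generic point avoids it; and the only absoluteness invoked is that of ``this Borel code gives a null (resp.\ meagre) set''. Your caveat that one needs $\cI$ to be a $\sigma$-ideal for the generic point to exist and behave as claimed is apt, and is satisfied in the application $\cI=\pathh$. The closing remark about Christensen's set being a dense $G_\delta$ is extraneous to the Fact itself---the paper establishes duality with $\meag$ separately via Lemma \ref{1001127}---but it is harmless.
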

Theorem \ref{groundrealz} now follows by Fact \ref{dualnull} and the following.
\begin{lem}\label{1001127} For every $A\in \mathrm{Borel}(\cT)\setminus \pathh$ there exists $B\in (\mathrm{Borel}(\cT)\cap \meag)\setminus \pathh$ such that $B\subseteq A$. In particular, $\meag$ and $\pathh$ are dual.
\end{lem}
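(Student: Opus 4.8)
The plan is to reduce everything to the construction of a single \emph{meagre} Borel set $B_0\subseteq\cT$ with $\nu(\cT\setminus B_0)=0$. Granting such a $B_0$, both parts of the lemma are immediate: for the ``in particular'' clause, $G:=\cT\setminus B_0$ lies in $\pathh$ while $\cT\setminus G=B_0$ is meagre, so $\meag$ and $\pathh$ are dual; and for the main assertion, given any $A\in\mathrm{Borel}(\cT)\setminus\pathh$ put $B:=A\cap B_0$, which is Borel, is contained in $A$, and is meagre (being a subset of $B_0$), while subadditivity gives $\nu(A)\le\nu(A\cap B_0)+\nu(A\setminus B_0)\le\nu(B)+\nu(G)=\nu(B)$, so that $\nu(B)\ge\nu(A)>0$ and $B\notin\pathh$. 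Thus all the content lies in producing $B_0$, which I would do by imitating the classical decomposition of a Polish space into a meagre set and a null set, with $\nu$ in place of Lebesgue measure.

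The only property of $\nu$ needed is that it is small on small basic clopen sets, and for this I would first record two facts. (i) For every $x\in\cT$ one has $\nu(\{x\})=0$: fixing $k\in\scr{N}$, choose $I\in[\scr{N}]^{<\omega}$ with $|I|=\eta(k)$ and $\tau\in\prod_{n\in I}[2^n]$ with $\tau(n)\neq x(n)$ for all $n\in I$ (possible since $|[2^n]|=2^n\ge 2$); then $x\in X:=\bigcap_{n\in I}S_{n,\tau(n)}$ and $(X,I,2^{-k})\in\cD_k$, so $\nu(\{x\})\le\psi(\{x\})\le 2^{-k}$, and letting $k\to\infty$ gives $\nu(\{x\})=0$. (ii) For every $x\in\cT$ one has $\lim_{m}\nu([x\restriction[m]])=0$: the sets $[x\restriction[m]]$ form a decreasing sequence of Borel sets with $\bigcap_m[x\restriction[m]]=\{x\}$, so the Borel sets $[x\restriction[m]]\setminus\{x\}$ decrease to $\emptyset$; since $\nu$ restricted to $\mathrm{Borel}(\cT)$ is continuous, $\nu([x\restriction[m]]\setminus\{x\})\to 0$, and hence $\nu([x\restriction[m]])\le\nu([x\restriction[m]]\setminus\{x\})+\nu(\{x\})\to 0$ by (i).

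Now fix a countable dense set $D=\{x_i:i\in\scr{N}\}\subseteq\cT$ (for instance, the points eventually equal to $1$). For each $\epsilon>0$, use (ii) to pick $m_i$ with $\nu([x_i\restriction[m_i]])<\epsilon 2^{-i}$ and set $V_\epsilon=\bigcup_{i}[x_i\restriction[m_i]]$; then $V_\epsilon$ is open, contains $D$ hence is dense, and by $\sigma$-subadditivity of the extended $\nu$ we get $\nu(V_\epsilon)\le\sum_i\nu([x_i\restriction[m_i]])<\epsilon$. Put $G=\bigcap_{n\in\scr{N}}V_{1/n}$: this is a dense $G_\delta$, so its complement $B_0:=\cT\setminus G=\bigcup_n(\cT\setminus V_{1/n})$ is a meagre $F_\sigma$ Borel set, and $\nu(G)\le\inf_n\nu(V_{1/n})=0$. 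Thus $B_0$ is the required set, and the lemma follows as in the first paragraph.

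The one genuinely non-formal point is step (ii): the observation that continuity of $\nu$ on $\mathrm{Borel}(\cT)$, together with the triviality of singletons, forces $\nu$ to vanish along shrinking clopen neighbourhoods. Everything after that is the standard meagre-versus-measure argument, and the remaining parts of the lemma are one-line consequences of subadditivity. (An alternative, slightly longer route to the ``in particular'' clause would take a maximal antichain of meagre Borel sets in the Maharam algebra $\mathrm{Borel}(\cT)/\pathh$, which is ccc, and use the main assertion to show that its union is co-null; but the direct construction of $B_0$ above makes this detour unnecessary.)
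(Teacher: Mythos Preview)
Your argument is correct, and it takes a genuinely different route from the paper's. You build a single meagre $F_\sigma$ set $B_0$ whose complement is $\nu$-null, using only that $\nu$ is continuous on $\mathrm{Borel}(\cT)$ and $\sigma$-subadditive (both of which the paper records just before the lemma); the main assertion then falls out by intersecting with $B_0$, and duality is immediate. The paper instead proves the main assertion first, by contradiction via forcing: assuming some $A\notin\pathh$ has every meagre Borel subset in $\pathh$, it argues that the condition $A$ forces the canonical generic real to be Cohen, contradicting the fact that no Maharam algebra adds a Cohen real. Only afterwards does it deduce duality, via exactly the ccc maximal-antichain construction you sketch in your parenthetical remark. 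Your approach is more elementary and self-contained (no forcing, no appeal to the Cohen-real theorem), and it yields a single uniform witness $B_0$ working for all $A$ at once. The paper's approach, on the other hand, isolates the abstract reason---``Maharam algebras do not add Cohen reals''---and would transfer to any ccc $\sigma$-ideal $\cI$ on $\mathrm{Borel}(\cT)$ for which $\mathrm{Borel}(\cT)/\cI$ is a Maharam algebra, without needing to compute anything about the submeasure on basic clopen sets.
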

\begin{proof} Suppose that for some $A\in \mathrm{Borel}(\cT)\setminus\pathh$ we have $\mathrm{Borel}(\cT)\cap\meag\cap \cP(A)\subseteq \pathh$. Let $\dot{r}$ be a name such that 
\begin{equation}\label{1101121}
\pathh\force \mbox{$(\forall c\in \check{\bc})(A_c\cap \check{\cT}\in \check{\pathh}\rightarrow \dot{r}\not\in {A_c})$}.\footnote{See \cite[Proposition 2.1.2]{zap08}.}
\end{equation}
We claim that 
\begin{equation}\label{2307127}
A\force \mbox{`$\dot{r}$ is a Cohen real'}.
\end{equation}
If not then for some $B\subseteq A$ and some $c\in \bc$ with $A_c\in \meag$ we have $B\force \mbox{$\dot{r}\in A_c$}$. If $d\in \bc$ is such that $B = A_d$ then $B\force \mbox{$\dot{r}\in A_d\cap A_c$}$. Let $e\in \bc$ be such that $A_e = A_c\cap A_d$. But then 
$$
A_c\cap A_d\in \mathrm{Borel}(\cT)\cap \meag\cap \cP(A)\subseteq \pathh.
$$ In particular $B\force \mbox{$A_e\cap \check{T}\in \check{\pathh} \wedge \dot{r}\in A_e$}$, which contradicts (\ref{1101121}). Thus (\ref{2307127}) holds which contradicts the fact that a Maharam algebra cannot add a Cohen real.\\\\
Now use the above to find, for each $A\in \mathrm{Borel}(\cT)\setminus \pathh$, a meagre Borel set $\Gamma(A)\not\in \pathh$ such that $\Gamma(A)\subseteq A$. Let $B_1 = \Gamma(\cT)$. If $B_\beta$ for $\beta<\alpha<\omega_1$ has been constructed let
$$
B_{\alpha} = \left\{\begin{array}{cl} \Gamma(\cT\setminus (\bigcup_{\beta<\alpha}B_\beta)),\quad&\mbox{if $\cT\setminus (\bigcup_{\beta<\alpha}B_\beta)\not\in \pathh,$}\\
\emptyset,\quad&\mbox{otherwise.}\end{array}\right.
$$
Since $\mathrm{Borel}(\cT)/\pathh$ is ccc, we know that $B := \{B_\alpha:\alpha<\omega_1 \wedge B_\alpha\not\in \pathh\}$ is countable. Thus $\cT\setminus \bigcup B \in \pathh$ and $\bigcup B\in \meag$, since each $B_\alpha\in\meag$.\end{proof}
\section{Talagrand's $\psi$}\label{calcs}
Let $\lambda$ be the Lebesgue measure on $\mathrm{Clopen}(2^\omega)$. If $A\in \mathrm{Clopen}(2^\omega)$ then we know that for some $n\in\omega$ we have
$$
\lambda(A) = |\{s\in \func{n}{2}:[s]\subseteq A\}|\cdot 2^{-n}.
$$
A similarly explicit description for Talagrand's submeasure would be very useful. To this end, we investigate the first (pathological) submeasure constructed in \cite{tal08}, this is the submeasure denoted by $\psi$ in Section \ref{prels} (and in \cite{tal08}). We remark that in \cite{tal08} the value $\eta(k)$ was set to $2^{2k+10}2^{(k+5)^4}(2^3+2^{k+5}2^{(k+4)^4})$. As pointed out in \cite{tal08} anything larger will do, so for simplicity we take the value \index{$\eta(k)$|}
$$
\eta(k) = 2^{2500k^4}.
$$ 
In Subsection \ref{calcs1} we prove the following:
\begin{thm}\label{thecalcs} We have $\psi(\cT) = \eta(1)^{\alpha(1)} = 2^{\frac{2500}{216}}$ and 
$$
\psi([s]) =  \min\{2^{-\delta(|I|)+1},2^{-\delta(|I|)}\left(\frac{\eta(\delta(|I|))}{|I|}\right)^{\alpha(\delta(|I|))}\},
$$
where $s\in \prod_{i\in I}[2^i]$, for some finite $I\subseteq\scr{N}$, and $\delta(m) = \min\{n\in\scr{N}:\eta(n)\geq m\}$.
\end{thm}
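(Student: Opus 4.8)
The plan is to establish each equality by exhibiting an explicit cover (for ``$\le$'') and then showing no cover does better (for ``$\ge$''), using throughout that $\psi(B)=\phi_\cD(B)$ equals the least value of $w(Y')$ over finite $Y'\sub\cD$ with $B\sub\bigcup Y'$ (a least value, not merely an infimum, once one discards redundant members), and that every member of $\cD$ has the form $\bigl(\bigcap_{n\in J}S_{n,\tau(n)},\,J,\,2^{-k}(\eta(k)/|J|)^{\alpha(k)}\bigr)$ with $1\le|J|\le\eta(k)$. A technical fact used repeatedly: because $\eta(k)=2^{2500k^4}$ and $\alpha(k)=(k+5)^{-3}$, the map $k\mapsto 2^{-k}(\eta(k)/\ell)^{\alpha(k)}$ is increasing for $k\ge\delta(\ell)$ and $2^{-k}(\eta(k)/\ell)^{\alpha(k)}\ge 2^{-1}(\eta(1)/\ell)^{\alpha(1)}$ for every valid $k,\ell$ — so the cheapest set of shape $\bigcap_{n\in J}S_{n,\tau(n)}$ with $|J|=\ell$ fixed is at level $\delta(\ell)$, and if one merely needs $|J|\le\ell$ one may instead use level $\delta(\ell)-1$ with $|J|=\eta(\delta(\ell)-1)$, of weight $2^{-\delta(\ell)+1}$ (available when $\delta(\ell)\ge 2$).

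\emph{The value at $\cT$.} For ``$\le$'': $S_{1,1}$ and $S_{1,2}$ cover $\cT$ (as $[2^1]=\{1,2\}$) and are singleton-indexed members of $\cD_1$ of weight $2^{-1}\eta(1)^{\alpha(1)}$ each, so $\psi(\cT)\le\eta(1)^{\alpha(1)}$. For ``$\ge$'', let $\{(X_j,C_j,w_j)\}_{j\in F}\sub\cD$ cover $\cT$ with $X_j=\bigcap_{n\in C_j}S_{n,\tau_j(n)}$; by the technical fact $w_j\ge 2^{-1}\eta(1)^{\alpha(1)}|C_j|^{-\alpha(1)}$, so it suffices to show $\sum_{j\in F}|C_j|^{-\alpha(1)}\ge 2$. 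For covers with a common support $C_0$ this reduces to $(|C_0|+1)|C_0|^{-\alpha(1)}\ge 2$, which holds because that expression equals $2$ at $|C_0|=1$ and is increasing on $|C_0|\ge 1$, the factor $|C_0|+1$ being the least number of avoidance-cylinders supported on $C_0$ that can cover $\prod_{n\in C_0}[2^n]$ — with only $|C_0|$ such sets, a system of distinct representatives yields a point agreeing with each $\tau_j$ somewhere. The general case is obtained by reducing to the common-support one. Hence $\sum_j w_j\ge\eta(1)^{\alpha(1)}$.

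\emph{The value at $[s]$.} Write $m=|I|$. The structural observation is that a single $(X_\tau,J,w)\in\cD$ has $[s]\sub X_\tau$ exactly when $J\sub I$ and $\tau(n)\ne s(n)$ for all $n\in J$ (otherwise some $f\in[s]$ realises $\tau(n)$ at an $n\in J\setminus I$). Minimising $2^{-k}(\eta(k)/|J|)^{\alpha(k)}$ over $1\le|J|\le\min(m,\eta(k))$ and $k\in\scr{N}$, using the technical fact, produces exactly $\min\{2^{-\delta(m)+1},\,2^{-\delta(m)}(\eta(\delta(m))/m)^{\alpha(\delta(m))}\}$ — the two terms coming from the two options above — which is the ``$\le$'' bound. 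For ``$\ge$'', take an irredundant cover $\{(X_j,J_j,w_j)\}_{j\in F}$ of $[s]$ and discard those $j$ with $X_j\cap[s]=\emptyset$, so that each $\tau_j$ avoids $s$ on $J_j\cap I$; then the retraction $\pi_{[s]}$ of Definition \ref{274} sends the cover to a cover of the fibre $\{f:f\res I=s\}\cong\prod_{n\notin I}[2^n]$ by the avoidance-cylinders of the $\tau_j\res(J_j\setminus I)$. If some $J_j\sub I$, that $X_j$ already covers $[s]$ and $w_j$ is at least the single-set minimum above. Otherwise every $J_j\setminus I\ne\emptyset$; writing $r_j=|J_j\setminus I|\ge 1$ and $|J_j|\le m+r_j$ one gets $w_j\ge\min\{2^{-\delta(m+r_j)+1},\,2^{-\delta(m+r_j)}(\eta(\delta(m+r_j))/(m+r_j))^{\alpha(\delta(m+r_j))}\}$, and since at least two such cylinders are needed to cover the fibre, and $\eta$ grows fast enough that $\delta(m+r)=\delta(m)$ for all $r$ up to a large threshold, $\sum_j w_j\ge 2\cdot 2^{-\delta(m)}\ge$ the single-set minimum; the finitely many awkward $m$ (those within a bounded distance of some $\eta(k)$) are treated directly.

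The step I expect to be the main obstacle is the lower bound for $\psi(\cT)$ — the claim that the two-set cover is globally optimal — since it demands the reduction of an arbitrary cover to a common-support one, the system-of-distinct-representatives bound of $|C_0|+1$ avoidance-cylinders per support, and the verification of the inequality $\sum_j|C_j|^{-\alpha(1)}\ge 2$; this last step is precisely where the super-exponential growth of $\eta$ and the cubic decay of $\alpha$ are indispensable (and is presumably why those functions are chosen as they are). The $[s]$ argument then reuses these ingredients together with $\pi_{[s]}$ and the bookkeeping around $\delta$.
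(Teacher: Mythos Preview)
Your overall strategy matches the paper's: exhibit explicit covers for the upper bounds, and for the lower bounds use a Hall/SDR argument to control the index sets in an arbitrary proper cover. The paper formalises the upper-bound witnesses as \emph{$N$-rectangles} for $\cT$ and as \emph{$(\cI,\tau,J)$-spikes} for $[s]$, and the key combinatorial lemma (their Lemma~6.7) is exactly your SDR observation, but stated for arbitrary supports: if $\{X_i:i\in I\}$ is a proper cover of $\cT$ then $|\bigcup_i X_i^{\mathrm{Ind}}|\le |I|-1$. For $\psi(\cT)$ your sketch is essentially correct; the only quibble is the phrase ``the general case is obtained by reducing to the common-support one'', which is not how the argument actually runs. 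What one does instead is apply the Hall bound directly to the heterogeneous cover to get $|C_j|\le M-1$ for every $j$ (where $M$ is the number of sets), and then your inequality $\sum_j|C_j|^{-\alpha(1)}\ge M(M-1)^{-\alpha(1)}\ge 2$ goes through.

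There is, however, a genuine gap in your $\psi([s])$ lower bound. You pass to the fibre via $\pi_{[s]}$ (this is the paper's Lemma~6.10) and then invoke only that ``at least two such cylinders are needed to cover the fibre'', concluding $\sum_j w_j\ge 2\cdot 2^{-\delta(m)}$. This fails: with $N$ sets in the cover one only knows $r_j=|J_j\setminus I|\le N-1$ (from the Hall bound on the fibre), and nothing prevents $N$ from being so large that $\delta(m+r_j)>\delta(m)$, in which case individual $w_j$ can drop below $2^{-\delta(m)}$. Your remark that ``$\delta(m+r)=\delta(m)$ for all $r$ up to a large threshold'' does not help, because $r_j$ is not bounded independently of the cover, and the ``awkward $m$'' clause addresses the wrong parameter --- the obstruction is large $N$, not the position of $m$. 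What rescues the argument is precisely that the \emph{number} of sets compensates: one needs the full bound $|J_j|\le N+m-1$ and then $\sum_j w_j\ge N\cdot w(N+m-1)$, after which a short case analysis on $\delta(N+m-1)$ (carried out in the paper's Proposition~6.12, using Inequalities~5 and~6) shows this dominates the weight of an $(\cI,\tau,\cI)$-spike. So the missing ingredient is to reuse the Hall bound on the fibre in full strength, not merely its $N\ge 2$ consequence.
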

In Subsection \ref{r7} we list the numerical inequalities that we shall need for Subsection \ref{calcs1}.
\subsection{Main calculations}\label{calcs1}
We begin with the following two definitions.
\begin{df} For $X\in\scr{T}$ we say that $X$ is a \eb{$\cD$-set}\index{D-set@$\cD$-set} if and only if for some (non-empty) finite set $I\subseteq \scr{N}$ and some $\tau\in \prod_{n\in I}[2^n]$ we have 
$$
X = \bigcap_{n\in I}\{y \in \cT:(\forall n\in I)(y(n)\neq \tau(n)\}) = \bigcap_{n\in I} S_{n,\tau(n)}.
$$
Since we can recover $I$ and $\tau$ from $X$ we allow ourselves to denote $I$ by $X^\ind$ and $\tau(n)$ by $X(n)$\index{$X^\ind$}\index{$X(n)$}. 
\end{df}
\begin{df} Let $A\subseteq \cT$, $X$ a collection of $\cD$-sets and $Y \in [\cD]^{<\omega}$. We say that $X$ (resp. $Y$) is a \bld{\emph{cover of $A$}} if and only if $A\subseteq \bigcup X$ (resp. $A\subseteq \bigcup Y$). We say that $X$ (resp. $Y$) is a \emph{\bld{proper cover of $A$}} if and only if it is a cover of $A$ and for any $X'\subsetneq X$ (resp. $Y'\subsetneq Y$)
$$
A\not\subseteq \bigcup X' \mbox{(resp. $A\not\subseteq \bigcup Y'$)}.
$$
\end{df}
Clearly then given $A\subseteq \cT$ we have
\begin{equation}\label{r7}
\psi(A) = \inf\{w(X):\mbox{$X\subseteq \cD$ and $X$ properly covers $A$}\}.
\end{equation}
Let us now measure $\cT$. The idea here is as follows. For each proper cover $X$ of $\cT$ we find another cover $Y$ of $\cT$ of lower weight, where the $Y$ here will have a very regular structure and so will have an easily calculable weight. Of course it will be sufficient to consider the infimum over all such regular structures. 
\begin{df}\label{34}
For any $n\in \scr{N}$ let
$$
w(n) = 2^{-\delta(n)}\left(\frac{\eta(\delta(n))}{n}\right)^{\alpha(\delta(n))}.
$$
If $X$ is a finite collection of $\cD$-sets then we will denote the \eb{weight}\index{weight of a $\cD$-set} of $X$ by
$$
w(X) = \sum_{Y\in X}w(|Y^\ind|).\footnote{We now are using the term \emph{weight} for $\cD$-sets and members of $\cD$, but with a slight variation in meaning.}
$$
\end{df}
By Inequality \ref{ineq1} from Subsection \ref{r7}, we see that if $X$ is a $\cD$-set then $w(|I(X)|)$ will be the least weight that we can possibly attach to it. Specifically, we will always have $(X,I(X),w(|I(X)|))\in \cD$ and, if $(X,I(X),w)\in\cD$ then $w\geq w(|I(X)|)$.\\\\
Here is the regular structure we mentioned above.
\begin{df}\label{r24} Let $X = \{X_i :i\in I\}$ be a collection of $\cD$-sets. We call $X$ an \emph{\bld{$N$-rectangle}}\index{rectangle@$N$-rectangle} for some integer $N\geq 2$ if and only if the following hold:
\begin{itemize}
\item $|I|= N$;  
\item $X_i^\ind = X_j^\ind$ for all $i,j\in I$;
\item $X_i(m) \neq X_j(m)$, whenever $i\neq j$ and $m\in X_i^\ind$;
\item $|X_i^\ind| = N-1$ for all (any) $i\in I$.
\end{itemize}
\end{df}
Notice that the weight of an $N$-rectangle is given by
\begin{equation}\label{rectanglesize}
N\cdot w(N-1).
\end{equation}
Rectangles give rise to proper covers of $\cT$:
\begin{lem}\label{17} If $X := \{X_i:i\in I\}$ is an $N$-rectangle then $X$ is a proper cover of $\cT$.
\end{lem}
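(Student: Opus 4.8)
The plan is to verify the two requirements in the definition of a proper cover separately, each by a short counting argument. Throughout, let $J$ denote the common value of $X_i^\ind$, which is well-defined by the second bullet of Definition \ref{r24}; by the fourth bullet $|J| = N-1$, while by the first bullet $|I| = N$.

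First I would show that $X$ is a cover of $\cT$, i.e. $\cT \subseteq \bigcup_{i\in I} X_i$. Suppose some $f\in\cT$ belongs to no $X_i$. Unwinding $X_i = \bigcap_{m\in J} S_{m,X_i(m)}$, one extracts for each $i\in I$ a coordinate $m_i\in J$ at which $f(m_i) = X_i(m_i)$. Since $|I| = N > N-1 = |J|$, the assignment $i\mapsto m_i$ must collapse two distinct indices $i\neq j$ to a common coordinate $m$, and then $X_i(m) = f(m) = X_j(m)$ contradicts the third bullet of Definition \ref{r24}.

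Second I would establish properness. It suffices to rule out covers of the form $X\setminus\{X_{i_0}\}$ for each fixed $i_0\in I$, since any proper subcollection of $X$ is contained in one of these, and a subcollection of a non-cover is again a non-cover. Now $I\setminus\{i_0\}$ and $J$ both have size $N-1$, so I would fix a bijection $\sigma : I\setminus\{i_0\}\to J$ and define $f\in\cT$ by $f(\sigma(i)) = X_i(\sigma(i))$ for $i\in I\setminus\{i_0\}$ and $f(n) = 1$ for $n\notin J$; injectivity of $\sigma$ makes this well-defined, and $X_i(\sigma(i))\in[2^{\sigma(i)}]$ so $f$ really is an element of $\cT$. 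Then for each $i\in I\setminus\{i_0\}$, $f$ matches the forbidden value of $X_i$ at the coordinate $\sigma(i)\in J$, hence $f\notin S_{\sigma(i),X_i(\sigma(i))}\supseteq X_i$, so $f\notin\bigcup(X\setminus\{X_{i_0}\})$.

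I do not expect a genuine obstacle: the whole argument is driven by the numerical relation that there are $N$ sets $X_i$ all sharing a single index set of size $N-1$. The only points that deserve a moment's care are the reduction of ``proper cover'' to the maximal subcollections $X\setminus\{X_{i_0}\}$, and the observation that it is precisely the third bullet of Definition \ref{r24} — distinctness of the forbidden values across the $N$ sets — that makes the pigeonhole collision in the covering step contradictory.
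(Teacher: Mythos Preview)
Your proposal is correct and follows essentially the same approach as the paper: the covering step is the identical pigeonhole argument, and for properness you build the same kind of witness by injectively assigning to each remaining $X_i$ a coordinate in the common index set and setting $f$ to the forbidden value there. The only cosmetic difference is that you reduce first to the maximal subcollections $X\setminus\{X_{i_0}\}$ and use a bijection, whereas the paper treats an arbitrary proper subset directly and builds the injection greedily; neither gains anything over the other.
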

\begin{proof} Assume that $x\in \cT\setminus \bigcup_i X_i$. Then for each $i$ we can find an $m_i\in X_i^\ind$ such that $x(m_i) = X_i(m_i)$. These $m_i$ must be distinct for if $i\neq j$ and $m:=m_i = m_j$, then $X_i(m) = x(m) = X_j(m)$, for some $i$, contradicting the third item from Definition \ref{r24}. But then $\{m_1,...,m_N\}\subseteq X_i^\ind$ a (cardinality) contradiction. To see that this cover is proper let $J$ be a non-empty strict subset of $\{1,2,...,N\}$. Then $|J|\leq N-1 = |X_i^\ind|$, for each $i\in J$. Enumerate 
$$
J = \{a_1,a_2,...,a_k\}.
$$
Inductively, choose $b_1\in X_{a_1}^\ind$, $b_2\in X_{a_2}^\ind\setminus\{b_2\}$, $b_3\in X_{a_3}^\ind\setminus\{b_1,b_2\},...,b_k\in X_{a_k}^\ind\setminus\{b_1,...,b_{k-1}\}$. Now define $y\in \prod_{i\in J}[2^{b_i}]$ by
$$
y_{i} = \left\{\begin{array}{cl} X_{a_i}(b_i),\quad&\mbox{if $i\in \{b_1,...,b_k\}$;}\\
1,\quad&\mbox{if $i\not\in J$.}\end{array}\right.
$$
and note that $y\in \cT\setminus \bigcup_{i\in J}X_i$.\end{proof}
Given a proper cover of $\cT$ we claim that we can find an $N$-rectangle of lower weight. 
Before we can demonstrate this we need one more claim.
\begin{lem}\label{30} Let $X = \{X_i:i\in I\}$ be a collection of $\cD$-sets that properly covers $\cT$. Then
$$
|\bigcup_{i\in I}X_i^\ind|\leq |I|-1.
$$
\end{lem}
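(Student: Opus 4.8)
The plan is to argue by contradiction: suppose $X = \{X_i : i \in I\}$ is a collection of $\cD$-sets that properly covers $\cT$ but $|\bigcup_{i\in I} X_i^\ind| \geq |I|$. The key point is that each $\cD$-set $X_i = \bigcap_{n\in X_i^\ind} S_{n, X_i(n)}$ is, in the coordinate sense, a ``forbidden-value'' condition: a point $x$ fails to lie in $X_i$ exactly when $x(n) = X_i(n)$ for \emph{some} $n \in X_i^\ind$. So to build a point $x \in \cT \setminus \bigcup_i X_i$ one must, for every $i$, pick some coordinate $n_i \in X_i^\ind$ and set $x(n_i) = X_i(n_i)$; the obstruction to doing this is only that one cannot consistently assign two different values to the same coordinate. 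I would set this up as a combinatorial selection problem and then use the size hypothesis to defeat it --- but actually it is cleaner to use the hypothesis in the form needed for properness of the cover.

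First I would invoke Lemma \ref{17}'s style of reasoning in reverse. Properness of the cover means that for each $i_0 \in I$, the subfamily $\{X_i : i \neq i_0\}$ fails to cover $\cT$, i.e.\ there is a point $x_{i_0}$ with $x_{i_0}(n) = X_i(n)$ for some $n \in X_i^\ind$ for every $i \neq i_0$, while $x_{i_0} \in X_{i_0}$. The plan is to extract from these witnesses a system of distinct representatives. Concretely, fix $i_0$ and the witness $x_{i_0}$; for each $i \neq i_0$ choose $n_i \in X_i^\ind$ with $x_{i_0}(n_i) = X_i(n_i)$. I claim the map $i \mapsto n_i$ (on $I \setminus \{i_0\}$) can be taken injective: if $n_i = n_j =: m$ for $i \neq j$, then $X_i(m) = x_{i_0}(m) = X_j(m)$. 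This is not immediately a contradiction for an arbitrary family, so the genuinely delicate part is to arrange that such collisions can be avoided, or to account for them. The honest route is: the collisions $X_i(m) = X_j(m)$ constrain the family, and properness (applied to every $i_0$, not just one) forces enough ``independence'' that a full injective selection $i \mapsto n_i$ on all of $I$ exists --- this would directly give $|I| = |\{n_i : i\in I\}| \leq |\bigcup_i X_i^\ind|$, i.e.\ the reverse of the desired inequality is the \emph{failure} of properness. Re-reading: we want $\leq |I| - 1$, so the argument should be that if $|\bigcup_i X_i^\ind| \geq |I|$ then a point outside $\bigcup_i X_i$ \emph{can} be built (Hall-type matching on the bipartite graph with parts $I$ and $\bigcup_i X_i^\ind$, edge $i \sim n$ iff $n \in X_i^\ind$), contradicting that $X$ covers $\cT$ at all --- wait, that needs a matching saturating $I$, which needs Hall's condition, not just the count. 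So the clean statement is: properness gives, for each $i_0$, a witness showing Hall's condition holds for $I \setminus \{i_0\}$, hence a matching of $I\setminus\{i_0\}$ into $\bigcup_i X_i^\ind$, hence $|I| - 1 \leq |\bigcup_{i \neq i_0} X_i^\ind| \leq |\bigcup_i X_i^\ind|$; but this is the wrong direction again.

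Let me state the plan I will actually execute. I would show the contrapositive of the \emph{cover} property from the size bound: assume $|\bigcup_{i\in I} X_i^\ind| \geq |I|$, build the bipartite graph $G$ with left vertices $I$, right vertices $\bigcup_i X_i^\ind$, and $i$ adjacent to all of $X_i^\ind$. The sets $X_i^\ind$ all have the same cardinality in a rectangle but here are arbitrary; what properness buys is exactly Hall's condition for $G$: for any nonempty $J \subseteq I$, properness applied to the subcover indexed by $J$ (each proper subcover of a proper cover is still ``tight'') forces $|\bigcup_{i\in J} X_i^\ind| \geq |J|$, since otherwise the argument of Lemma \ref{17}'s properness half produces a point avoiding all $X_i$, $i \in J$ --- but that only contradicts $\{X_i\}_{i\in J}$ covering, which we don't know. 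The resolution, and the step I expect to be the main obstacle, is to correctly identify what ``properly covers'' forces about proper subfamilies: a minimal cover has the property that removing any one set uncovers a point, and I will need the sharper combinatorial fact that for a proper cover of $\cT$ by $\cD$-sets, for every $i_0$ the point uncovered by deleting $X_{i_0}$ yields, via its equality-coordinates, an injection of $I \setminus \{i_0\}$ into $\bigcup_{i\neq i_0} X_i^\ind$, and moreover these coordinates can be chosen to miss any prescribed single coordinate (using $|X_i^\ind| \geq 2$, which follows from earlier inequalities). Iterating / amalgamating these injections over all choices of $i_0$ and using that a collision at coordinate $m$ means several $\cD$-sets agree at $m$, I get a single injection $I \hookrightarrow \bigcup_i X_i^\ind$ defined on all but one index, yielding $|I| - 1 \leq |\bigcup_i X_i^\ind|$ --- the inequality we want is $|\bigcup_i X_i^\ind| \leq |I| - 1$, so in fact the final move is to combine this with the trivial bound $|\bigcup_i X_i^\ind| \le |I|-1$ coming from: if the union had size $\ge |I|$, Hall's condition (now verified) gives a perfect matching saturating $I$, hence a point of $\cT$ outside every $X_i$, contradicting that $X$ covers $\cT$. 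So the logical skeleton is: (1) properness $\Rightarrow$ Hall's condition for $G$; (2) Hall + $|\bigcup_i X_i^\ind| \ge |I|$ $\Rightarrow$ matching saturating $I$ $\Rightarrow$ uncovered point $\Rightarrow$ contradiction; (3) therefore $|\bigcup_i X_i^\ind| \le |I| - 1$. Step (1) --- deriving Hall's condition from properness of the cover --- is where the real work lies, and I would prove it by the same explicit coordinate-chasing induction used in the proof of Lemma \ref{17}.
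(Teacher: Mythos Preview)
Your final skeleton (1)--(3) has a genuine gap: step (1), that properness of the cover implies Hall's condition for the bipartite graph $i\sim n\Leftrightarrow n\in X_i^\ind$, is false. Here is a concrete counterexample in $\cT$. Take
\[
X_1=S_{1,1}\cap S_{2,1},\quad X_2=S_{1,1}\cap S_{2,2},\quad X_3=S_{1,2}\cap S_{2,1},\quad X_4=S_{1,2}\cap S_{2,2}.
\]
One checks directly that $\{X_1,X_2,X_3,X_4\}$ is a proper cover of $\cT$ (indeed $|\bigcup_i X_i^\ind|=2\le 3=|I|-1$, consistent with the lemma). Yet for $J=\{1,2,3\}$ one has $\bigl|\bigcup_{i\in J}X_i^\ind\bigr|=|\{1,2\}|=2<3=|J|$, so Hall's condition fails for this proper subfamily. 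The obstruction you yourself flagged --- that a witness $x_{i_0}\notin\bigcup_{i\ne i_0}X_i$ need not yield an \emph{injective} choice $i\mapsto n_i$ when different $X_i$ share a forbidden value at a common coordinate --- is exactly what goes wrong, and the induction ``in the style of Lemma \ref{17}'' cannot rescue it, since that argument relied on the rectangle hypothesis $X_i(m)\ne X_j(m)$.

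The paper's proof runs the Hall argument in the opposite direction. Because $X$ \emph{covers} $\cT$, a full system of distinct representatives for $\{X_i^\ind:i\in I\}$ cannot exist (a CDR would let you build an uncovered point). So Hall's theorem gives some $J\subseteq I$ with $\bigl|\bigcup_{i\in J}X_i^\ind\bigr|\le|J|-1$; take $J$ of maximal size with this defect. The point is now to show $J=I$. If $J\subsetneq I$, \emph{properness} gives $t\in\cT\setminus\bigcup_{i\in J}X_i$. For $i\in I\setminus J$ set $I_i'=X_i^\ind\setminus\bigcup_{j\in J}X_j^\ind$; a CDR for $\{I_i':i\in I\setminus J\}$ would let you modify $t$ on coordinates outside $\bigcup_{j\in J}X_j^\ind$ to avoid every $X_i$, contradicting that $X$ covers $\cT$. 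Hence Hall fails again for the residual system, yielding $J'\subseteq I\setminus J$ with $\bigl|\bigcup_{i\in J'}I_i'\bigr|\le|J'|-1$, and then $J\cup J'$ violates the maximality of $J$. The idea you were missing is this ``maximal deficient set plus residual Hall'' step; the roles of covering and properness are the reverse of what your plan assumed.
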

\begin{proof} For each $i\in I$ let $I_i = X_i^\ind$. Recall that a \emph{complete system of distinct representatives} for $\{I_i:i\in I\}$ (a CDR) is an injective function $F:I\rightarrow \bigcup_{i\in I}I_i$ such that $(\forall i\in I)(F(i)\in I_i)$, and that by Hall's marriage theorem a CDR exists if and only if
$$
(\forall J\subseteq I)(|J|\leq |\bigcup_{i\in J}I_i|).\footnote{See \cite{hall35}.}
$$
Clearly if a CDR existed for $\{I_i:i\in I\}$ then $\bigcup_{i\in I}X_i$ would not cover $\cT$ (just argue as in the proof of Lemma \ref{17}). So for some $J\subseteq I$ we have $|\bigcup_{i\in J}I_i|\leq |J|-1$. Assume that $|J|$ is as large as possible so that
\begin{equation}\label{1311111}
(J'\subseteq I \wedge |J'|>|J|) \rightarrow (|J'|\leq |\bigcup_{i\in J'}I_i|).
\end{equation}
If $J = I$ then we are done. So we may assume that $J\subsetneq I$. Since $X$ is a proper cover of $\cT$ there exists $t\in \cT$ such that $t\not\in \bigcup_{i\in J}X_i$. For $i\in I\setminus J$ let $I'_i =I_i\setminus \bigcup_{j\in J}I_i$. Suppose that $\{I'_i:i\in I\setminus J\}$ has a CDR $F:I\setminus J\rightarrow \bigcup_{i\in I\setminus J} I'_i$. Let $s \in \prod_{k\in\mathrm{ran}(F)}[2^k]$ be defined by $s(k) = X_{F^{-1}(k)}(k)$. Then the function $(t\setminus \{(k,t(k)):k\in \mathrm{ran}(F)\})\cup s\not\in \bigcup_{i\in I} X_i$, which is a contradiction. Thus no such CDR can exist and so by Hall's theorem again, we may find a $J'\subseteq I\setminus J$ such that $|\bigcup_{i\in J'}I'_i|\leq |J'|-1$. But then 
$$
|\bigcup_{i\in J\cup J'}I_i| = |\bigcup_{i\in J}I_i \cup \bigcup_{i\in J'} I'_i| \leq |J|-1+|J'| -1\leq |J|+|J'|-1 = |J\cup J'|-1.
$$
But $|J\cup J'|>|J|$, contradicting (\ref{1311111}).
\end{proof}
\begin{prop} For every proper cover of $\cT$ there exists an $N$-rectangle of lower weight.
\end{prop}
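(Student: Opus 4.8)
The plan is to split into two cases according to the size of $J:=\bigcup_{i\in I}X_i^{\ind}$, where $X=\{X_i:i\in I\}$ is the given proper cover, $N:=|I|$ and $m^*:=|J|$. By Lemma~\ref{30} we have $m^*\le N-1$; moreover each $X_i^{\ind}$ is a non-empty subset of $J$, so $1\le|X_i^{\ind}|\le m^*$ for every $i$, and of course $w(X)=\sum_{i\in I}w(|X_i^{\ind}|)$.

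First I would treat the case $m^*\le\eta(1)$. Then every $|X_i^{\ind}|$ lies in $\{1,\dots,\eta(1)\}$, where $\delta$ is constantly $1$ and, by the inequalities collected in Subsection~\ref{r7}, the map $n\mapsto w(n)=2^{-1}(\eta(1)/n)^{\alpha(1)}$ is decreasing; hence $w(|X_i^{\ind}|)\ge w(m^*)$ for every $i$, so $w(X)\ge N\,w(m^*)\ge(m^*+1)\,w(m^*)$. On the other hand I would build an $(m^*+1)$-rectangle directly: choose an $m^*$-element set $I_0\subseteq\scr{N}$ whose least element is large enough that $2^{\min I_0}\ge m^*+1$, and pick $\cD$-sets $Y_1,\dots,Y_{m^*+1}$ with $Y_j^{\ind}=I_0$ and $Y_j(n)\ne Y_{j'}(n)$ whenever $j\ne j'$ and $n\in I_0$. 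By Definition~\ref{r24} the collection $Y:=\{Y_1,\dots,Y_{m^*+1}\}$ is an $(m^*+1)$-rectangle, hence a proper cover of $\cT$ by Lemma~\ref{17}, and by (\ref{rectanglesize}) it has weight $(m^*+1)\,w(m^*)\le w(X)$. (The inequality is strict unless $N=m^*+1$ and every $|X_i^{\ind}|=m^*$, i.e., unless every $X_i^{\ind}=J$; but then $w(X)=(m^*+1)\,w(m^*)=w(Y)$ anyway, and in fact $X$ is itself an $(m^*+1)$-rectangle, as one sees by restricting the cover to a fibre $\{x:x(n)=v\}$ over a coordinate $n\in J$ and invoking Lemma~\ref{30} again inside the resulting subproduct.)

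Next I would treat the case $m^*>\eta(1)$. Then $N-1\ge m^*>\eta(1)$, so $k:=\delta(N-1)\ge 2$; since $w(n)\ge 2^{-\delta(n)}\ge 2^{-k}$ for every $n\le N-1$, we get $w(X)\ge N\,2^{-k}$. As $\delta(N-1)=k$ forces $N-1>\eta(k-1)=2^{2500(k-1)^4}$, it follows that $w(X)>2^{2500(k-1)^4-k}\ge 2^{2498}$, which by an elementary estimate from Subsection~\ref{r7} exceeds $\eta(1)^{\alpha(1)}$. But the $2$-rectangle $\{S_{1,1},S_{1,2}\}$ is a proper cover of $\cT$ (Lemma~\ref{17}) of weight $2w(1)=\eta(1)^{\alpha(1)}$, so it is lighter than $X$. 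This finishes the proof.

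The only point that really needs care is that the weight function $n\mapsto w(n)$ is \emph{not} monotone --- it jumps upward each time $\delta(n)$ increases --- so the naive idea of adding constraints to every $X_i$ until its index has size $N-1$ and then reshuffling coordinates does not work directly. The case split is exactly what sidesteps this: when $m^*$ (hence every relevant index size) stays inside the first $\delta$-block the function $w$ genuinely is decreasing there and the reduction to an $(m^*+1)$-rectangle goes through, whereas once $m^*$ leaves the first block Lemma~\ref{30} forces $N$ to be astronomically large, so $w(X)$ dwarfs the weight of the cheapest rectangle. I expect the degenerate sub-case of Case~1 (a proper cover all of whose index sets coincide and have the maximal admissible size is automatically a rectangle) to be the fiddliest verification, though it is elementary and, under the reading of ``lower'' as ``no larger'', not even strictly necessary.
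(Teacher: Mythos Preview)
Your argument is correct (with ``lower'' read as ``no larger'', which is exactly what the paper establishes), but it is considerably more elaborate than necessary. The paper's proof is a three-line computation that avoids your case split entirely: pick $i_0$ so that $w(|I_{i_0}|)=\min_i w(|I_i|)$; then
\[
w(X)\ \ge\ \sum_i w(|I_i|)\ \ge\ M\,w(|I_{i_0}|)\ \ge\ (|I_{i_0}|+1)\,w(|I_{i_0}|),
\]
the last step using $|I_{i_0}|\le|\bigcup_i I_i|\le M-1$ from Lemma~\ref{30}. The right-hand side is the weight of an $(|I_{i_0}|+1)$-rectangle.

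The point you were worried about---that $n\mapsto w(n)$ is not monotone---simply does not arise: the paper never compares $w$ at different arguments, it just takes the minimum over the finitely many values $w(|I_i|)$ that actually occur in $X$. Your Case~1 reproduces this with an extra (and unneeded) appeal to monotonicity on the first $\delta$-block, and your Case~2 replaces it with a crude numerical bound; both work, but neither is required. Incidentally, your parenthetical claim that in the degenerate sub-case $X$ is itself a rectangle is not obviously true (equal index sets and the right cardinalities do not force $X_i(m)\neq X_j(m)$), though as you note it is irrelevant once ``lower'' is read non-strictly.
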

\begin{proof} Let $X = \{(X,I_i,w_i):i\in [M]\}$ is a proper cover of $\cT$  and assume that $I_1$ is such that $(\forall i\in [M])(w(|I_1|) \leq w(|I_i|))$. By Lemma \ref{30} we have
\begin{equation}\label{2706121}
(\forall i)(|I_i|+1\leq |\bigcup_{i\in [N]}I_i| + 1\leq M).
\end{equation}
So if $Y$ is an $|I_1| + 1$-rectangle we get:
$$
w(X) \geq \sum_{i\in [M]}w(|I_i|)\geq M w(|I_1|) \geq (|I_1|+1)w(|I_1|) \overset{(\ref{rectanglesize})}{=} w(Y).
$$
\end{proof}
Thus we have
\begin{equation}\label{r12}
\psi(\cT) = \inf\{w(X):\mbox{$X$ is an $N$-rectangle, for some $N$}\}.
\end{equation}
But by Inequality \ref{ineq4} we see that $\psi(\cT)$ is just the weight of a $2$-rectangle, that is to say,
\begin{equation}\label{entirespacemeas}
\psi(\cT) = \eta(1)^{\alpha(1)}.
\end{equation}
This gives the first half of Theorem \ref{thecalcs}.\\\\
Now let us try to measure sets of the form $[s]$. Fix a non-empty finite subset $\cI$ of $\scr{N}$ and an $\tau\in \prod_{i\in \cI}[2^i]$ and lets measure $A := [\tau]$.\\\\
Note that as before
$$
\psi(A) = \inf\{w(X): \mbox{$X\subseteq \cD$ is a proper cover of $A$}\}.
$$
The idea here is the same as before but instead of rectangles we use the following analogue of Definition \ref{r24} and also Definition \ref{2506126}, below. 
\begin{df}\label{2506124} Let $X := \{X_i:i\in I\}$ be a collection of $\cD$-sets. We call $X$ a \emph{\bld{$(J,S,N)$-rectangle}}\index{rectangle@$(J,S,N)$-rectangle} for some non-empty finite subset $J$ of $\scr{N}$, $S\subseteq \prod_{j\in J}[2^j]$ and integer $N\geq 2$ if and only if the following hold:
\begin{itemize}
\item $J\subsetneq X_i^\ind$, always;
\item $\{\bigcap_{l\in X_i^\ind\setminus J}S_{l,X_i(l)}:i\in I\}$ is an $N$-rectangle;
\item $(\forall s\in S)(\forall i\in I)(\forall j\in J)(X_i(j) \neq s(j))$.
\end{itemize}
In the case that $S= \{s\}$, we shall call $X$ a \eb{$(J,s,N)$-rectangle}\index{rectangle@$(J,s,N)$-rectangle}.
\end{df}
For example, in the case that $J = [m]$, for some $m\in \scr{N}$, this new type of rectangle is just an old rectangle with $m$ rows attached to the bottom (most likely with a gap) where the values of the determining sequences along these rows miss the corresponding values of $s$.\\\\
Of course the weight of a $(J,s,N)$-rectangle is given by
$$
N\cdot w(|J|+N-1)
$$
\begin{lem} Every $(\cI,\tau,N)$-rectangle covers $A$.
\end{lem}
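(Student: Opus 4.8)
The plan is to unwind both definitions and then quote Lemma~\ref{17}. Fix a $(\cI,\tau,N)$-rectangle $X=\{X_i:i\in I\}$ and an arbitrary $f\in A=[\tau]$; the goal is to produce an index $i\in I$ with $f\in X_i$, i.e.\ with $f(n)\neq X_i(n)$ for every $n\in X_i^{\ind}$. The first move is to use the inclusion $\cI\subsetneq X_i^{\ind}$ (first clause of Definition~\ref{2506124}) to split $X_i^{\ind}$ into the two pieces $\cI$ and $X_i^{\ind}\setminus \cI$, which will be handled by two different mechanisms.

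On the piece $\cI$ there is nothing to choose: for \emph{every} $i\in I$ simultaneously, the third clause of Definition~\ref{2506124} (taken with $S=\{\tau\}$, $J=\cI$) says $X_i(j)\neq \tau(j)$ for all $j\in\cI$, and since $f\in[\tau]$ we have $\tau(j)=f(j)$, so $f(j)\neq X_i(j)$ for all $j\in\cI$ automatically. Hence the only thing left to arrange is a single $i$ for which $f(l)\neq X_i(l)$ holds for all $l\in X_i^{\ind}\setminus\cI$.

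For that I would pass to the associated collection $Y_i:=\bigcap_{l\in X_i^{\ind}\setminus\cI}S_{l,X_i(l)}$ for $i\in I$, which by the second clause of Definition~\ref{2506124} is an $N$-rectangle with $Y_i^{\ind}=X_i^{\ind}\setminus\cI$ and $Y_i(l)=X_i(l)$. By Lemma~\ref{17} this $N$-rectangle is a (proper) cover of $\cT$, so since $f\in A\subseteq\cT$ there is some $i\in I$ with $f\in Y_i$; by the definition of the sets $S_{l,\cdot}$ this is exactly the statement that $f(l)\neq X_i(l)$ for every $l\in X_i^{\ind}\setminus\cI$. Combining this $i$ with the previous paragraph gives $f(n)\neq X_i(n)$ for every $n\in X_i^{\ind}$, i.e.\ $f\in X_i$, which is what we wanted.

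I do not expect a real obstacle here: the statement is essentially a bookkeeping exercise once Lemma~\ref{17} is available. The only points that need mild care are that the two parts of $X_i^{\ind}$ must be treated separately (the ``misses the values of $\tau$'' clause disposes of the $\cI$-coordinates for all $i$ at once, while Lemma~\ref{17} supplies the right $i$ for the remaining coordinates), and that the index $i$ produced by Lemma~\ref{17} is then used uniformly across both parts.
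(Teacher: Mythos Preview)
Your proof is correct and follows essentially the same approach as the paper: split $X_i^{\ind}$ into $\cI$ (handled by the third clause of Definition~\ref{2506124} together with $f\in[\tau]$) and $X_i^{\ind}\setminus\cI$ (handled by the second clause and Lemma~\ref{17}). The paper phrases this as a proof by contradiction whereas you argue directly, but the content is identical.
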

\begin{proof} Let $X = \{X_i:i\in I\}$ be an $(\cI,\tau,N)$-rectangle, as in the above statement. Assume that we can find a $y\in A\setminus \bigcup X$. The assumption that $y\not\in X$ cannot be witnessed by $y(i)$ for some $i\in \cI$ since for each such $i$, we have $y(i) = \tau(i)\neq X_j(i)$, for each $j\in I$. In particular $y$ witnesses that $Y = \{\bigcap_{l\in X_i^\ind\setminus \cI}S_{l,X_i(l)}:i\in I\}$ does not cover $\cT$, which contradicts Lemma \ref{17} and the fact that $Y$ is an $N$-rectangle.
\end{proof}
Next we see how to use Lemma \ref{30} in this new situation and adapt what we have already done with $\psi(\cT)$ to $\psi(A)$ (compare (\ref{2706121}) above, and (\ref{2506123}) below).
\begin{lem}\label{43} If $X = \{X_i:i\in I\}$ is a proper cover of $A$ such that $(\forall i\in \cI)(X_i^\ind\setminus \cI\neq 0)$ then $\{\bigcap_{l\in X_i^\ind\setminus \cI}S_{l,X_i(l)}:i\in I\}$ is a proper cover of $\cT$
\end{lem}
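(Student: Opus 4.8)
The plan is to exhibit the family $Y_i := \bigcap_{l\in X_i^\ind\setminus\cI} S_{l,X_i(l)}$ ($i\in I$) as the desired proper cover of $\cT$. By hypothesis each set $X_i^\ind\setminus\cI$ is non-empty, so each $Y_i$ is a genuine $\cD$-set with $Y_i^\ind = X_i^\ind\setminus\cI$; what has to be shown is that $\{Y_i:i\in I\}$ covers $\cT$ and that no proper subfamily does. The overarching idea is the one already used for $\psi(\cT)$: points of $\cT$ and points of $A=[\tau]$ are related by filling in (or forgetting) the coordinates indexed by $\cI$, with the values prescribed by $\tau$, and since $Y_i$ ignores exactly those coordinates the cover–relation should transport faithfully.

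First I would record a small preliminary observation that turns out to be decisive: \emph{for every $i\in I$ and every $l\in X_i^\ind\cap\cI$ one has $X_i(l)\neq\tau(l)$}. Indeed, since $X$ is a \emph{proper} cover of $A$, deleting $X_i$ leaves a family that fails to cover $A$, while the whole family covers $A$; hence there is $a\in A\cap X_i$. Then $a\in[\tau]$ forces $a(l)=\tau(l)$, and $a\in X_i\subseteq S_{l,X_i(l)}$ forces $a(l)\neq X_i(l)$, so $\tau(l)\neq X_i(l)$. (When $|I|=1$ this degenerates to $A\subseteq X_1$ together with $A\neq\emptyset$, which is equally fine.)

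For the covering claim, given $y\in\cT$ define $z\in\cT$ by $z(n)=\tau(n)$ for $n\in\cI$ and $z(n)=y(n)$ otherwise; then $z\in A$, so $z\in X_i$ for some $i$, i.e.\ $z(l)\neq X_i(l)$ for all $l\in X_i^\ind$, and in particular $y(l)=z(l)\neq X_i(l)$ for all $l\in X_i^\ind\setminus\cI$, so $y\in Y_i$. For properness, fix $J\subsetneq I$ and, using that $X$ properly covers $A$, choose $t\in A\setminus\bigcup_{i\in J}X_i$; the claim is that this same $t$ lies outside $\bigcup_{i\in J}Y_i$. Fixing $i\in J$, the condition $t\notin X_i$ gives some $n_i\in X_i^\ind$ with $t(n_i)=X_i(n_i)$. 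This is the one delicate point, and the main (indeed only) obstacle: a priori $n_i$ could lie in $\cI$, in which case it says nothing about membership in $Y_i$. The preliminary observation removes exactly this possibility, since $n_i\in\cI$ would give $X_i(n_i)=t(n_i)=\tau(n_i)$, contradicting $X_i(n_i)\neq\tau(n_i)$; hence $n_i\in X_i^\ind\setminus\cI$ and therefore $t\notin Y_i$. As $i\in J$ was arbitrary, $t\in\cT\setminus\bigcup_{i\in J}Y_i$, so no proper subfamily of $\{Y_i:i\in I\}$ covers $\cT$. Combining the two parts finishes the proof; I expect the only subtlety to be the verification just described, everything else being routine coordinate chasing in the spirit of Lemma \ref{17}.
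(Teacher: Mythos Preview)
Your proof is correct and follows essentially the same route as the paper: both arguments transport the cover--relation between $A$ and $\cT$ by forgetting/filling in the $\cI$-coordinates, and both handle properness via the contrapositive that a proper subfamily of the $Y_i$ covering $\cT$ would yield a proper subfamily of the $X_i$ covering $A$. The one difference is that you isolate and prove the key fact $X_i(l)\neq\tau(l)$ for $l\in X_i^{\ind}\cap\cI$ (using that properness forces $A\cap X_i\neq\emptyset$), whereas the paper's properness step (``But then $\{X_i:i\in I'\}$ is a cover of $A$'') uses this implicitly without spelling it out; your version is therefore slightly more complete.
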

\begin{proof} 
For each $i\in I$, let $I_i = X^\ind_i$. Let $X'_i = \bigcap_{l\in I_i\setminus \cI}S_{l,X_i(l)}$ and lets show that $Y := \{X_i':i\in I\}$ is a cover of $\cT$.
Suppose not and let $x\in \cT\setminus \bigcup Y$. Thus for every $i\in I$ there exists an $m_i\in X_i^\ind\setminus \cI$ such that $x(m_i) = X_i(m_i)$. Let $y\in A$ be such that $y(j) = x(j)$, for each $j\in \{m_i:i\in I\}$. Then it is straightforward to see that $y\not\in \bigcup X$, which contradicts the assumption that $X$ is a cover of $A$. Suppose now that $Y$ is not proper. Then there exists $I'\subsetneq I$ such that $\{X_i':i\in I'\}$ is a cover of $\cT$. But then $\{X_i:i\in I'\}$ is a cover of $A$, contradicting the properness of $X$.
\end{proof}
\begin{df}\label{2506126} A $\cD$-set $X$ is a \emph{\bld{$(I,S,J)$-spike}}\index{spike@$(I,S,J)$-spike} for some non-empty finite subset $I$ of $\scr{N}$, $S\subseteq \prod_{j\in I}[2^j]$ and $J\subseteq I$ if and only if $X$ is of the form 
\begin{equation}\label{47}
X = \bigcap_{j\in J} S_{j,t(j)}
\end{equation} 
such that $t\in \prod_{j\in J}[2^j]$ and $(\forall s\in S)(\forall j\in J)(t(j)\neq s(j))$. In the case that $S = \{s\}$, we shall call $X$ an \eb{$(I,s,J)$-spike}\index{spike@$(I,s,J)$-spike}.
\end{df}
Of course, every $(\cI,\tau,J)$-spike covers $A$.
\begin{prop}\label{1211111} For every proper cover of $A$ there exists an $(\cI,\tau,J)$-spike of lower weight.
\end{prop}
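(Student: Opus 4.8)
The plan is to adapt the argument just given for $\psi(\cT)$ — every proper cover of $\cT$ admits an $N$-rectangle of no greater weight — the dichotomy now being whether the cover already contains a $\cD$-set supported inside $\cI$.

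Let $X=\{X_i:i\in I\}$ be a proper cover of $A=[\tau]$ and put $m=|\cI|$, $M=|I|$. Since $X$ is proper every $X_i$ meets $A$, hence $X_i(n)\neq\tau(n)$ for each $n\in X_i^{\ind}\cap\cI$ (otherwise $X_i\cap A=\emptyset$ and $X_i$ could be discarded). \textbf{Case 1: some $X_{i_0}$ has $X_{i_0}^{\ind}\subseteq\cI$.} Then $X_{i_0}=\bigcap_{n\in X_{i_0}^{\ind}}S_{n,X_{i_0}(n)}$ is already a $(\cI,\tau,X_{i_0}^{\ind})$-spike — it covers $A$, as remarked before the proposition — and $w(X)=\sum_{i\in I}w(|X_i^{\ind}|)\geq w(|X_{i_0}^{\ind}|)$, so $X_{i_0}$ is the spike sought. \textbf{Case 2: $X_i^{\ind}\setminus\cI\neq\emptyset$ for every $i\in I$.} Then Lemma~\ref{43} applies, so $Y:=\{\bigcap_{l\in X_i^{\ind}\setminus\cI}S_{l,X_i(l)}:i\in I\}$ properly covers $\cT$, and Lemma~\ref{30} applied to $Y$ gives $|\bigcup_{i\in I}(X_i^{\ind}\setminus\cI)|\leq M-1$, whence $|X_i^{\ind}|\leq m+M-1$ for every $i$. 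Pick $i^*$ minimising $w(|X_i^{\ind}|)$ and set $p=|X_{i^*}^{\ind}|$; then $p\leq m+M-1$ and $w(X)\geq M\,w(p)$. If $p\leq m$, choose any $J\subseteq\cI$ with $|J|=p$ and any $t\in\prod_{j\in J}[2^j]$ with $t(j)\neq\tau(j)$ throughout (each $[2^j]$ has at least two points); then $\bigcap_{j\in J}S_{j,t(j)}$ is a $(\cI,\tau,J)$-spike of weight $w(p)\leq M\,w(p)\leq w(X)$. If instead $p\geq m+1$, put $N=p-m+1$, so $2\leq N\leq M$ and $w(X)\geq M\,w(p)\geq N\,w(p)=N\,w(m+N-1)$ — precisely the weight of an $(\cI,\tau,N)$-rectangle.

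It remains, in this last case, to pass from the rectangle to a spike, that is, to know that $N\,w(m+N-1)\geq\min\{w(k):1\leq k\leq m\}$ for every $N\geq2$; the minimiser $k$ then gives the index-size of an $(\cI,\tau,J)$-spike of weight $\leq w(X)$. This is the $A=[\tau]$ analogue of Inequality~\ref{ineq4} (which pinned down the $2$-rectangle as the cheapest cover of $\cT$), and I would record it in Subsection~\ref{r7}. The point is that a naive telescoping $w(m)\leq2w(m+1)\leq\cdots\leq2^{N-1}w(m+N-1)$ is too weak here, so the estimate has to exploit the extremely fast growth of $\eta(k)=2^{2500k^4}$, which keeps $w$ almost constant inside each block $\{k:\delta(k)=n\}$ and lets it fall only at the sparse block-boundaries. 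I expect that numerical inequality — not the combinatorics, which is routine given Lemmas~\ref{43} and~\ref{30} — to be the main obstacle.
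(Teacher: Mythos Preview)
Your argument is correct, and in fact your combinatorial reduction is cleaner than the paper's. The paper also splits into two cases (some $|I_i|\leq m$ versus all $|I_i|>m$, which is a slightly coarser dichotomy than yours but equivalent for the purpose of invoking Lemma~\ref{43}), and then in the main case tries to compare $w(X)$ with the $(\cI,\tau,M)$-rectangle, where $M$ is the \emph{size of the cover}. Since $w$ is not monotone across the block boundaries $\eta(k)$, this forces a case analysis on $\delta(M+m-1)$: when all $|I_i|$ lie in the top block one has $w(|I_i|)\geq w(M+m-1)$ directly, but when they straddle a boundary the paper must split the sum, use Inequality~\ref{ineq6} on the lower part, and sometimes replace the target rectangle by an $(\cI,\tau,\eta(\delta_1)-m+1)$-rectangle instead. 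Your trick of letting the rectangle size be $N=p-m+1$ with $p$ the argmin of $w(|X_i^{\ind}|)$ sidesteps all of this, because $w(X)\geq M\,w(p)\geq N\,w(p)=N\,w(m+N-1)$ needs no monotonicity of $w$ at all. Both arguments finish the same way: Inequality~\ref{ineq5} (which is exactly the estimate you anticipate recording) gives $w(m)\leq N\,w(m+N-1)$ for every $N\geq 2$, so the full $(\cI,\tau,\cI)$-spike already suffices.
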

Assuming this for now we obtain
\begin{eqnarray}
\psi(A) &=& \min\{w(X): \mbox{$X$ is an $(\cI,\tau,J)$-spike for some $J\subseteq \cI$}\}\nonumber\\
 &=& \min\{2^{-\delta(|\cI|)+1},w(|\cI|)\}.\label{atommeas}
\end{eqnarray}
which completes the proof of Theorem \ref{thecalcs}.
\begin{proof}[Proof of Lemma \ref{1211111}]Let $X = \{(X_i,I_i,w_i):i\in [N]\}$ be a proper cover of $A$ and let $m = |\cI|$. If there exists $i\in [N]$ such that $|I_i|\leq m$ then any $(\cI,\tau,J)$-spike such that $|J| = |I_i|$ will have a lower weight than $X$ and will cover $A$ and we will be done. So we may assume that
\begin{equation}\label{2506122}
(\forall i\in [N])(|I_i|> m).
\end{equation}
By Lemma \ref{43} and Lemma \ref{30} we get
\begin{equation}\label{2506123}
(\forall i\in [N])(|I_i| \leq N+m-1).
\end{equation}
We now divide the proof into the following cases.
\begin{itemize}
\item $\delta(N+m-1) = 1$. Then
$$
w(X) = \sum_{i\in [N]}2^{-1}\left(\frac{\eta(1)}{|I_i|}\right)^{\alpha(1)} \overset{(\ref{2506123})}{\geq} N 2^{-1}\left(\frac{\eta(1)}{N+m-1}\right)^{\alpha(1)},
$$
and this lower bound can be achieved by any $(\cI,\tau,N)$-rectangle.
\item $\delta(N+m-1) >1$. Let $\delta_1 = \delta(N+m-1)-1$, $\delta_2 = \delta(N+m-1)$, $J_{1} = \{i\in [N]:\delta(|I_i|)\leq \delta_1\}$ and $J_2 = [N]\setminus J_1$. Of course
\begin{equation}\label{2506121}
\eta(\delta_1)<N+m-1\leq \eta(\delta_2).
\end{equation}
Notice that if $2>\eta(\delta_1) -m + 1$ then
$$
(\forall i\in [N])(\eta(\delta_1)\overset{(\ref{2506121})}{\leq} m \overset{(\ref{2506122})}{<} |I_i|\leq N+m-1 \leq \eta(\delta_2)),
$$
and so
$$
w(X) = \sum_{i\in [N]}2^{-\delta_2}\left(\frac{\eta(\delta_2)}{|I_i|}\right)^{\alpha(\delta_2)} \overset{(\ref{2506123})}{\geq} N2^{-\delta_2}\left(\frac{\eta(\delta_2)}{N+m-1}\right)^{\alpha(\delta_2)},
$$
which can be achieved by any $(\cI,\tau,N)$-rectangle. So we may assume that 
\begin{equation*}
2\leq \eta(\delta_1) -m + 1.
\end{equation*} By Inequality \ref{ineq6} we have
$$
w(X) = \sum_{i\in J_1} w_{|I_i|} + \sum_{i\in J_2} w_{|I_i|} \geq |J_1|2^{-\delta_1} + |J_2|2^{-\delta_2}\left(\frac{\eta(\delta_2)}{N+m-1}\right)^{\alpha(\delta_2)}.
$$
\begin{itemize}
\item $2^{-\delta_2}\left(\frac{\eta(\delta_2)}{N+m-1}\right)^{\alpha(\delta_2)}\leq 2^{-\delta_1}$. Then 
$$
w(X) \geq N2^{-\delta_2}\left(\frac{\eta(\delta_2)}{N+m-1}\right)^{\alpha(\delta_2)},
$$
which can be achieved by any $(\cI,\tau,N)$-rectangle..
\item $2^{-\delta_2}\left(\frac{\eta(\delta_2)}{N+m-1}\right)^{\alpha(\delta_2)} > 2^{-\delta_1}$. Then
$$
w(X) \geq N2^{-\delta_1}\overset{(\ref{2506121})}{>}(\eta(\delta_1)-m+1)2^{-\delta_1}.
$$
But this can be achieved by any $(\cI,\tau,\eta(\delta_1) -m + 1)$-rectangle since
$$
w(\eta(\delta_1)-m+1) = 2^{-\delta(\eta(\delta_1))}\left(\frac{\eta(\delta(\eta(\delta_1)))}{\eta(\delta_1)}\right)^{-\alpha(\delta(\eta(\delta_1)))} = 2^{-\delta_1}\left(\frac{\eta(\delta_1)}{\eta(\delta_1)}\right)^{-\alpha(\delta_1)} =  2^{-\delta_1}.
$$
\end{itemize}
\end{itemize}
Now, by Inequality \ref{ineq5}, any $(\cI,\tau,\cI)$-spike has a lower weight than any $(\cI,\tau,k)$-rectangle, and this completes the proof.
\end{proof} 
\subsection{Inequalities}\label{r7}
Here we list the inequalities that are needed for Subsection \ref{calcs1}. 
\begin{ineq}\label{ineq1} For each $k\in\scr{N}$ and $n\in [\eta(k)]$
$$
2^{-k}\left(\frac{\eta(k)}{n}\right)^{\alpha(k)}<2^{-(k+1)}\left(\frac{\eta(k+1)}{n}\right)^{\alpha(k+1)}.
$$
\end{ineq}
\begin{ineq}\label{ineq6} For $\delta_1,\delta_2,k\in\scr{N}$ such that $\delta_1\leq \delta_2 $ and $k \in [\eta(\delta_1)]$ we have
$$
2^{-\delta_1}\left(\frac{\eta(\delta_1)}{k}\right)^{\alpha(\delta_1)}\geq 2^{-\delta_2}.
$$
\end{ineq}
\begin{ineq}\label{ineq4} Let $N,M,\delta_1,\delta_2\in \scr{N}$ be such that $2\leq N \leq M$ and $\delta_1\leq \delta_2$. 
Then
$$
N2^{-\delta_1}\left(\frac{\eta(\delta_1)}{N-1}\right)^{\alpha(\delta_1)}\leq M2^{-\delta_2}\left(\frac{\eta(\delta_2)}{M-1}\right)^{\alpha(\delta_2)}.
$$ 
\end{ineq}
\begin{ineq}\label{ineq5} Let $k,N,\delta_1,\delta_2\in\scr{N}$ be such that 
$\delta_1\leq \delta_2$. Then
\begin{equation}\label{2310101}
2^{-\delta_1}\left(\frac{\eta(\delta_1)}{k}\right)^{\alpha(\delta_1)}\leq N2^{-\delta_2}\left(\frac{\eta(\delta_2)}{N+k-1}\right)^{\alpha(\delta_2)}.
\end{equation}
\end{ineq}
\section{Submeasures and signed measures}\label{measures}
We begin with the following definition.
\begin{df}\label{0909122} If $\gB$ is a Boolean algebra, call a collection $\{a_i:i\in [n]\}\subseteq \gB$, \eb{$*$-free}\index{free@$*$-free} if and only if for every non-empty $J\subseteq [n]$ we have
$$
\left(\bigcap_{j\in J} a_j\right)\cap\left(\bigcap_{j\not\in J}a^c_{j}\right)\neq 0 \wedge \bigcup_{i\in [n]}a_i = {1}.
$$ 
\end{df}
In this section we prove the following.
\begin{thm}\label{2911114} For every countable Boolean algebra $\gA$ there exists a Boolean algebra $\gB$ and an injective map $\gf:\gA\rightarrow \gB$ with the following properties:
\begin{enumerate}[\hspace{0.0cm}(T.1)]
\item $\gB = \La \gf[\gA] \Ra$, in particular $\gB$ will also be countable;\label{TT1}
\item if $\gA'\subseteq \gA$ is a finite subalgebra, then the collection $\gf[\mathrm{atoms}(\gA')]$ is $*$-free in $\gB$;\label{TT2}
\item $(\forall a,b\in \gA)(\gf(a\cup b) = \gf(a)\cup \gf(b))$.\label{TT3}
\end{enumerate}
Moreover, if $\g{D}$ is a Boolean algebra and $\g{g}:\gA\rightarrow \g{D}$ satisfies the above, then for any functional $\mu$ on $\gA$, there exists a unique signed finitely additive measure $\lambda$ on $\g{D}$ such that $\mu(a) = \lambda(\g{g}(a))$, for each $a\in \gA$. 
\end{thm}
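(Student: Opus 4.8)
The plan is to build $(\gB,\gf)$ as a direct limit of finite Boolean algebras and then to verify the universal property one finite subalgebra at a time, the whole argument resting on one observation about $*$-freeness: a finite family $b_1,\dots,b_n$ of a Boolean algebra is $*$-free if and only if $\La b_1,\dots,b_n\Ra$ has exactly $2^n-1$ atoms, namely the nonzero ``cells'' $\bigcap_{i\in C}b_i\cap\bigcap_{i\notin C}b_i^c$ with $\emptyset\neq C\subseteq[n]$ --- equivalently, $\La b_1,\dots,b_n\Ra$ is freely generated by the $b_i$ subject only to the relation $\bigcup_ib_i=1$. Consequently, whenever $b_1',\dots,b_n'$ cover $1$ in a Boolean algebra, the assignment $b_i\mapsto b_i'$ extends to a (surjective) homomorphism onto $\La b_1',\dots,b_n'\Ra$, which is an embedding exactly when the $b_i'$ are also $*$-free. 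I would also record the trivial companion fact that the block-joins of a $*$-free family are again $*$-free.

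First I would write $\gA=\bigcup_{k\in\omega}\gA_k$ as an increasing union of finite subalgebras with $\gA_0=\{0,1\}$, and construct finite algebras $\gB_0\subseteq\gB_1\subseteq\cdots$ with injective, join-preserving maps $\gf_k\colon\gA_k\to\gB_k$ such that $\gf_k(1)=1$, $\gf_{k+1}\res\gA_k=\gf_k$, $\gB_k=\La\gf_k[\gA_k]\Ra$, and $\gf_k[\mathrm{atoms}(\gA_k)]$ is $*$-free. At stage $k+1$ I would take $\gB_{k+1}$ generated by a $*$-free, $1$-covering family $\{c_t:t\in\mathrm{atoms}(\gA_{k+1})\}$ --- concretely $\gB_{k+1}=\cP\big(\cP(\mathrm{atoms}(\gA_{k+1}))\setminus\{\emptyset\}\big)$ with $c_t=\{B:t\in B\}$ --- set $\gf_{k+1}(t)=c_t$ on atoms and extend by joins; injectivity and (T.3) are immediate from $*$-freeness, and $\gf_{k+1}(1)=\bigcup_tc_t=1$. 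For each atom $e$ of $\gA_k$ the element $b_e:=\bigcup\{c_t:t\leq e\}=\gf_{k+1}(e)$ is a block-join of a $*$-free family, so $\{b_e:e\in\mathrm{atoms}(\gA_k)\}$ is $*$-free and covers $1$; by the observation, $\gf_k(e)\mapsto b_e$ extends to an embedding $\gB_k\hookrightarrow\gB_{k+1}$, and identifying $\gB_k$ with its image gives $\gf_{k+1}\res\gA_k=\gf_k$. Then $\gB:=\bigcup_k\gB_k$ and $\gf:=\bigcup_k\gf_k$ work: $\gB$ is countable and $=\La\gf[\gA]\Ra$ (T.1), $\gf$ preserves joins (T.3), and for finite $\gA'\subseteq\gA_k$ the atoms of $\gA'$ are joins of atoms of $\gA_k$, so $\gf[\mathrm{atoms}(\gA')]$ is a block-join family of the $*$-free $\gf[\mathrm{atoms}(\gA_k)]$, hence $*$-free (T.2).

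For the ``moreover'' clause, let $\g{D},\g{g}$ satisfy (T.1)--(T.3) and let $\mu$ be a functional on $\gA$. Put $\g{D}_k=\La\g{g}[\gA_k]\Ra=\La\g{g}[\mathrm{atoms}(\gA_k)]\Ra$, so $\g{D}=\bigcup_k\g{D}_k$ by (T.1); a signed finitely additive measure on $\g{D}$ is then exactly a function whose restriction to each $\g{D}_k$ is a signed measure, and any coherent family of signed measures on the $\g{D}_k$ glues (additivity is finitary). Fix $k$, write $\mathrm{atoms}(\gA_k)=\{e_1,\dots,e_n\}$, $g_i=\g{g}(e_i)$; since $\{g_1,\dots,g_n\}$ is $*$-free, $\g{D}_k$ is finite with atoms precisely the cells $u_C=\bigcap_{i\in C}g_i\cap\bigcap_{i\notin C}g_i^c$ ($\emptyset\neq C\subseteq[n]$), so a signed measure on $\g{D}_k$ is a choice of the reals $\lambda(u_C)$, and with $a_S=\bigcup_{i\in S}e_i$ and (T.3) giving $\g{g}(a_S)=\bigcup\{u_C:C\cap S\neq\emptyset\}$, the demand $\lambda\circ\g{g}=\mu$ on $\gA_k$ is the linear system
\[
\sum_{C:\,C\cap S\neq\emptyset}\lambda(u_C)=\mu(a_S)\qquad(S\subseteq[n]).
\]
Setting $g(T)=\sum_{C\subseteq T}\lambda(u_C)$ (with $u_\emptyset:=0$), the left side equals $\lambda(1)-g([n]\setminus S)$ with $\lambda(1)=g([n])=\mu(1)$, so the system says $g([n]\setminus S)=\mu(1)-\mu(a_S)$ for all $S$; thus $g$ is forced by $\mu$, and then $\lambda(u_C)=\sum_{T\subseteq C}(-1)^{|C\setminus T|}g(T)$ by Möbius inversion. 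This gives a unique signed measure $\lambda_k$ on each $\g{D}_k$ with $\lambda_k\circ\g{g}=\mu$ on $\gA_k$; since $\lambda_{k+1}\res\g{D}_k$ is a signed measure on $\g{D}_k$ agreeing with $\mu$ through $\g{g}$, uniqueness gives $\lambda_{k+1}\res\g{D}_k=\lambda_k$, so $\lambda:=\bigcup_k\lambda_k$ is the required signed finitely additive measure on $\g{D}$, and it is the only one because any such restricts to $\lambda_k$ on each $\g{D}_k$.

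The step I expect to be the main obstacle is the coherence in the construction of $(\gB,\gf)$ --- arranging $\gB_k\subseteq\gB_{k+1}$ so that $\gf_{k+1}$ genuinely extends $\gf_k$ --- which is exactly what the reading of $*$-freeness as a universal property is designed to handle; once that is in place the rest of Part~1 is bookkeeping, and in the ``moreover'' clause the only real content, after $*$-freeness has pinned down the atoms of $\g{D}_k$, is recognizing the displayed system as a Möbius-inversion system and hence uniquely solvable.
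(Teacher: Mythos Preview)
Your proposal is correct and follows the same overall architecture as the paper: build $(\gB,\gf)$ as a direct limit of the finite algebras $\sfr{n_k}$ along $\gA_k$-good maps, with the coherence step handled via the universal property of $*$-free families (the paper's Lemma~\ref{2911112}/\ref{2911113}), and then prove the ``moreover'' clause one finite stage at a time, using uniqueness to glue.

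Two points of genuine divergence are worth noting. First, for the unique solvability of the finite linear system $\sum_{C\cap S\neq\emptyset}\lambda(u_C)=\mu(a_S)$ you use M\"obius inversion via $g(T)=\sum_{C\subseteq T}\lambda(u_C)$, whereas the paper proves by induction that the incidence matrix $(a_{ij})$ with $a_{ij}=\mathbbm{1}[y_i\cap y_j\neq\emptyset]$ is invertible (Lemma~\ref{0612111}). Your route is shorter and more conceptual; the paper's has the virtue of identifying the matrix explicitly. Second, for the ``moreover'' clause the paper does not work directly in $\g{D}$: it manufactures an isomorphism $F\colon\gB\to\g{D}$ (by reconstructing the same directed system from $\g{g}$) and transports the measure already built on $\gB$. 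Your direct argument in $\g{D}$ avoids this detour and the attendant bookkeeping; the paper's approach, on the other hand, makes explicit that any $(\g{D},\g{g})$ satisfying (T.1)--(T.3) is isomorphic to the canonical $(\gB,\gf)$, a fact you use only implicitly.
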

Thus to each functional on a given countable Boolean algebra we associate a signed measure. In fact this association will be a linear map from the real vector space of all functionals on $\gA$ to the real vector space of all signed measures on $\gB$. We are of course interested in the case when $\mu$ is a submeasure. Unfortunately even for very simple submeasures, the corresponding measure may be unbounded.\\\\
At the end of this section we give an explicit example of such an $\gf$. More specifaclly, given a sequence $(X_i)_{i\in\scr{N}}$ of finite non-empty sets, we construct another sequence $(Y_i)_{i\in\scr{N}}$, consisting also of finite non-empty sets, and an injective map 
$$
\g{f}:\mathrm{Clopen}(\prod_{i\in\scr{N}}X_i)\rightarrow \mathrm{Clopen}(\prod_{i\in\scr{N}}Y_i)
$$ 
that satisfies properties (T.2) and (T.3) of Theorem \ref{2911114}. To obtain the rest of Theorem \ref{2911114}, we can take $\gB = \La \g{f}[\gA] \Ra$ where $\gA:= \mathrm{Clopen}(\prod_{i\in\scr{N}}X_i)$.\\\\
To illustrate the motivating idea behind Theorem \ref{2911114} consider the submeasure $\mu$ defined on the finite Boolean algebra $\gA$ of two atoms, $a$ and $b$, given by
$$
\mu(a) = \mu(b) = \frac{3}{4}, \quad \mu(a\cup b) = 1.
$$
This is clearly not additive. If we supposed for a moment that $\mu$ was additive then $a$ and $b$ would have to intersect. Thus we view the atoms $a$ and $b$ as not having enough \emph{space} for the submeasure $\mu$. We try to insert this space by allowing $a$ and $b$ to intersect, and by doing so we turn $\mu$ into a measure. To this end we consider the algebra $\gB$ of three atoms $c,d$ and $e$ and the map $\gf:\gA\rightarrow\gB$ define by
$$
a\mapsto c\cup d, b\mapsto d\cup e, a\cup b\mapsto c\cup d\cup e.
$$
The atom $d$ then becomes the inserted space, and on $\gB$ we can take the measure 
$$
\lambda(c) = \lambda(e) = \frac{1}{4}, \quad \lambda(d) = \frac{1}{2}
$$
Notice that no matter what values we had for $\mu$, we would still be able to solve (uniquely) for $\lambda$ and so we have a finite version of Theorem \ref{2911114}. Indeed, one need only solve the following system of linear equations:
$$
\lambda(c)+\lambda(d) = \mu(a), \quad \lambda(d)+\lambda(e) = \mu(b), \quad \lambda(c)+\lambda(d)+\lambda(e) = 1.
$$
The final $\gf$ and $\gB$ will be obtained as a direct limit of these finite constructions.\\\\
In this way we obtain the definition of $*$-free from Definition \ref{0909122}, and the following: 
\begin{df} For $n\in\scr{N}$ let $\sfr{n}$ be the Boolean algebra $\cP(\cP([n])^+)$. Call the sets $\{y\in \cP([n])^+:i\in y\}$, for $i\in [n]$, the \eb{$*$-free  generators} of $\sfr{n}$\index{free generators@$*$-free generators}.
\end{df}
\begin{rem} Clearly the $*$-free generators of $\sfr{n}$ are $*$-free and generate $\sfr{n}$. If $\fr{n}$ is the freely generated Boolean algebra over $n$ elements with free generators $a_1,...,a_n$ then $\sfr{n}$ may be viewed as the Boolean algebra
$$
\fr{n}_{\bigcup_{i=1}^na_i} (= \{a\in \fr{n}:a\subseteq \bigcup_{i\in [n]}a_i\}).
$$
\end{rem}
In the motivating example we see that the algebra $\gB$ with three atoms $c,d$ and $e$ is given by $\sfr{2}$ where we can take
$$
c = \{\{1\}\}, d = \{\{1,2\}\}, e = \{\{2\}\}.
$$
Notice that the atoms of $\gA$ are mapped to the $*$-free generators of $\gB$ ($=\sfr{2}$).\\\\ 
The fact that we can always solve for $\lambda$ (as in the motivating example) is given by the following two lemmas.
\begin{lem}\label{0612111} For each $n\in\scr{N}$ enumerate $\cP([n])^+ = \{y_i:i\in [2^{n}-1]\}$. Then the matrix $(a_{ij})_{i,j\in [2^n-1]}$ defined by
$$
a_{ij} = \left\{\begin{array}{cl} 1,\quad&\mbox{if $y_i\cap y_j\neq 0$;}\\
0,\quad&\mbox{otherwise.}\end{array}\right.
$$
is invertible.
\end{lem}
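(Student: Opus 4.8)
The plan is to prove that $A=(a_{ij})$, regarded as a linear operator on the space $\scr{R}^{\cP([n])^+}$ of functions $f:\cP([n])^+\rightarrow\scr{R}$, is injective; for a square matrix this is the same as invertibility, and it also makes the statement manifestly independent of the chosen enumeration. In these terms $(Af)(y)=\sum_{z\cap y\neq\emptyset}f(z)$, the sum taken over all nonempty $z\subseteq[n]$.

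First I would pass to complements. Since $z\cap y=\emptyset$ iff $z\subseteq[n]\setminus y$, we have $(Af)(y)=S-\sum_{\emptyset\neq z\subseteq[n]\setminus y}f(z)$, where $S=\sum_{z\in\cP([n])^+}f(z)$. This is precisely a statement about the zeta transform on the Boolean lattice, so I would extend $f$ to all of $\cP([n])$ by declaring $f(\emptyset)=0$ and set $g(w)=\sum_{z\subseteq w}f(z)$ for $w\subseteq[n]$. Then $g(\emptyset)=0$, $g([n])=S$, and the identity above becomes
\[
(Af)(y)\;=\;g([n])-g([n]\setminus y)\qquad\text{for all }y\in\cP([n])^+ ,
\]
the boundary case $y=[n]$ being consistent since then $[n]\setminus y=\emptyset$ and both sides equal $S$.

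The crux is Möbius inversion over $\cP([n])$: the assignment $f\mapsto g$ is a bijection of $\scr{R}^{\cP([n])}$, with inverse $f(z)=\sum_{w\subseteq z}(-1)^{|z|-|w|}g(w)$, so it suffices to show that $Af=0$ forces $g\equiv 0$. If $Af=0$, then $g([n]\setminus y)=g([n])$ for every nonempty $y$; as $y$ ranges over the nonempty subsets of $[n]$, $[n]\setminus y$ ranges over every proper subset, so $g$ takes the constant value $g([n])$ on all of $\cP([n])$. Evaluating at $\emptyset$ gives $g([n])=g(\emptyset)=0$, hence $g\equiv 0$, and therefore $f\equiv 0$ by Möbius inversion. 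Thus $\ker A=\{0\}$ and $A$ is invertible.

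I do not anticipate a real obstacle here; the argument is short and the only delicate point is the bookkeeping around the empty set: it is deliberately absent from the index set of the matrix, but it must be reinstated (with value $0$) in order to turn the partial sums $\sum_{z\cap y=\emptyset}f(z)$ into honest zeta transforms to which Möbius inversion applies.
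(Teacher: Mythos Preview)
Your proof is correct and considerably slicker than the paper's. The paper proceeds by induction on $n$: it orders $\cP([n+1])^+$ so that the subsets of $[n]$ come first, then $\{n+1\}$, then the sets of the form $y\cup\{n+1\}$ with $y\subseteq[n]$ nonempty, and observes that $A_{n+1}$ has the block form
\[
\begin{pmatrix} A_n & 0 & A_n \\ 0 & 1 & 1 \\ A_n & 1 & 1 \end{pmatrix},
\]
then checks by hand that a linear dependence among the rows of $A_{n+1}$ forces all coefficients to vanish, using the inductive hypothesis that the rows of $A_n$ are independent.

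Your route is different in kind: you recognise $A$ as (up to a constant and a complementation) the zeta transform on the Boolean lattice and invoke M\"obius inversion. This is both shorter and more informative, since it explains \emph{why} the matrix is invertible rather than verifying it recursively; it also makes the independence from the enumeration immediate, and in principle generalises to other lattices where M\"obius inversion is available. The paper's argument, by contrast, is self-contained and avoids appealing to M\"obius inversion, at the cost of a page of row manipulations. Your handling of the empty set---extending $f$ by $f(\emptyset)=0$ so that the partial sums become honest zeta transforms---is exactly the right bookkeeping, and the key step (that $Af=0$ forces $g$ constant, hence identically zero since $g(\emptyset)=0$) is clean.
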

Since we could not find a particularly enlightening proof of Lemma \ref{0612111} we leave it to the end of this section.
\begin{lem}\label{9712111} Let $a_1,...,a_n$ be the $*$-free generators of $\sfr{n}$ and $\mu:\{\bigcup_{i\in I}a_i:I\in \cP([n])^+\}\rightarrow \scr{R}$ any functional. Then there exists a unique signed measure $\lambda:\sfr{n}\rightarrow \scr{R}$ such that $(\forall I\in \cP([n])^+)(\lambda(\bigcup_{i\in I}a_i) = \mu(\bigcup_{i\in I}a_i))$. 
\end{lem}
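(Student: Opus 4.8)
The plan is to exploit that $\sfr{n}=\cP(\cP([n])^+)$ is a \emph{finite} Boolean algebra, so that a signed finitely additive measure on it is nothing more than an arbitrary real-valued function on its atoms; the atoms of $\sfr{n}$ are precisely the singletons $\{y\}$ with $y\in\cP([n])^+$, of which there are $2^n-1$. Thus I would first record the linear bijection $\lambda\mapsto(\lambda(\{y\}))_{y\in\cP([n])^+}$ between signed measures on $\sfr{n}$ and $\scr{R}^{2^n-1}$, and take $x_y:=\lambda(\{y\})$ as the unknowns to be solved for.

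Next I would translate the required identities into a linear system in the $x_y$. As an element of $\sfr{n}$ the $*$-free generator $a_i$ is the set $\{y\in\cP([n])^+:i\in y\}$, so for $I\in\cP([n])^+$ one has $\bigcup_{i\in I}a_i=\{y:y\cap I\neq 0\}$, whence by finite additivity $\lambda(\bigcup_{i\in I}a_i)=\sum_{y\cap I\neq 0}x_y$. Testing against singletons shows that the $2^n-1$ joins $\bigcup_{i\in I}a_i$ ($I\in\cP([n])^+$) are pairwise distinct, so $\mathrm{dom}(\mu)$ has exactly $2^n-1$ elements and the system is square. Enumerating $\cP([n])^+=\{y_1,\dots,y_{2^n-1}\}$ and letting the constraint index $I$ also range over this list, the requirement $\lambda(\bigcup_{i\in I}a_i)=\mu(\bigcup_{i\in I}a_i)$ becomes the linear system $\sum_{j:\,y_j\cap y_k\neq 0}x_{y_j}=\mu(\bigcup_{i\in y_k}a_i)$ for $k\in[2^n-1]$, whose coefficient matrix is exactly the matrix $(a_{kj})$ of Lemma \ref{0612111}.

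By Lemma \ref{0612111} this matrix is invertible, so the system has a unique solution $(x_y)_y$; this solution defines a unique signed measure $\lambda$ on $\sfr{n}$ with the desired property, and the uniqueness of $\lambda$ is immediate from the uniqueness of the solution together with the bijection of the first step. So the proof amounts to assembling these observations.

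As for difficulty: beyond carefully checking the dictionary between subsets of $[n]$, joins of the $*$-free generators, and atoms of $\sfr{n}$ — and recognizing that the coefficient matrix of the resulting square system is \emph{literally} the matrix of Lemma \ref{0612111} — there is essentially no obstacle here. The genuine content, namely invertibility of that matrix, has been deliberately isolated into Lemma \ref{0612111}, which is proved separately at the end of the section.
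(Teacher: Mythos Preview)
Your proposal is correct and follows essentially the same approach as the paper: reduce to a linear system in the atom-values $x_y=\lambda(\{y\})$, identify the coefficient matrix as precisely the matrix of Lemma \ref{0612111}, and invoke its invertibility. The paper's proof is just a terser version of what you wrote.
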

\begin{proof} Since we only need to decide the values that $\lambda$ should take on the atoms of $\sfr{n}$ we need only find a solution to the following set of linear equations:
$$
\sum_{y\in \cP([n])^+ \wedge q\cap y \neq 0} X_y = \mu(\bigcup_{i\in q} a_i) : q\in \cP([n])^+.
$$
These equations have a unique solution by Lemma \ref{0612111}. Now set $\lambda(\{y\}) = X_y$.
\end{proof}
\begin{df} Let $\gA$ be a finite Boolean algebra with $n$ atoms. A map $f:\gA\rightarrow \sfr{n}$ is called \index{good map@$\gA$-good map}$\gA$-\eb{good} if and only if the following hold:
\begin{itemize}
\item $f$ injectively maps the atoms of $\gA$ onto the $*$-free generators of $\sfr{n}$;
\item for each $a\in \gA$ we have $f(a) = \bigcup\{f(b):b\in\mathrm{atoms}(\gA)\wedge b\leq a\}$.
\end{itemize}
\end{df}
Of course in the context of the above definition any map sending the atoms of $\gA$ onto the $*$-free generators of $\sfr{n}$, induces an $\gA$-good map (by just taking unions).
\begin{lem}\label{2911111} Let $\gA$ be a finite Boolean algebra with $n$ atoms and let $f$ be an $\gA$-good map. Then $f$ is injective and satisfies the following properties:
\begin{itemize}
 \item $f(0) = 0_{\sfr{n}}$ and $f({1}) = {1}_{\sfr{n}}$,
 \item $(\forall a,b\in \gA)(f(a\cup b) = f(a)\cup f(b))$.
\end{itemize}
Moreover, for any functional $\mu$ on $\gA$, we can find a unique signed measure $\lambda$ on $\sfr{n}$ such that $(\forall a\in \gA)(\mu(a) = \lambda(f(a)))$.
\end{lem}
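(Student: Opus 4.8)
The plan is to dispatch the three elementary assertions by direct computation in the concrete algebra $\sfr n = \cP(\cP([n])^+)$, and then to obtain the measure statement by a clean reduction to Lemma \ref{9712111}. First I would fix notation: enumerate the atoms of $\gA$ as $b_1,\dots,b_n$ and set $a_i = f(b_i)$, so that $a_1,\dots,a_n$ are the $*$-free generators of $\sfr n$, i.e. $a_i = \{y\in\cP([n])^+ : i\in y\}$. By the second clause in the definition of an $\gA$-good map, if $a\in\gA$ has atom-decomposition $a = \bigcup_{i\in I}b_i$ (with $I\subseteq[n]$ uniquely determined) then $f(a) = \bigcup_{i\in I}a_i = \{y\in\cP([n])^+ : y\cap I\neq\emptyset\}$.

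Given this description, the elementary properties are immediate. For $f(0) = 0_{\sfr n}$: the zero element has no atom below it, so $f(0)$ is an empty union. For $f(1) = 1_{\sfr n}$: every atom lies below $1$, so $f(1) = \bigcup_{i\in[n]}a_i = \{y : y\cap[n]\neq\emptyset\} = \cP([n])^+ = 1_{\sfr n}$. For injectivity: if $I\neq I'$ choose $i$ in their symmetric difference, say $i\in I\setminus I'$; then the singleton $\{i\}$ belongs to $f(\bigcup_{j\in I}b_j)$ but not to $f(\bigcup_{j\in I'}b_j)$, so $f$ separates distinct elements of $\gA$. For $f(a\cup b) = f(a)\cup f(b)$: in any Boolean algebra an atom $c$ satisfies $c\leq a\cup b$ if and only if $c\leq a$ or $c\leq b$ (otherwise $c = c\cap(a\cup b) = (c\cap a)\cup(c\cap b) = 0$), so the set of atoms below $a\cup b$ is the union of the sets of atoms below $a$ and below $b$; applying $\gA$-goodness and distributing the union over these two sets gives the identity.

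For the \emph{moreover} clause I would transport $\mu$ along $f$: define $\nu\colon\{\bigcup_{i\in I}a_i : I\in\cP([n])^+\}\to\scr{R}$ by $\nu(\bigcup_{i\in I}a_i) = \mu(\bigcup_{i\in I}b_i)$, which is well-defined precisely because $I\mapsto\bigcup_{i\in I}a_i$ is injective on $\cP([n])$ (established above). Lemma \ref{9712111} then yields a unique signed measure $\lambda$ on $\sfr n$ with $\lambda(\bigcup_{i\in I}a_i) = \nu(\bigcup_{i\in I}a_i)$ for all non-empty $I$. To check $\mu(a) = \lambda(f(a))$ for every $a\in\gA$, write $a = \bigcup_{i\in I}b_i$: when $I\neq\emptyset$ this is the defining equation of $\lambda$, and when $I = \emptyset$ we have $a = 0$, $f(0) = 0_{\sfr n}$, and $\lambda(0_{\sfr n}) = 0 = \mu(0)$ since a signed measure and a functional both vanish at $0$. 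Uniqueness follows at once: any $\lambda'$ with $\mu(a) = \lambda'(f(a))$ for all $a$ agrees with $\nu$ on every $\bigcup_{i\in I}a_i$, hence coincides with $\lambda$ by the uniqueness half of Lemma \ref{9712111}.

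I do not expect a genuine obstacle here; the whole lemma is essentially bookkeeping once the combinatorial model $\sfr n = \cP(\cP([n])^+)$ and the invertibility input (Lemma \ref{0612111}, packaged as Lemma \ref{9712111}) are in hand. The only points requiring a moment's care are that the translation $\mu\mapsto\nu$ is well-defined — which is exactly the injectivity of $f$ restricted to joins of atoms — and the degenerate case $a = 0$, handled by the normalisation of functionals and of signed measures.
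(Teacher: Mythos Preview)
Your proof is correct and follows exactly the same route as the paper: the paper's own proof is the two-line ``The properties of $f$ follow by definition. The last part is just Lemma \ref{9712111},'' and you have simply written out the details that this sentence leaves implicit. There is nothing to add.
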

\begin{proof} The properties of $f$ follow by definition. The last part is just Lemma \ref{9712111}.\end{proof}
The fact that we can coherently put together  the maps from Lemma \ref{2911111} to build the map $\gf$ from Theorem \ref{2911114} is justified by following two lemmas.
\begin{lem}\label{2911112} Let $n\in \scr{N}$ and for each $i\in [n]$, let $a_i = \{y\in\cP([n])^+:i\in y\}$, so that $a_1,...,a_n$ are the $*$-free generators of $\sfr{n}$. Let $\gB$ be a finite Boolean algebra and let $b_1,...,b_n$ be $*$-free members of $\gB$. Then the map $a_i\mapsto b_i$ extends uniquely to a monomorphism from $\sfr{n}$ to $\gB$.
\end{lem}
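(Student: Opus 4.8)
The plan is to exhibit the intended monomorphism explicitly. Recall that $\sfr{n}=\cP(\cP([n])^+)$ has, for each $y\in\cP([n])^+$, an atom $\{y\}$, and since $a_j=\{z\in\cP([n])^+:j\in z\}$ one computes directly that $\{y\}=\bigcap_{j\in y}a_j\cap\bigcap_{j\notin y}a_j^c$ inside $\sfr{n}$; moreover $a_i=\bigcup\{\{y\}:i\in y\}$ and every element of $\sfr{n}$ is the join of the atoms below it. Hence $\{a_i:i\in[n]\}$ generates $\sfr{n}$, so any homomorphism $g\colon\sfr{n}\to\gB$ with $g(a_i)=b_i$ is forced: it must send $\{y\}$ to $\bigcap_{j\in y}b_j\cap\bigcap_{j\notin y}b_j^c$, hence is uniquely determined on all of $\sfr{n}$. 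This disposes of uniqueness, and only existence and injectivity remain.

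For $J\subseteq[n]$ put $b_J=\bigcap_{j\in J}b_j\cap\bigcap_{j\notin J}b_j^c\in\gB$, and write $b_y=b_J$ when $J=y\in\cP([n])^+$. I would establish three elementary facts. First, the cells are pairwise disjoint: if $j$ lies in exactly one of $J,J'$, say $j\in J\setminus J'$, then $b_J\leq b_j$ and $b_{J'}\leq b_j^c$, so $b_J\cap b_{J'}=0_\gB$. Second, they partition the unit: expanding $1_\gB=\bigcap_{j\in[n]}(b_j\cup b_j^c)$ by distributivity gives $\bigcup_{J\subseteq[n]}b_J$, and the single term $J=\emptyset$ equals $\bigcap_{j\in[n]}b_j^c=\bigl(\bigcup_{j\in[n]}b_j\bigr)^c=1_\gB^c=0_\gB$ by the second clause of $*$-freeness, so in fact $\bigcup_{y\in\cP([n])^+}b_y=1_\gB$. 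Third, each $b_y$ with $y\in\cP([n])^+$ is nonzero, which is precisely the first clause of Definition \ref{0909122} applied to $J=y$. Thus $\{b_y:y\in\cP([n])^+\}$ is a partition of $1_\gB$ into nonzero pieces, indexed by the atoms of $\sfr{n}$.

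Now define $h\colon\sfr{n}\to\gB$ by $h(S)=\bigcup_{y\in S}b_y$ (so $h(\emptyset)=0_\gB$). It visibly preserves finite joins and sends $0$ to $0$, and it preserves complements: for $S\in\sfr{n}$ the element $h(\cP([n])^+\setminus S)=\bigcup_{y\notin S}b_y$ is, by the first two facts, the Boolean complement of $h(S)$; hence $h$ preserves meets and the unit as well, so it is a Boolean homomorphism. It has the right values on generators: $h(a_i)=\bigcup_{y:\,i\in y}b_y=b_i$, since $b_i=b_i\cap 1_\gB=\bigcup_{y\in\cP([n])^+}(b_i\cap b_y)$ and $b_i\cap b_y$ is $b_y$ if $i\in y$ and $0_\gB$ otherwise. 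Finally, $h$ is injective: if $S\neq\emptyset$, pick $y\in S$; then $h(S)\geq b_y>0_\gB$ by the third fact, so $h(S)=0_\gB$ forces $S=\emptyset$. Together with the uniqueness observed above, this proves the lemma.

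There is no deep obstacle here; the only point that genuinely uses the full strength of Definition \ref{0909122} (rather than merely ``every one of the $2^n$ cells is nonzero'') is the second fact: the hypothesis $\bigcup_i b_i=1_\gB$ is exactly what kills the empty cell, so that the surviving cells are indexed by $\cP([n])^+$ and line up with the atoms of $\sfr{n}$. One could instead argue abstractly — extend $a_i\mapsto b_i$ to a homomorphism out of the free Boolean algebra $\fr{n}$ by its universal property, then pass to $\fr{n}$ relativized below the join of its generators, which is $\sfr{n}$ and maps into $\gB$ because $\bigcup_i b_i=1_\gB$ — but the direct computation above is shorter and hands us injectivity for free from the nonvanishing of the cells.
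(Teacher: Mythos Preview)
Your proof is correct and follows essentially the same approach as the paper: both define the extension on atoms by $\{y\}\mapsto\bigcap_{j\in y}b_j\cap\bigcap_{j\notin y}b_j^c$ and extend by joins. The paper's proof is a single line recording this formula, whereas you have carefully verified that the resulting cells $b_y$ form a partition of $1_\gB$ into nonzero pieces (using both clauses of $*$-freeness), that $h$ is a homomorphism, that $h(a_i)=b_i$, and that $h$ is injective---all details the paper leaves implicit.
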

\begin{proof} We need only define the embedding on the atoms of $\sfr{n}$ and this is given by
$$
\{y\} = \left(\bigcap_{j\in y} a_j\right)\cap\left(\bigcap_{j\not\in y}a^c_{j}\right)\mapsto \left(\bigcap_{j\in y} b_j\right)\cap\left(\bigcap_{j\not\in y}b^c_{j}\right).
$$
\end{proof}
\begin{lem}\label{2911113} Let $\gA$ be a subalgebra of a finite Boolean algebra $\gB$. Let $f$ be $\gA$-good and $g$ be $\gB$-good. Let $m$ be the number of atoms of $\gA$ and $n$ the number of atoms of $\gB$. Then there exists an embedding $F:\sfr{m}\rightarrow \sfr{n}$ such that 
\begin{equation}\label{1009121}
g\restriction \gA= F\circ f.
\end{equation} 
\end{lem}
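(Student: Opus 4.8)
The plan is to produce $F$ by a single application of Lemma \ref{2911112}. First I would fix enumerations matching $f$ and $g$: list the atoms of $\gA$ as $p_1,\dots,p_m$ so that $f(p_i)$ is the $i$-th $*$-free generator $a_i = \{y\in\cP([m])^+ : i\in y\}$ of $\sfr{m}$ (possible since $f$ is $\gA$-good), and list the atoms of $\gB$ as $q_1,\dots,q_n$ so that $g(q_j)$ is the $j$-th $*$-free generator $b_j$ of $\sfr{n}$. Because $\gA$ is a subalgebra of $\gB$, each $p_i$ is a disjoint union of atoms of $\gB$, so there is a partition $[n] = \bigsqcup_{i\in[m]} J_i$ into non-empty blocks with $p_i = \bigsqcup_{j\in J_i} q_j$. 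Put $c_i = \bigcup_{j\in J_i} b_j \in \sfr{n}$.

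The heart of the argument is to check that $c_1,\dots,c_m$ are $*$-free in $\sfr{n}$. First, $\bigcup_{i\in[m]} c_i = \bigcup_{j\in[n]} b_j = 1_{\sfr{n}}$ because the $J_i$ cover $[n]$ and the $b_j$ are $*$-free. Now fix a non-empty $J\subseteq[m]$ and pick, for each $i\in J$, an index $j_i\in J_i$; the $j_i$ are pairwise distinct since the $J_i$ are pairwise disjoint. Set $K = \{j_i : i\in J\}$. The cell $\bigl(\bigcap_{j\in K} b_j\bigr)\cap\bigl(\bigcap_{j\notin K} b_j^c\bigr)$ is non-zero by $*$-freeness of the $b_j$, and I claim it is contained in $\bigl(\bigcap_{i\in J} c_i\bigr)\cap\bigl(\bigcap_{i\notin J} c_i^c\bigr)$: for $i\in J$ the cell lies below $b_{j_i}\leq c_i$; for $i\notin J$ we have $c_i^c = \bigcap_{j\in J_i} b_j^c$, and since $J_i\cap K=\emptyset$ the cell lies below $b_j^c$ for every $j\in J_i$. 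Hence the latter cell is non-zero, so the $c_i$ are $*$-free. By Lemma \ref{2911112} there is a (unique) monomorphism $F\colon\sfr{m}\to\sfr{n}$ with $F(a_i)=c_i$ for all $i$.

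It remains to verify $g\restriction\gA = F\circ f$. Both sides send finite unions to finite unions ($f$ and $g$ by $\gA$- and $\gB$-goodness, Lemma \ref{2911111}; $F$ because it is a Boolean homomorphism), and every element of $\gA$ is a union of atoms of $\gA$, so it suffices to check agreement on each $p_i$. There $F(f(p_i)) = F(a_i) = c_i = \bigcup_{j\in J_i} b_j$, while, since $\{q_j : j\in J_i\}$ is precisely the set of atoms of $\gB$ below $p_i$, $\gB$-goodness of $g$ gives $g(p_i) = \bigcup_{j\in J_i} g(q_j) = \bigcup_{j\in J_i} b_j$. The two sides coincide, completing the proof.

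The step I expect to be the main obstacle is the $*$-freeness verification for the $c_i$ — specifically, keeping the bookkeeping between the partition $\{J_i\}$ and the defining cells straight; the compatibility identity $g\restriction\gA = F\circ f$ is then routine because everything in sight preserves unions.
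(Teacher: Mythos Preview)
Your argument is correct and follows essentially the same route as the paper: the paper defines $F' = g\circ f^{-1}$ on $f[\gA]$, checks that it sends the $*$-free generators of $\sfr{m}$ to $*$-free elements of $\sfr{n}$, and then invokes Lemma~\ref{2911112}; your $c_i$ are exactly $F'(a_i)=g(p_i)$, so the two constructions coincide. If anything, your proof is more careful: the paper verifies only the union condition $\bigcup_i c_i = 1_{\sfr{n}}$ and leaves the non-degeneracy of the cells implicit, whereas you spell out the choice of $K\subseteq[n]$ witnessing each cell $\bigl(\bigcap_{i\in J} c_i\bigr)\cap\bigl(\bigcap_{i\notin J} c_i^c\bigr)\neq 0$.
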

\begin{proof} Let $F':f[\gA]\rightarrow g[\gB]$ be the map $g\circ f^{-1}$. By Lemma \ref{2911111} we see that
$$
\bigcup_{a\in f[\gA]}F'(a) = \bigcup_{a\in f[\gA]}g\circ f^{-1}(a) = g\circ f^{-1}(\bigcup_{a\in f[\gA]}a) = g({1}_\gA) = g({1}_\gB) = {1}_{\sfr{n}},
$$
and so the map $F'$ sends the $*$-free generators of $\sfr{m}$ to $*$-free members of $\sfr{n}$. By Lemma \ref{2911112} we can find an embedding $F:\sfr{m}\rightarrow \sfr{n}$ which agrees with $F'$ on $f[\gA]$. 
\end{proof}
\begin{proof}[Proof of Theorem \ref{2911114}.]\label{proofofmain} Everything on direct limits here is taken from \cite[Pages 49-51]{hodges93}. Fix a countable Boolean algebra $\gA$ let $(\gA_i)_{i\in\scr{N}}$ be a sequence of finite subalgebras of $\gA$ such that $\gA_i\subseteq \gA_{i+1}\subseteq \gA$. For each $i$, let $n_i=|\mathrm{atoms}(\gA_i)|$ and, by choosing the $\gA_i$ appropriately, see to it that $n_i<n_{i+1}$. For each $i$, let $\gC_i = \sfr{n_i}$ and let $f_i$ be an $\gA_i$-good map. For $i<j$, let $f_{i,j}: \gC_i\rightarrow \gC_j$ be the embeddings promised by Lemma \ref{2911113}, with respect to the good maps $f_i$. If $i=j$ then we let $f_{i,j} = \mathrm{Id}$ in $\gA_i$. Now suppose that $i\leq j\leq k$ and let $a_1,...,a_l$ be the $*$-free generators of $\gC_i$. By applying (\ref{1009121}) appropriately, it is straightforward to compute that both $f_{i,k}$ and $f_{j,k}\circ f_{i,j}$ map $a_m$ to $f_k(a_m)$, for each $m\in [l]$. Thus both these embeddings map the $*$-free generators of $\gC_i$ to the same $*$-free members of $\gC_k$ and so, by the uniqueness part of Lemma \ref{2911112}, we see that 
$$
f_{i,k} = f_{j,k}\circ f_{i,j}.
$$  
This shows that $((\scr{N},\leq),(\gC_i)_{i\in \scr{N}},(f_{i,j})_{i,j\in \scr{N}})$ is a directed system.
Let $\gB$ be the corresponding direct limit and let $g_i:\gC_i\rightarrow \gB$ be the corresponding limit maps. We have the following commutative diagram for $i\leq j$:
\begin{center}
\begin{tikzpicture}[scale = 0.5]
\matrix [column sep=1.1cm, row sep=1.1cm]
{
\node[]() {}; & \node[](gb) {$\gB$}; &  \node[]() {};\\
\node[](gb1) {$\gC_i$}; & \node[]() {}; & \node[](gb3) {$\gC_j$};\\
\node[](a1) {$\gA_i$}; & \node[]() {} ;  & \node[](a3) {$\gA_j$} ;\\
};
\draw [ ->] (gb1) -- (gb)
node [above,midway] {$g_i$};
\draw [ ->] (gb3) -- (gb)
node [above,midway] {$g_{j}$};
\draw [ ->] (gb1) -- (gb3)
node [below,midway] {$f_{i,j}$};
\draw [ ->] (a1) -- (gb1)
node [left,midway] {$f_i$};
\draw [ ->] (a3) -- (gb3)
node [right,midway] {$f_j$};
\draw [ ->] (a1) -- (a3)
node [above,midway] {$\mathrm{Id}$};
\end{tikzpicture}
\end{center}
Set $\gf(a) = (g_i\circ f_i)(a)$ for any $i$ such that $a\in \gA_i$. Let us now check that $\gf$ satisfies the desired properties. The fact that $\gf$ is injective follows since each $g_{i}$ is an embedding (and in particular injective), and each $f_{i}$ is a $\gA_i$-good map (and in particular injective). Properties (T.\ref{TT2}) and (T.\ref{TT3}) follow by the properties of good maps. Property (T.\ref{TT1}) follows since for every $b\in \gB$, we can find a finite subalgebra $\gA'\subseteq \gA$, such that $b\in \La\gf[\gA]\Ra$.\\\\
Let $\mu:\gA\rightarrow \scr{R}$ be any functional.  By the final part of Lemma \ref{2911111} for each $i$ we can find a unique measure $\lambda_i:\gC_i\rightarrow \scr{R}$ such that $(\forall a\in \gA_i)(\mu(a) = \lambda_i(f_i(a)))$. We now define the measure $\lambda:\gB\rightarrow \scr{R}$ by
$$
\lambda(b) = \lambda_i(g^{-1}_i(b))
$$
for any $i$ such that $b\in \mathrm{ran}(g_i)$. To see that this is well defined we just notice that for $i\leq j$, the uniqueness of $\lambda_i$ implies that $\lambda_{j}\circ f_{i,j} = \lambda_i$.\\\\
Suppose now that $\g{D}$ is a Boolean algebra and $\g{g}:\gA\rightarrow \g{D}$ is an injective map satisfying (T.\ref{TT1}), (T.\ref{TT2}) and (T.\ref{TT3}). Let $(\gA_i)_{i\in\scr{N}}$ and $(n_i)_{i\in\scr{N}}$ be as above. For each $i\in \scr{N}$, let $\g{D}_i = \g{g}[\gA_i]$, $\g{g}_i = \g{g} \restriction \gA_i$ and $p_i:\g{D}_i\rightarrow \sfr{n_i}$ be any isomorphism which injectively maps the $*$-free generators of $\g{D}_i$ to the $*$-free generators of $\sfr{n_i}$. For each $i$, let $f_{i,i+1} = p_{i+1}\circ p_{i}^{-1}$ and $f_i = p_i\circ \g{g}_i$. For $i < j$ let, $f_{i,j} = f_{j-1,j}\circ \cdots f_{i+1,i+2} \circ f_{i,i+1}$ and $f_{i,i} =\mathrm{Id}$. The system $((\scr{N},\leq),(f_{i,j})_{i,j\in \scr{N}},(\sfr{n_i})_{i\in \scr{N}})$ is a directed system. Let $\gB$ be its direct limit and $g_i$ be the corresponding limit maps. Define $\gf:\gA\rightarrow \gB$ by $\gf(a) = (g_i\circ f_i)(a)$, for any $i$ such that $a\in \gA_i$. As above, we see that $\gB$ and $\gf$ satisfy the properties in the statement of Theorem \ref{2911114}, with respect to $\gA$. Finally, for each $i\in \scr{N}$, let $h_i = p_i^{-1}$ and notice that by construction, for $i \leq j$ we have $h_i = h_j\circ f_{i,j}$. Thus we can find an isomorphism $F:\g{B}\rightarrow \g{D}$ such that $h_i = F\circ g_i$, for each $i$. In particular, given a functional $\mu$ on $\gA$, if we let $\lambda$ be the signed measure on $\gB$ defined by $(\forall a)(\mu(a) = \lambda(\gf(a)))$, then we can define a signed measure on $\g{D}$ by $\varphi(a) = \lambda(F^{-1}(a))$, and for each $a\in \gA$ we have,
\begin{eqnarray*}
\mu(a) = \lambda(\gf(a)) = \lambda((g_i\circ f_i)(a)) = \lambda((F^{-1}\circ h_i \circ f_i)(a)) &=& \lambda((F^{-1}\circ p_i^{-1} \circ f_i)(a))\\
&=& \lambda((F^{-1}\circ \g{g}_i)(a))\\
&=& \varphi(\g{g}(a)).
\end{eqnarray*}

\end{proof}
As we have already mentioned, even for very simple submeasures the corresponding measure obtained from Theorem \ref{2911114} may be signed and, even worse, unbounded, as the following example shows.
\begin{exmm}\label{unbounded} For each $n$ let $\mu:\cP([n])\rightarrow \scr{R}$ be the submeasure $\mu([n]) = 1$, $\mu(0) = 0$ and $\mu(a) = \frac{1}{2}$, 
otherwise. If $\lambda:\sfr{n}\rightarrow \scr{R}$ is the corresponding measure from Lemma \ref{2911111} then 
$$
(\exists a\in \sfr{n})(\lambda(a) = -\frac{1}{2}{n \choose 2})
$$
In particular, in the context of Theorem \ref{2911114} and its proof, if we take $\mu:\gA\rightarrow \scr{R}$ to be $(\forall a\in \gA\setminus\{0,1\})(\mu(a) = \frac{1}{2})$ and $\mu(1) = 1$ then for each $i$, the algebra $\La\gf[\gA_i]\Ra$ will contain an element of  $\lambda$-measure $ -\frac{1}{2}{n_i \choose 2}$. Thus $\inf_{b\in\gB}\lambda(b) = -\infty$.
\end{exmm}
\begin{proof} Notice that $I\in \cP([n])^+$ we have $\{I\} = (\bigcap_{i\in I}a_i)\setminus (\bigcup_{i\not \in I}a_i)$. Notice also that $\lambda(\{\{i\}\}) = \lambda(\bigcup_{l\in [n]}a_l) - \lambda(\bigcup_{l\in [n]\setminus \{i\}}a_l)$. We also have that for $i\neq j$ we have $\lambda(\{\{i,j\}\})  = \lambda(\bigcup_{l\in [n]}a_l) - (\lambda(\bigcup_{l\in [n]\setminus\{i,j\}} a_l) + \lambda(\{\{i\}\})+\lambda(\{\{j\}\}))$. This shows that if $i\neq j$ then 
$$
\lambda(\{\{i,j\}\}) = -\lambda(\bigcup_{l\in [n]}a_l)-\lambda(\bigcup_{l\in [n]\setminus \{i,j\}}a_l) + \lambda(\bigcup_{l\in [n]\setminus \{i\}} a_l)+\lambda(\bigcup_{l\in [n]\setminus \{j\}}a_l)).
$$
Thus we can take $a = \{y:y \in [[n]]^2\}$.\end{proof}
It is not clear to us how to predict when a submeasure will generate a non-negative measure, or even just a bounded signed measure.\\\\
Let us now define explicitly an instance of Theorem \ref{2911114}. Let $(X_i)_{i\in\scr{N}}$ be a sequence of finite non-empty sets. Let $X^{(n)} = \prod_{i\in [n]}X_i$ and $X = \prod_{i\in\scr{N}}X_i$. For convenience we assume that for each $i$, we have 
\begin{equation}\label{1309123}
|X_i|>1,
\end{equation} and we also set $X^{(0)} = \{\emptyset\}$. Let $T_1 = \cP(X_1)^+$ and
\begin{equation}\label{Tidef}
T_{i+1} = \{A\subseteq X^{(i+1)}:(\forall t\in X^{(i)})(\exists s\in A)(s\restriction [i] = t)\}.
\end{equation}
Let $T^{(n)} = \prod_{i\in [n]} T_i$ and $T = \prod_{i\in\scr{N}} T_i$. Let $\gA = \mathrm{Clopen}(X)$ and $\gB= \mathrm{Clopen}(T)$.
\begin{df}\label{explicitf} Let $\gf:\gA\rightarrow \gB$ be defined as follows. For every $n\in \scr{N}$ and $t\in X^{(n)}$ we set
\begin{eqnarray*}
\gf([t]) &=& \bigcup\{[f]: f\in T^{(n)}\wedge (\forall i\in [n])(t\restriction [i]\in f(i))\}\\
&=&\{f\in T: (\forall i\in [n])(t\restriction [i]\in f(i))\}
\end{eqnarray*}
We then extend $\gf$ to all members of $\gA$ by taking unions.
\end{df}
\begin{prop}\label{1309121} The function $\g{f}$ is injective and satisfies (T.\ref{TT2}) and (T.\ref{TT3}) of Theorem \ref{2911114}.
\end{prop}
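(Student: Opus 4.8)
The plan is to isolate a single combinatorial lemma and derive everything from it. Throughout, for $t\in X^{(N)}$ and $f\in T$ say that \emph{$f$ captures $t$} if $t\restriction[i]\in f(i)$ for all $i\in[N]$; then by Definition \ref{explicitf} we have $f\in\gf([t])$ iff $f$ captures $t$, and (for a finite $F$ of sequences of length $N$) $f\in\gf\big(\bigcup_{t\in F}[t]\big)$ iff $f$ captures some $t\in F$.

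\emph{Well-definedness and (T.\ref{TT3}).} First I would establish the coherence identity $\gf([t])=\bigcup\{\gf([s]):s\in X^{(|t|+1)},\,s\restriction[|t|]=t\}$. The inclusion $\supseteq$ is trivial, and $\subseteq$ is the one place where the defining property of $T_{i+1}$ in (\ref{Tidef}) enters: if $f$ captures $t$, then $f(|t|+1)\in T_{|t|+1}$ must contain \emph{some} $s$ with $s\restriction[|t|]=t$, and this $s$ is then also captured by $f$. Iterating, $\gf([t])=\bigcup\{\gf([t']):t'\in X^{(M)},\,t'\restriction[|t|]=t\}$ for every $M\geq|t|$, and since for $t'\in X^{(M)}$ one has $[t']\subseteq\bigcup_{t\in F}[t]$ iff $t'$ extends some $t\in F$ (when $M$ is at least the maximal length in $F$), refining all cylinders in a representation $a=\bigcup_{t\in F}[t]$ to a common level $M$ shows that $\bigcup_{t\in F}\gf([t])=\bigcup\{\gf([t']):t'\in X^{(M)},\,[t']\subseteq a\}$ depends only on $a$. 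Hence $\gf$ is well defined on all of $\gA$. Writing two elements $a,b$ over a common level then gives (T.\ref{TT3}) at once, and $a\leq b\Rightarrow b=a\cup b$ makes $\gf$ monotone.

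\emph{The capturing lemma.} The heart of the matter is the claim: for every $N$ and every nonempty $S\subseteq X^{(N)}$ there is $f\in T$ whose set of captured length-$N$ sequences is exactly $S$. To build such an $f$, put $P_i=\{s\restriction[i]:s\in S\}$ and let $f(i)$ consist of $P_i$ together with, for each $s'\in X^{(i-1)}\setminus P_{i-1}$, one arbitrarily chosen extension of $s'$ in $X^{(i)}$ (take $f(i)=X^{(i)}$ for $i>N$). Since $P_i$ projects onto $P_{i-1}$ and the added points cover $X^{(i-1)}\setminus P_{i-1}$, each $f(i)$ lies in $T_i$; and since the only member of $f(i)$ extending a sequence in $P_{i-1}$ already belongs to $P_i$, an induction on $i$ shows that any $t$ captured by $f$ satisfies $t\restriction[i]\in P_i$ for all $i\leq N$, hence $t\in P_N=S$. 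The reverse inclusion is immediate from $P_i\subseteq f(i)$.

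\emph{Injectivity and (T.\ref{TT2}).} For injectivity, if $a\neq b$ then, after refining to a common level $N$, pick $t\in X^{(N)}$ with $[t]\subseteq a$ and $[t]\cap b=\emptyset$; applying the lemma with $S=\{t\}$ yields an $f$ capturing $t$ only, so $f\in\gf([t])\subseteq\gf(a)$ while $f\notin\gf(b)$, whence $\gf(a)\neq\gf(b)$. For (T.\ref{TT2}), let $\gA'\subseteq\gA$ be a finite subalgebra and choose $N$ so large that $\gA'$ lies in the subalgebra generated by $\{[t]:t\in X^{(N)}\}$; then the atoms $a_1,\dots,a_k$ of $\gA'$ correspond to a partition $X^{(N)}=F_1\sqcup\cdots\sqcup F_k$ into nonempty blocks with $a_j=\bigcup_{t\in F_j}[t]$. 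Now $\bigcup_j\gf(a_j)=\gf(X)=\{f\in T:f(1)\neq\emptyset\}=T=1_\gB$, using $T_1=\cP(X_1)^+$. Given nonempty $J\subseteq[k]$, fix a transversal $S=\{t_j:j\in J\}$ with $t_j\in F_j$ and apply the lemma: the resulting $f$ captures exactly $S$, so $f\in\gf(a_j)$ for $j\in J$, while for $j\notin J$ disjointness gives $F_j\cap S=\emptyset$, so $f$ captures nothing in $F_j$ and $f\notin\gf(a_j)$. Thus $\big(\bigcap_{j\in J}\gf(a_j)\big)\cap\big(\bigcap_{j\notin J}\gf(a_j)^c\big)\neq 0$, which together with $\bigcup_j\gf(a_j)=1_\gB$ is precisely the $*$-freeness of $\gf[\mathrm{atoms}(\gA')]$.

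The only real obstacle is the capturing lemma, and inside it the fact that each enlarged coordinate $f(i)$ must remain a legitimate member of $T_i$ while contributing no spurious captured sequence; condition (\ref{Tidef}) is used in both directions here — it is what makes $\gf$ well defined at all, and it is exactly the slack that lets one fatten each $f(i)$ into an element of $T_i$. Everything else is routine cylinder-set bookkeeping.
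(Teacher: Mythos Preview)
Your proof is correct and follows essentially the same route as the paper: the key step is your ``capturing lemma,'' which is exactly the second half of Lemma~\ref{1309122} (the paper calls your ``captures'' relation ``generates''), and injectivity and (T.\ref{TT2}) are then deduced in the same way. You are somewhat more explicit than the paper in two places---you verify well-definedness of $\gf$ via the coherence identity (the paper takes this for granted in Definition~\ref{explicitf}), and you check (T.\ref{TT2}) directly for an arbitrary finite subalgebra, whereas the paper only checks it for the level-$n$ subalgebras $\La\{[t]:t\in X^{(n)}\}\Ra$ and tacitly uses that $*$-freeness passes to coarsenings.
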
 
Before we prove Proposition \ref{1309121}, it will be helpful to record the following.
\begin{df} For $f\in T^{(n)}$ say that $t\in X^{(n)}$ \eb{generates} $f$\index{generates@$t$ generates $f$} if and only if
$$
(\forall i\in[n])(t\restriction [i]\in f(i)).
$$
\end{df}
\begin{lem}\label{1309122} For every $n\in \scr{N}$ and $f\in T^{(n)}$, there exists $t\in X^{(n)}$ that generates $f$. Conversely, for every $n$ and $A\subseteq X^{(n)}$ there exists an $f\in T^{(n)}$ that is generated by precisely the members of $A$.
\end{lem}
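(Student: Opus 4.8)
The plan is to prove both assertions by induction on $n$, exploiting nothing beyond the defining property of $T_{i+1}$: that every $A\in T_{i+1}$ restricts \emph{onto} all of $X^{(i)}$. It is convenient first to reformulate. For $f\in T^{(n)}$ write $\mathrm{gen}(f)=\{t\in X^{(n)}:(\forall i\in[n])\,t\restriction[i]\in f(i)\}$; the lemma then says $\mathrm{gen}(f)\neq\emptyset$ for every $f\in T^{(n)}$, and that $f\mapsto\mathrm{gen}(f)$ maps $T^{(n)}$ onto $\cP(X^{(n)})^{+}$. Implicitly the second half needs $A\neq\emptyset$, which is exactly what the first half forces, so I would state it for nonempty $A$; no $f\in T^{(n)}$ with $n\ge 1$ has empty generator set.

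For $\mathrm{gen}(f)\neq\emptyset$: when $n=1$ any $t\in f(1)\in\cP(X_1)^{+}$ works, and for the step, given $f\in T^{(n+1)}$, its truncation $f\restriction[n]=(f(1),\dots,f(n))$ lies in $T^{(n)}$, so by induction there is $t'\in X^{(n)}$ with $t'\restriction[i]\in f(i)$ for all $i\le n$; since $f(n+1)\in T_{n+1}$ restricts onto $X^{(n)}$, I pick $s\in f(n+1)$ with $s\restriction[n]=t'$, and then $s\in X^{(n+1)}$ satisfies $s\restriction[i]=t'\restriction[i]\in f(i)$ for $i\le n$ and $s\restriction[n+1]=s\in f(n+1)$, i.e.\ $s\in\mathrm{gen}(f)$.

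For surjectivity I would induct again on $n$, with $\emptyset\neq A\subseteq X^{(n)}$. For $n=1$ take $f=(A)$: then $A\in T_1$ and $\mathrm{gen}(f)=A$. For the step, let $\emptyset\neq A\subseteq X^{(n+1)}$ and put $A_n=\{t\restriction[n]:t\in A\}\subseteq X^{(n)}$, which is nonempty; by the induction hypothesis fix $g\in T^{(n)}$ with $\mathrm{gen}(g)=A_n$. Define $f\in T^{(n+1)}$ by $f(i)=g(i)$ for $i\le n$ and $f(n+1)=A\cup B$, where for each $u\in X^{(n)}\setminus A_n$ I choose some $s_u\in X^{(n+1)}$ with $s_u\restriction[n]=u$ (possible since $X_{n+1}\neq\emptyset$) and set $B=\{s_u:u\in X^{(n)}\setminus A_n\}$. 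Then $\{s\restriction[n]:s\in f(n+1)\}=A_n\cup(X^{(n)}\setminus A_n)=X^{(n)}$, so $f(n+1)\in T_{n+1}$ and $f\in T^{(n+1)}$; and $t\in\mathrm{gen}(f)$ iff $t\in f(n+1)$ and $t\restriction[n]\in\mathrm{gen}(g)=A_n$. Every element of $A$ satisfies both, whereas each $s_u\in B$ has $s_u\restriction[n]=u\notin A_n$, so $\mathrm{gen}(f)=A$.

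I anticipate no serious obstacle: the only point requiring care is checking, in the inductive step of the surjectivity half, that the padding set $B$ contributes no spurious generators — which is precisely why $B$ is taken over indices $u\in X^{(n)}\setminus A_n$ lying outside $\mathrm{gen}(g)$. One could instead distribute the padding among the lower levels, but concentrating it at the top level keeps each verification a one-liner.
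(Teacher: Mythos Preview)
Your proof is correct and follows essentially the same route as the paper: both parts are handled by induction on $n$, and in the surjectivity step you form $f(n+1)$ as $A$ together with ``padding'' elements extending each $u\in X^{(n)}\setminus A_n$, exactly as the paper does (the paper fixes a single $x\in X_{n+1}$ and pads with $u^\frown x$, but this is the same idea). Your observation that the converse can only hold for nonempty $A$ is a genuine correction to the paper's statement --- since $\mathrm{gen}(f)\neq\emptyset$ by the first half, no $f$ is generated by precisely the members of $\emptyset$, and indeed the base case $f=(A)\in T^{(1)}$ already requires $A\in\cP(X_1)^+$.
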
 
\begin{proof}  The first claim may be seen by induction on $n$ using (\ref{Tidef}). Indeed, for the case $n=1$, any member of $f(1)$ generates $f$. Suppose it is true for $n$ and let $f\in T^{(n+1)}$. By induction, find $t\in X^{(n)}$ that generates $f\restriction [n]$. By (\ref{Tidef}), there exists $s\in f(n+1)$ such that $s\restriction [n] = t$ and so, since $t$ generates $f\restriction [n]$, it must be the case that $s$ generates $f$. The second claim also proceeds by induction on $n$. For the case $n=1$, if $A\subseteq X^{(1)}$ then the function $\{(1,A)\}\in T^{(1)}$ and is generated precisely by $A$. Now suppose it is true for $n$, and let $A\subseteq X^{(n+1)}$. Let $g\in T^{(n)}$ be generated by precisely the members of $B:=\{t\restriction [n]:t\in A\}$. Now fix $x\in X_{n+1}$ and let $f = g^\frown (A\cup \{t^\frown x: t\in X^{(n)}\setminus B\})$. It is clear that $f\in T^{(n+1)}$. If $t\not\in A$, then $t\restriction [n]\not\in B$ and so $t\restriction [n]$ cannot generated $g$, which means that $t$ cannot generate $f$. If $t\in A$, then by definition $t\in f(n+1)$, and since $t\restriction [n]$ generates $g$, we must have that $t$ generates $f$.
\end{proof}
\begin{proof}[Proof of Proposition \ref{1309121}] This all follows from Lemma \ref{1309122}. For injectivity, Let $n\in \scr{N}$ and suppose that $C,B\subseteq X^{(n)}$, such that $C\neq B$. Without loss of generality, we can find $t\in C\setminus B$. Now let $f\in T^{(n)}$ be generated only by $t$. Then $f\in \bigcup_{s\in C}\g{f}([s])\setminus \bigcup_{s\in B}\g{f}([s])$. For property (T.\ref{TT2}), it is enough to check that for each $n\in \scr{N}$, the collection $\{\gf([t]):t\in X^{(n)}\}$ forms a $*$-free collection in $\gB$. For this just observe that if $A$ is a non-empty subset of $X^{(n)}$ and $f\in T^{(n)}$ is generated by precisely the members of $A$, then 
$$
f\in \left(\bigcap_{t\in A}\gf{[t]}\right)\cap\left(\bigcap_{t\in A}\gf{[t]}\right)^c\neq \emptyset.
$$
Property (T.\ref{TT3}) follows from how we constructed $\gf$ (by taking unions). \end{proof}
As a final remark, notice that if $f:\gC\rightarrow \gC'$ is such that we always have $f(c\cup d) = f(c)\cup f(d)$ then for any (non-negative) measure $\lambda$ on $\gC'$ one can define a submeasure $\mu$ on $\gC$ by $\mu(c) = \lambda(f(c))$. This raises the following question: If $\lambda$ is the Lebesgue measure on $\gB$, then what submeasure do we get on $\gA$? The counting required to understand this submeasure $\mu$ (say), for general $X_i$, is quite complicated. In the case where we restrict to $X_i = [2]$, the counting becomes manageable, and it can be shown that $\mu$ is not pathological and is not exhaustive, witnessed by an antichain of Borel sets of length continuum, once we have extended $\mu$ to $\mathrm{Borel}(X)$ as in (\ref{100}).\footnote{See \cite[\S 7]{phd1}.}
\begin{proof}[Proof of Lemma \ref{0612111}]\label{proofofmatrix} By induction on $n$. For the case $n=1$, the matrix in question is the identity, so let us show that this is true for $n+1$ assuming it is true for $n$. Let $m = 2^n-1$ and $m' = 2^{n+1}-1$. Enumerate $\cP([n+1])^+ = \{y_i:i\in [m']\}$ so that $\{y_i:i\in [m]\}$ is an enumeration of $\cP([n])^+$, $y_{m+1} = \{n\}$ and $y_{i+m + 1} = y_i\cup\{n\}$ for $i\in [m]$. Let $A_n$ be $m\times m$ matrix where, for $i,j\in [m]$, we let
$$
A_n(i,j) = \left\{\begin{array}{cl} 1,\quad&\mbox{if $y_{i}\cap y_{j}\neq 0$;}\\
0,\quad&\mbox{otherwise.}\end{array}\right.
$$
Let $A_{n+1}$ be the $m'\times m'$ matrix where, for $i,j\in [m']$, we set
$$
A_{n+1}(i,j) = \left\{\begin{array}{cl} 1,\quad&\mbox{if $y_{i}\cap y_{j}\neq 0$;}\\
0,\quad&\mbox{otherwise.}\end{array}\right.
$$
We want to show that $A_{n+1}$ is invertible. By induction the rows of $A_n$ are linearly independent. Let $v_i$ denote the $i$th row of $A_{n+1}$ and $u_i$ the $i$th row of $A_n$. Then
$$
v_i =  \left\{\begin{array}{cl} {u_i}^\frown 0^\frown {u_i},\quad&\mbox{if $i\in [m]$;}\\
{0^{m}}^\frown 1^{m'-m},\quad&\mbox{if $i = m+1$;}\\
{u_{i-m-1}}^\frown 1^{m'-m},\quad&\mbox{otherwise.} \end{array}\right.
$$
That is 
$$
A_{n+1} = 
\left( \begin{matrix}
  A_n & (0^m)^T & A_n \\
  0^m & 1 & 1 \\
  A_n & 1 & 1
 \end{matrix}\right).
$$
Here, $(0^m)^T$ denotes the column vector of length $m$ containing only $0$'s. Now let $\lambda_i\in \scr{R}$ be such that $\sum_{i\in [m']}\lambda_i v_i = 0^{m'}$. Since
$$
\sum_{i\in [m]}\lambda_i u_i+\sum_{i=m+2}^{m'}\lambda_{i}u_{i-m-1} = \sum_{i\in [m]}(\lambda_i + \lambda_{i+m+1})u_i =  0^m,
$$
by the linear independence of the $u_i$, we must have
\begin{equation}\label{0909121}
(\forall i\in [m])(\lambda_{i+m+1} = -\lambda_i).
\end{equation} 
Considering the $(m+1)$th column of $A_{n+1}$, by (\ref{0909121}), we see that $\lambda_{m+1}-\sum_{i\in [m]}\lambda_{i} = 0$. We now have
\begin{eqnarray*}
0= \sum_{i\in [m']}\lambda_i v_i &=& \sum_{i=1}^m \lambda_i v_i + \lambda_{m+1}v_{m+1} + \sum_{i=m+2}^{m'} \lambda_i v_i\\
&=& \sum_{i=1}^m \lambda_i v_i + \sum_{i = 1}^m\lambda_{i} {0^m}^\frown 1^{m'-m}+ \sum_{i=1}^{m} \lambda_{i+m+1} v_{i+m+1}\\
&=& \sum_{i=1}^m \lambda_i u_i^\frown 0^\frown u_i + \sum_{i = 1}^m\lambda_{i} {0^m}^\frown 1^{m'-m} - \sum_{i=1}^{m} \lambda_{i} u_i^\frown 1^{m'-m}\\
&=& \sum_{i=1}^m \lambda_i {0^{m+1}}^\frown u_i + \sum_{i = 1}^m\lambda_{i} {0^m}^\frown 1^{m'-m} - \sum_{i=1}^{m} \lambda_{i} {0^m}^\frown 1^{m'-m}\\
&=& \sum_{i\in [m]}\lambda_i{0^{m+1}}^\frown u_i.
\end{eqnarray*}
By the linear independence of the $u_i$ and (\ref{0909121}), we may conclude that $\lambda_i = 0$ for each $i\neq m+1$. But then 
$$
\lambda_{m+1} {0^{m}}^\frown 1^{m'-m} = 0^{m'}
$$
and so we must have $\lambda_{m+1} = 0$, also. Thus the rows $\{v_i:i\in [m']\}$ are linearly independent and $A_{n+1}$ is invertible.\end{proof}

\section{Acknowledgements} The results presented here are taken from \cite{phd1} and accordingly the author wishes to thank his Ph.D. supervisor Professor Mirna D\v zamonja for her help. The author would also like to thank Professor Grzegorz Plebanek for his useful remarks, in particular the proof of Proposition \ref{gdelta} as it appears here is due to him.
\noindent
\bibliographystyle{alpha}
\bibliography{refs}
\end{document}